\newtheorem*{remark}{Remark}
\newtheorem*{claim}{Claim}
\newtheorem{Theorem}{Theorem}[section]
\newtheorem{Definition}[Theorem]{Definition}
\newtheorem{Lemma}[Theorem]{Lemma}
\newtheorem{Proposition}[Theorem]{Proposition}
\newtheorem{Example}[Theorem]{Example}
\newtheorem{fact}[Theorem]{Fact}
\numberwithin{equation}{section}
\newtheorem{theoremalph}{Theorem}
\begin{document}
   \newpage
   \title[Dimension theory of Diophantine approximation]{Dimension theory of Diophantine approximation related to $\beta$-transformations}
 
   \author{Wanlou Wu}
   \address[Wanlou Wu]{Department of Mathematics, Soochow University, Suzhou, 215006, China}
   \address{Laboratoire d'Analyse et de Math\'{e}matiques Appliqu\'{e}es, Universit\'{e} Paris-Est Cr\'{e}teil Val de Marne, Cr\'{e}teil, 94010, France}
   \email{wuwanlou@163.com}

   \author{Lixuan Zheng}
   \address[Lixuan Zheng]{Department of Mathematics, South China University of Technology, Guangzhou, 510000, China}
   \address{Laboratoire d'Analyse et de Math\'{e}matiques Appliqu\'{e}es, Universit\'{e} Paris-Est Cr\'{e}teil Val de Marne, Cr\'{e}teil, 94010, France}
   \email{jadezhenglixuan@gmail.com}
  
  

\begin{abstract}
   Let $T_\beta$ be the $\beta$-transformation on $[0,1)$ defined by $$T_\beta(x)=\beta x\text{ mod }1.$$ We study the Diophantine approximation of the orbit of a point $x$ under $T_\beta$. Precisely, for given two positive functions  $\psi_1,~\psi_2:\mathbb{N}\rightarrow\mathbb{R}^+$, define 
   $$\mathcal{L}(\psi_1):=\left\{x\in[0,1]:T_\beta^n x<\psi_1(n),\text{ for infinitely many $n\in\mathbb{N}$}\right\},$$ $$\mathcal{U}(\psi_2):=\left\{x\in [0,1]:\forall~N\gg1,~\exists~n\in[0,N],~s.t.~T^n_\beta x<\psi_2(N)\right\},$$ where $\gg$ means large enough. We compute the Hausdorff dimension of the set $\mathcal{L}(\psi_1)\cap\mathcal{U}(\psi_2)$. As a corollary, we estimate the Hausdorff dimension of the set $\mathcal{U}(\psi_2)$. 
\end{abstract}

\maketitle  	
	
\section{Introduction}
   Diophantine approximation, which originaly asks how closely can a given irrational number be approximated by a rational number $p/q$ with denominator $q$ no larger than a fixed positive integer $q_0$, has been widely studied by mathematicians. In 1842, Dirichlet \cite{D1842} proved the following theorem.\\
   {\bf Dirichlet Theorem} Given two real numbers $\theta,~Q$ with $Q\geq 1$, there is an integer $n$ with $1\leq n\leq Q$ such that $$\lVert n \theta\rVert<Q^{-1},$$ where $\lVert\xi\rVert$ denotes the distance from $\xi$ to the nearest integer. 
  
   Dirichlet Theorem is called a \emph{uniform approximation theorem} in \cite[pp.2]{W12}. A weak form of Dirichlet Theorem, called an \emph{asymptotic approximation theorem} in \cite[pp.2]{W12}, which was often refered to as a corollary of Dirichlet Theorem in the litterature has already existed in the book of Legendre \cite[1808, pp.18-19]{L2009} (using a continued fraction fact): for any real number $\theta$, there are infinitely many $n\in\mathbb{N}$ such that $$\lVert n\theta\rVert<n^{-1}.$$ For the general case, Khintchine in 1924 \cite{K1924} showed that for a positive function $\psi:\mathbb{N}\rightarrow\mathbb{R}^+$, if $x\mapsto x\psi(x)$ is non-increasing, then $$\mathcal{L}_\psi:=\left\{\theta\in\mathbb{R}:\lVert n\theta\rVert<\psi(n),\text{ for infinitely many }n\in\mathbb{N}\right\}$$ has Lebesgue measure zero if the series $\sum\psi(n)$ converges and has full Lebesgue measure otherwise. In the case where the set has Lebesgue measure zero, it is natural to calculate the Hausdorff dimension of $\mathcal{L}_\psi$. The first result on the Hausdorff dimension of $\mathcal{L}_\psi$ dates back to Jarn\'{\i}k-Bosicovitch Theorem \cite{B34,J29}. It was shown that the set $$\left\{\theta\in\mathbb{R}:\lVert n\theta\rVert<\dfrac{1}{n^\tau},\text{ for infinitely many }n\in\mathbb{N}\right\}$$ has Hausdorff deminsion $\dfrac{2}{1+\tau}$, for any $\tau>1$.       
   
   In analogy with the classical Diophantine approximation, Hill and Velani \cite{HV1995} studied the approximation properties of the orbits of a dynamical system and introduced the so called \emph{shrinking target problems}: for a measure preserving transformation $T:M\rightarrow M$ on a manifold $M$, what is the size (Lebesgue measure, Hausdorff dimension) of the set $$\left\{x\in M:T^nx\in B(n),\text{ for infinitely many $n\in\mathbb{N}$}\right\},$$ where $B(n)=B(x_0, r(n))$ is a ball centred at $x_0$ with radius $r(n)$($r(n)\rightarrow 0$)? They answered the case where $T$ is an expanding rational map of the Riemann sphere $\overline{\mathbb{C}}=\mathbb{C}\cup\{\infty\}$. 
   
   In this papper, we are interested in the approximation properties of the orbits of $\beta$-transformations. The $\beta$-transformation $T_\beta~(\beta>1)$ on $[0,1)$ is defined by $$T_\beta(x):=\beta x-\lfloor\beta x\rfloor,$$ where $\lfloor\cdot\rfloor$ is the integer part function. For any positive function  $\psi:\mathbb{N}\rightarrow\mathbb{R}^+$, define the set of \emph{$\psi$-well asymptotically approximable} points by $x_0$ as $$\mathcal{L}(\psi,x_0):=\{x\in[0,1]:\lvert T_\beta^n x-x_0\rvert<\psi(n),\text{ for infinitely many $n\in\mathbb{N}$}\}.$$ By \cite[Theorem 2A, B, C]{P67}, the set $\mathcal{L}(\psi,x_0)$ has Lebesgue measure zero if and only if the series $\sum\psi(n)$ converges. Shen and Wang \cite[Theorem 1.1]{SW2013} established the following result on the Hausdorff dimension of $\mathcal{L}(\psi,x_0)$. \\ \emph{{\bf Theorem SW}(\cite[Theorem 1.1]{SW2013}) For any real number $\beta>1$ and any point $x_0\in[0,1]$, one has $${\rm dim}_H \left(\mathcal{L}(\psi, x_0)\right)=\dfrac{1}{1+v},\quad\text{where $v:=\liminf\limits_{n\rightarrow\infty}\dfrac{-\log_\beta\psi(n)}{n}$}.$$}
   
   Parallel to the asymptotic approximation theorem, it is also worth of studying the uniform approximation properties as in Dirichlet Theorem. The uniform Diophantine approximation related to $\beta$-transformations was studied by Bugeaud and Liao \cite{YLiao2016}. For $x\in[0,1)$, let $$\nu_\beta(x):=\sup\left\{v\geq 0:T^n_\beta x<(\beta^n)^{-v},\text{ for infinitely many $n\in\mathbb{N}$}\right\},$$ $$\hat{\nu}_\beta(x):=\sup\left\{v\geq 0:\forall~N\gg 1,~T^n_\beta x<(\beta^N)^{-v}\text{ has a solution $n\in[0,N]$}\right\}.$$ Bugeaud and Liao \cite{YLiao2016} proved the following theorem.\\ \emph{{\bf Theorem BL} (\cite[Theorem 1.4]{YLiao2016}) For any $v\in(0,+\infty)$ and any $\hat{v}\in(0,1)$, if $v<\hat{v}/(1-\hat{v})$, then the set $$\left\{x\in[0,1]:\nu_\beta(x)= v\right\}\cap\left\{x\in[0,1]:\hat{\nu}_\beta(x)\geq \hat{v}\right\}$$ is empty. Otherwise, $${\rm dim}_H \left(\{x\in[0,1]:\nu_\beta(x)= v\}\cap\{x\in[0,1]:\hat{\nu}_\beta(x)=\hat{v}\}\right)=\dfrac{v-\hat{v}-v\hat{v}}{(1+v)(v-\hat{v})}.$$}The exponents $\nu_\beta$ and $\hat{\nu}_\beta$ were introduced in \cite{AB10}(see also \cite[Ch.7]{B2012}). They are strongly related to the run-length function of $\beta$-expansions (see \cite{Z18}). The aim of this paper is to study the Diophantine approximation sets in \cite{YLiao2016} when the approximation speed function $n\mapsto\beta^{-nv}$ is replaced by a general positive function. More precisely, fix two positive functions $\psi_1,~\psi_2:\mathbb{N}\rightarrow\mathbb{R}^+$, and define  $$\mathcal{L}(\psi_1):=\left\{x\in [0,1]:T_\beta^n x<\psi_1(n),\text{ for infinitely many $n\in\mathbb{N}$}\right\},$$ $$\mathcal{U}(\psi_2):=\left\{x\in [0,1]:\forall~N\gg 1,~T^n_\beta x<\psi_2(N)\text{ has a solution $n\in[0,N]$}\right\}.$$ We will estimate the Hausdorff dimension of the sets $\mathcal{L}(\psi_1)\cap\mathcal{U}(\psi_2)$ and $\mathcal{U}(\psi_2)$. Let $$\underline{v}_1:=\liminf_{n\rightarrow\infty}\dfrac{-\log_\beta\psi_1(n)}{n},\qquad\overline{v}_1:=\limsup_{n\rightarrow\infty}\dfrac{-\log_\beta\psi_1(n)}{n};$$ $$ \underline{v}_2:=\liminf_{n\rightarrow\infty}\dfrac{-\log_\beta\psi_2(n)}{n},\qquad\overline{v}_2:=\limsup_{n\rightarrow\infty}\dfrac{-\log_\beta\psi_2(n)}{n}.$$ If $\underline{v}_1<0$, by the definition of $\underline{v}_1$, there is a sequence $\{n_j\}$ such that $$\lim_{j\rightarrow\infty}\dfrac{-\log_\beta\psi_1(n_j)}{n_j}=\underline{v}_1.$$ Then, for $\varepsilon>0$ small enough, there exists an integer $j_0$ such that $$1<\beta^{-n_j(\underline{v}_1+\varepsilon)}\leq\psi_1(n_j),~\text{for any $j\geq j_0$}.$$ By the fact $T^n_\beta x<1$, for any $x\in[0,1)$ and any $n\in\mathbb{N}$, for any $x\in[0,1)$, we have $T^{n_j}_\beta x<1<\psi_1(n_j)$. This implies $$[0,1)\subseteq\mathcal{L}(\psi_1).$$ On the other hand, if we take all the integers $n_i$ with the following property $$\psi_2(n_i)>1,~\text{for $i=1,2,3\cdots$}$$ then for any $x\in[0,1)$ and any integer $n\in[1,n_i]$, we have $T^n_\beta x<1<\psi_2(n_i)$. Thus, we can replace $\psi_2(n)$ by the function    
\begin{equation*}
   \widetilde{\psi}_2(n)=
   \begin{cases}
   \psi_2(n), &\text{ if $n\neq n_i$}\\
   1, &\text{ if $n=n_i$}
   \end{cases},~i=1,2,\cdots
\end{equation*} 
   The size (Lebesgue measure, Hausdorff dimension) of the sets $\mathcal{L}(\psi_1)\cap\mathcal{U}(\psi_2)$ and $\mathcal{U}(\psi_2)$ are the same as that of the sets $\mathcal{L}(\psi_1)\cap\mathcal{U}(\widetilde{\psi}_2)$ and $\mathcal{U}(\widetilde{\psi}_2)$. Therefore, in this paper, we always assume $\underline{v}_1\geq0$ and $\underline{v}_2\geq0$. We establish the following theorems on the Hausdorff dimension of the set $\mathcal{L}(\psi_1)\cap\mathcal{U}(\psi_2)$.     
\begin{theoremalph}\label{Special-case}
\begin{enumerate}[$(1)$]
   \item If $\underline{v}_1=\overline{v}_1=\underline{v}_2=\overline{v}_2=0$,  then $${\rm dim}_H \left(\mathcal{L}(\psi_1)\cap\mathcal{U}(\psi_2)\right)=1;$$
   \item If $\underline{v}_2=\infty$ and $0\leq\underline{v}_1\leq\overline{v}_1\leq\infty$, then $\mathcal{L}(\psi_1)\cap\mathcal{U}(\psi_2)$ is countable. 
   \item If $\underline{v}_1=\infty$ and $0\leq\underline{v}_2\leq\overline{v}_2\leq\infty$, then ${\rm dim}_H \left(\mathcal{L}(\psi_1)\cap\mathcal{U}(\psi_2)\right)=0$. 
\end{enumerate}	  
\end{theoremalph}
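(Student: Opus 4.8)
The plan is to treat the three degenerate regimes separately, in each case converting the orbit condition $T^n_\beta x<\psi_i$ into the combinatorial statement that the $\beta$-expansion of $x$ carries a long block of zeros. Indeed, writing $T^n_\beta x=\sum_{k\ge1}\varepsilon_{n+k}\beta^{-k}$ with $\varepsilon_{n+k}$ the shifted greedy digits of $x$, the inequality $T^n_\beta x<\beta^{-m}$ forces $\varepsilon_{n+1}=\cdots=\varepsilon_{n+m}=0$: a nonzero digit in a position $n+j$ with $j\le m$ would give $T^n_\beta x\ge\beta^{-j}\ge\beta^{-m}$. I will use this dictionary throughout, together with Theorem SW (taken at $x_0=0$, where $\mathcal{L}(\psi,0)=\mathcal{L}(\psi)$) and Theorem BL.

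For part $(3)$ I would argue by monotonicity. Since $\underline{v}_1=\infty$, for every $M>0$ one has $\psi_1(n)<\beta^{-Mn}$ for all large $n$, so $\mathcal{L}(\psi_1)\subseteq\mathcal{L}(\phi_M)$ with $\phi_M(n)=\beta^{-Mn}$; Theorem SW applied to $\phi_M$ (whose exponent is $M$) gives ${\rm dim}_H\mathcal{L}(\phi_M)=\tfrac{1}{1+M}$, whence ${\rm dim}_H\big(\mathcal{L}(\psi_1)\cap\mathcal{U}(\psi_2)\big)\le{\rm dim}_H\mathcal{L}(\psi_1)\le\tfrac{1}{1+M}$ for every $M$, forcing the dimension to be $0$. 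For part $(2)$ I would show that $\mathcal{U}(\psi_2)$ is already countable. Fix $M\ge2$. As $\underline{v}_2=\infty$, for all large $N$ we have $\psi_2(N)<\beta^{-MN}$, so the defining property of $\mathcal{U}(\psi_2)$ yields some $n=n(N)\in[0,N]$ with $T^{n}_\beta x<\beta^{-MN}$; by the dictionary this produces at least $\lfloor MN\rfloor$ consecutive zero digits from position $n+1\le N+1$ on, so $\varepsilon_j=0$ for all $j\in[N+1,\lfloor MN\rfloor]$. Letting $N$ range over all large integers, these intervals cover every sufficiently large index, so $x$ has a finite $\beta$-expansion; as there are only countably many such points, $\mathcal{U}(\psi_2)$, and a fortiori $\mathcal{L}(\psi_1)\cap\mathcal{U}(\psi_2)$, is countable.

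Part $(1)$ carries the real content, and here I would extract the lower bound from Theorem BL by sandwiching. The upper bound ${\rm dim}_H\le1$ is trivial. For the lower bound, fix a small $\delta>0$ and set $v=2\delta$, $\hat v=\delta$; the admissibility constraint $v\ge\hat v/(1-\hat v)$ of Theorem BL holds once $\delta\le1/2$. The key inclusion is $\{x:\nu_\beta(x)=2\delta\}\cap\{x:\hat{\nu}_\beta(x)=\delta\}\subseteq\mathcal{L}(\psi_1)\cap\mathcal{U}(\psi_2)$: if $\nu_\beta(x)=2\delta$ then $T^n_\beta x<\beta^{-\delta n}$ for infinitely many $n$, and since $\overline{v}_1=0$ gives $\beta^{-\delta n}\le\psi_1(n)$ for all large $n$, we obtain $x\in\mathcal{L}(\psi_1)$; similarly $\hat{\nu}_\beta(x)=\delta$ yields, for all large $N$, a solution $n\in[0,N]$ of $T^n_\beta x<\beta^{-(\delta/2)N}$, and $\overline{v}_2=0$ gives $\beta^{-(\delta/2)N}\le\psi_2(N)$ for large $N$, so $x\in\mathcal{U}(\psi_2)$. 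Theorem BL then gives
\[
{\rm dim}_H\big(\mathcal{L}(\psi_1)\cap\mathcal{U}(\psi_2)\big)\ \ge\ \frac{2\delta-\delta-2\delta^2}{(1+2\delta)(2\delta-\delta)}=\frac{1-2\delta}{1+2\delta},
\]
and letting $\delta\to0^+$ forces the dimension to equal $1$.

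I expect the main obstacle to be the bookkeeping in part $(1)$: one must check that the two one-sided exponent conditions cutting out the Bugeaud--Liao level set survive when the pure speeds $\beta^{-\delta n}$ and $\beta^{-(\delta/2)N}$ are replaced by the general $\psi_1,\psi_2$. This is precisely where the hypotheses $\overline{v}_1=\overline{v}_2=0$ enter, since they guarantee that $\psi_i$ eventually dominates every exponential of positive rate. The combinatorial dictionary of the first paragraph, together with the observation that the quantifier ``$\forall\,N\gg1$'' tolerates finitely many exceptional $N$, is what makes these inclusions routine rather than delicate.
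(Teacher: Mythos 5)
Your proposal is correct and follows essentially the same route as the paper: part (3) via Theorem SW, part (2) by showing every point of $\mathcal{U}(\psi_2)$ has an eventually-zero $\beta$-expansion (the paper packages this digit-block argument as its Lemma 3.4 characterizing $\{x:\hat{\nu}_\beta(x)=\infty\}$), and part (1) by squeezing a Bugeaud--Liao level set with small exponents inside $\mathcal{L}(\psi_1)\cap\mathcal{U}(\psi_2)$ and letting the exponents tend to $0$. The only cosmetic difference is that in part (1) you invoke Theorem BL for the joint level set $\{\nu_\beta=2\delta\}\cap\{\hat{\nu}_\beta=\delta\}$ whereas the paper cites \cite[Theorem 1.5]{YLiao2016} for the single level set $\{\hat{\nu}_\beta=1/m\}$; both yield the same limit $1$.
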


\begin{remark}
	For Item (1), the set $\mathcal{L}(\psi_1)\cap\mathcal{U}(\psi_2)$ is not necessary of full Lebesgue measure. In fact, if the series $\sum\psi_1(n)$ converges, by \cite[Theorem 2A, B, C]{P67}, $$m\left(\mathcal{L}(\psi_1)\cap\mathcal{U}(\psi_2)\right)=0,$$ where $m(A)$ denotes the Lebesgue measure of $A$. However, the set $\mathcal{L}(\psi_1)\cap\mathcal{U}(\psi_2)$ can also be of full Lebesgue measure. For example, if $\psi_1(n)=\psi_2(n)=1/n$, according to Dmitry, Konstantoulas, and Florian \cite[Theorem 1.1]{KKR19}, $$m\left(\mathcal{L}(\psi_1)\cap\mathcal{U}(\psi_2)\right)=1.$$ 
	
	For Item $(3)$, if $\underline{v}_2=\infty$, then $\mathcal{L}(\psi_1)\cap\mathcal{U}(\psi_2)$ is countable. If $1<\underline{v}_2<\infty$, then $\mathcal{L}(\psi_1)\cap\mathcal{U}(\psi_2)$ is empty (see Lemma \ref{empty}). If $0<\underline{v}_2\leq1$, then $\mathcal{L}(\psi_1)\cap\mathcal{U}(\psi_2)$ is uncountable (see Proposition \ref{uncountable}).
\end{remark}
     
\begin{theoremalph}\label{Nonspecial}
   If $\underline{v}_2>1$, then $\mathcal{L}(\psi_1)\cap\mathcal{U}(\psi_2)$ is countable. If $\overline{v}_1/(2+\overline{v}_1)\leq\underline{v}_2\leq1<\overline{v}_2$, then $$0\leq{\rm dim}_H \left(\mathcal{L}(\psi_1)\cap\mathcal{U}(\psi_2)\right)\leq\min\left\{\dfrac{1}{1+\overline{v}_2},~\left(\dfrac{1-\underline{v}_2}{1+\underline{v}_2}\right)^2\right\}.$$ If $\underline{v}_1/(2+\underline{v}_1)<\underline{v}_2\leq\overline{v}_2\leq1$ and $\overline{v}_1/(2+\overline{v}_1)<\overline{v}_2$, then $$\left(\dfrac{1-\overline{v}_2}{1+\overline{v}_2}\right)^2\leq{\rm dim}_H \left(\mathcal{L}(\psi_1)\cap\mathcal{U}(\psi_2)\right)\leq\min\left\{\dfrac{1}{1+\overline{v}_2},~ \left(\dfrac{1-\underline{v}_2}{1+\underline{v}_2}\right)^2\right\}.$$ If $\underline{v}_2\leq\underline{v}_1/(2+\underline{v}_1)$ and $\overline{v}_1/(2+\overline{v}_1)<\overline{v}_2\leq1$, then $$\left(\dfrac{1-\overline{v}_2}{1+\overline{v}_2}\right)^2\leq{\rm dim}_H \left(\mathcal{L}(\psi_1)\cap\mathcal{U}(\psi_2)\right)\leq\dfrac{\underline{v}_1-\underline{v}_2-\underline{v}_1\cdot\underline{v}_2}{(1+\underline{v}_1)(\underline{v}_1-\underline{v}_2)}.$$ If $\underline{v}_1/(2+\underline{v}_1)<\underline{v}_2\leq\overline{v}_2\leq\overline{v}_1/(2+\overline{v}_1)$, then $$\dfrac{\overline{v}_1-\overline{v}_2-\overline{v}_1\cdot\overline{v}_2}{(1+\overline{v}_1)(\overline{v}_1-\overline{v}_2)}\leq{\rm dim}_H\left(\mathcal{L}(\psi_1)\cap\mathcal{U}(\psi_2)\right)\leq\min\left\{\dfrac{1}{1+\overline{v}_2},~\left(\dfrac{1-\underline{v}_2}{1+\underline{v}_2}\right)^2\right\}.$$ If $\underline{v}_2\leq\underline{v}_1/(2+\underline{v}_1)$ and $\overline{v}_2\leq\overline{v}_1/(2+\overline{v}_1)$, then $$\dfrac{\overline{v}_1-\overline{v}_2-\overline{v}_1\cdot\overline{v}_2}{(1+\overline{v}_1)(\overline{v}_1-\overline{v}_2)}\leq{\rm dim}_H\left(\mathcal{L}(\psi_1)\cap\mathcal{U}(\psi_2)\right)\leq\dfrac{\underline{v}_1-\underline{v}_2-\underline{v}_1\cdot\underline{v}_2}{(1+\underline{v}_1)(\underline{v}_1-\underline{v}_2)}.$$
\end{theoremalph}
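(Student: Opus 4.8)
The plan is to reduce the theorem to \emph{Theorem BL} by trapping $\mathcal{L}(\psi_1)\cap\mathcal{U}(\psi_2)$ between super-level sets of the exponents $\nu_\beta$ and $\hat{\nu}_\beta$, and then optimising the Bugeaud--Liao dimension formula over the admissible range of exponents dictated by $\underline{v}_i,\overline{v}_i$. Recall that $\nu_\beta(x)=\limsup_{n}(-\log_\beta T_\beta^n x)/n$. Comparing $\psi_i(n)$ with $\beta^{-nv}$ for $v$ slightly below $\underline{v}_i$ (respectively slightly above $\overline{v}_i$) and manipulating the $\liminf/\limsup$ yields
$$\{\nu_\beta>\overline{v}_1\}\cap\{\hat{\nu}_\beta>\overline{v}_2\}\subseteq\mathcal{L}(\psi_1)\cap\mathcal{U}(\psi_2)\subseteq\{\nu_\beta\geq\underline{v}_1\}\cap\{\hat{\nu}_\beta\geq\underline{v}_2\}.$$
Moreover, inspecting the uniform condition at scales $N$ with $(-\log_\beta\psi_2(N))/N\to\overline{v}_2$ shows that, apart from the countable set of points eventually mapped to $0$, one has $\mathcal{U}(\psi_2)\subseteq\{\nu_\beta\geq\overline{v}_2\}$, whence also $\mathcal{L}(\psi_1)\cap\mathcal{U}(\psi_2)\subseteq\{\nu_\beta\geq\overline{v}_2\}$ up to a countable set.

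\emph{The optimisation.} For admissible $(v',\hat{v}')$ (i.e.\ $\hat{v}'<1$ and $v'\geq\hat{v}'/(1-\hat{v}')$), write $f(v',\hat{v}')=\frac{v'-\hat{v}'-v'\hat{v}'}{(1+v')(v'-\hat{v}')}$ for the Bugeaud--Liao dimension of $\{\nu_\beta=v'\}\cap\{\hat{\nu}_\beta=\hat{v}'\}$. A direct computation shows that $f$ is strictly decreasing in $\hat{v}'$ and, for fixed $\hat{v}'$, unimodal in $v'$ with maximum $\left(\frac{1-\hat{v}'}{1+\hat{v}'}\right)^2$ attained at $v'=2\hat{v}'/(1-\hat{v}')$; moreover $2\hat{v}'/(1-\hat{v}')\geq v\Leftrightarrow\hat{v}'\geq v/(2+v)$. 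Hence
$$\sup_{v'\geq v,\ \hat{v}'\geq\hat{v}}f(v',\hat{v}')=\begin{cases}\left(\dfrac{1-\hat{v}}{1+\hat{v}}\right)^2,&\text{if }\hat{v}\geq v/(2+v),\\ f(v,\hat{v}),&\text{if }\hat{v}<v/(2+v).\end{cases}$$

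\emph{Upper and lower bounds.} For the lower bound it suffices to place a single Bugeaud--Liao level set inside $\{\nu_\beta>\overline{v}_1\}\cap\{\hat{\nu}_\beta>\overline{v}_2\}$: choosing admissible $(v',\hat{v}')$ with $v'>\overline{v}_1$, $\hat{v}'>\overline{v}_2$ approaching the maximiser of the optimisation (with $v=\overline{v}_1,\hat{v}=\overline{v}_2$) and using continuity of $f$ gives ${\rm dim}_H(\mathcal{L}(\psi_1)\cap\mathcal{U}(\psi_2))\geq\sup_{v'\geq\overline{v}_1,\ \hat{v}'\geq\overline{v}_2}f$, which equals $\left(\frac{1-\overline{v}_2}{1+\overline{v}_2}\right)^2$ when $\overline{v}_2>\overline{v}_1/(2+\overline{v}_1)$ and $f(\overline{v}_1,\overline{v}_2)$ otherwise (and is the trivial bound $0$ when $\overline{v}_2>1$, since then the lower set is countable). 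For the upper bound, \emph{Theorem SW} gives ${\rm dim}_H\{\nu_\beta\geq\overline{v}_2\}=1/(1+\overline{v}_2)$, while the super-level set $\{\nu_\beta\geq\underline{v}_1\}\cap\{\hat{\nu}_\beta\geq\underline{v}_2\}$ has dimension $\sup_{v'\geq\underline{v}_1,\ \hat{v}'\geq\underline{v}_2}f$ by the optimisation; taking the minimum of these two and substituting the hypotheses of each case reproduces every stated bound. Finally, when $\underline{v}_2>1$ the inclusion $\mathcal{U}(\psi_2)\subseteq\{\hat{\nu}_\beta\geq\underline{v}_2\}$ together with Lemma~\ref{empty} forces $\mathcal{L}(\psi_1)\cap\mathcal{U}(\psi_2)$ to be countable.

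\emph{Main obstacle.} The delicate point is the \emph{upper} bound ${\rm dim}_H\big(\{\nu_\beta\geq\underline{v}_1\}\cap\{\hat{\nu}_\beta\geq\underline{v}_2\}\big)\leq\sup f$: this set is an \emph{uncountable} union of Bugeaud--Liao level sets, and Hausdorff dimension is not stable under uncountable unions, so the formula of the optimisation cannot simply be read off from \emph{Theorem BL}. One must build an explicit cover adapted to the joint $\limsup$/uniform structure---slicing the range of $(\nu_\beta,\hat{\nu}_\beta)$ into finitely many strips of width $\delta$, covering each strip by the cylinders arising from the $\beta$-expansion combinatorics underlying the proof of \emph{Theorem BL}, estimating the resulting $s$-dimensional sum, and letting $\delta\to0$. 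The lower bound, by contrast, needs only one level set and is immediate. (The same covering, applied to the single constraint $\{\nu_\beta\geq\overline{v}_2\}$, reconfirms the $1/(1+\overline{v}_2)$ estimate without recourse to the exceptional countable set.)
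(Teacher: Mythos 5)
Your proposal is correct in substance and its two halves relate to the paper differently. For the \emph{lower} bound you take a genuinely shorter route: you embed a single Bugeaud--Liao level set $\{\nu_\beta=v'\}\cap\{\hat{\nu}_\beta=\hat{v}'\}$ (with $v'>\overline{v}_1$, $\hat{v}'>\overline{v}_2$ admissible) into $\mathcal{L}(\psi_1)\cap\mathcal{U}(\psi_2)$ via the sandwich inclusion (Lemma \ref{SET}(1)) and quote \textbf{Theorem BL} verbatim, then optimise $f$ by continuity; this is legitimate and avoids the paper's Proposition \ref{Thm3.3}, which instead rebuilds the Cantor set $E_\delta$ and a Bernoulli measure from scratch and applies the mass distribution principle (the paper's version is self-contained and verifies $E_\delta\subseteq\mathcal{L}(\psi_1)\cap\mathcal{U}(\psi_2)$ directly against $\psi_1,\psi_2$, but buys nothing you cannot get from citing BL plus Lemma \ref{SET}). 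Your optimisation of $f(v',\hat{v}')$ (monotone decreasing in $\hat{v}'$, unimodal in $v'$ with peak $\left(\frac{1-\hat{v}'}{1+\hat{v}'}\right)^2$ at $v'=2\hat{v}'/(1-\hat{v}')$, and the threshold $\hat{v}'\gtrless v/(2+v)$) checks out and reproduces every case of the statement. For the \emph{upper} bound your route is essentially the paper's: the $1/(1+\overline{v}_2)$ bound via $\mathcal{U}(\psi_2)\subseteq\{T_\beta^n x<\beta^{-n(\overline{v}_2-\varepsilon)}\ \text{i.o.}\}$ and \textbf{Theorem SW} is Proposition \ref{Thm3.1}, and the strip-slicing-plus-covering argument you describe for $\{\nu_\beta\geq\underline{v}_1\}\cap\{\hat{\nu}_\beta\geq\underline{v}_2\}$ is exactly Proposition \ref{Thm3.2} (the paper slices $\nu_\beta$ into countably many --- not finitely many, since the range is unbounded, though this changes nothing for countable stability --- strips of width $1/L$, covers each strip by cylinders determined by the zero-block structure of the $\beta$-expansion, extracts the critical exponent $f(v_\beta,\underline{v}_2)+O(1/L)$, and lets $L\to\infty$). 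You correctly identify this covering computation as the crux and you correctly refuse to deduce it from Theorem BL by an uncountable union, but you only sketch it; a complete write-up would have to carry out the block-counting estimate (bounding $\sum l_i$ of the free blocks, counting choices via Theorem \ref{cardinality}, and summing the $s$-dimensional series) as the paper does. One further small point: the case $\underline{v}_2=1$ needs the separate observation that admissibility forces $v_\beta=\infty$ so the strip dimensions vanish, which the paper handles explicitly via \textbf{Theorem SW}.
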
   

   We remark that Theorems \ref{Special-case} and \ref{Nonspecial} give all the cases. We also estimate the Hausdorff dimension of $\mathcal{U}(\psi_2)$.
\begin{theoremalph}\label{Uniform}
   If $\underline{v}_2>1$, then $\mathcal{U}(\psi_2)$ is countable. If $\underline{v}_2\leq1<\overline{v}_2$, then $$0\leq{\rm dim}_H \left(\mathcal{U}(\psi_2)\right)\leq\min\left\{\dfrac{1}{1+\overline{v}_2},~\left(\dfrac{1-\underline{v}_2}{1+\underline{v}_2}\right)^2\right\}.$$ If $\overline{v}_2\leq1$, then $$\left(\dfrac{1-\overline{v}_2}{1+\overline{v}_2}\right)^2\leq{\rm dim}_H \left(\mathcal{U}(\psi_2)\right)\leq\min\left\{\dfrac{1}{1+\overline{v}_2},~\left(\dfrac{1-\underline{v}_2}{1+\underline{v}_2}\right)^2\right\}.$$ 
\end{theoremalph}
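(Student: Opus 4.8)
The plan is to obtain Theorem~\ref{Uniform} as a corollary of Theorems~\ref{Special-case} and \ref{Nonspecial} by trivialising the asymptotic datum. I would take $\psi_1\equiv 1$. Since $T_\beta^n x<1$ for every $x\in[0,1)$ and every $n\in\mathbb{N}$, this gives $\mathcal{L}(\psi_1)=[0,1)$ exactly as in the reduction recorded in the introduction, hence $\mathcal{U}(\psi_2)=\mathcal{L}(\psi_1)\cap\mathcal{U}(\psi_2)$ with $\underline{v}_1=\overline{v}_1=0$. It then suffices to read off, for each regime of $\underline{v}_2,\overline{v}_2$, the matching case of the two preceding theorems after substituting $\underline{v}_1=\overline{v}_1=0$; in particular $\underline{v}_1/(2+\underline{v}_1)=\overline{v}_1/(2+\overline{v}_1)=0$, so all the threshold conditions there collapse to sign conditions on $\underline{v}_2$ and $\overline{v}_2$.

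Carrying this out: if $\underline{v}_2>1$, the first assertion of Theorem~\ref{Nonspecial} gives that $\mathcal{U}(\psi_2)$ is countable. If $\underline{v}_2\le 1<\overline{v}_2$, then $\overline{v}_1/(2+\overline{v}_1)=0\le\underline{v}_2$, so the second case of Theorem~\ref{Nonspecial} applies and yields the upper bound $\min\{1/(1+\overline{v}_2),\,((1-\underline{v}_2)/(1+\underline{v}_2))^2\}$, the lower bound $0$ being trivial. If $\overline{v}_2\le 1$ and $\underline{v}_2>0$, then $0<\underline{v}_2\le\overline{v}_2\le 1$, so the third case of Theorem~\ref{Nonspecial} gives precisely the two-sided bound claimed in Theorem~\ref{Uniform}. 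If instead $\underline{v}_2=\overline{v}_2=0$, then $\underline{v}_1=\overline{v}_1=\underline{v}_2=\overline{v}_2=0$ and Theorem~\ref{Special-case}(1) gives $\dim_H\mathcal{U}(\psi_2)=1$, consistent with the stated bounds since $((1-\overline{v}_2)/(1+\overline{v}_2))^2=1/(1+\overline{v}_2)=1$ there (the remaining, fifth and sixth, cases of Theorem~\ref{Nonspecial} require $\overline{v}_2\le\overline{v}_1/(2+\overline{v}_1)=0$ and so engage only through $\underline{v}_2=\overline{v}_2=0$).

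The one regime that does not transfer verbatim is $\underline{v}_2=0<\overline{v}_2\le 1$, and I expect this to be the only genuine obstacle. Here the relevant case of Theorem~\ref{Nonspecial} is the fourth one (its hypotheses $\underline{v}_2\le\underline{v}_1/(2+\underline{v}_1)=0$ and $0<\overline{v}_2\le 1$ hold), which supplies the lower bound $((1-\overline{v}_2)/(1+\overline{v}_2))^2$ unchanged; but its upper bound $\frac{\underline{v}_1-\underline{v}_2-\underline{v}_1\underline{v}_2}{(1+\underline{v}_1)(\underline{v}_1-\underline{v}_2)}$ degenerates to the indeterminate form $0/0$ when $\underline{v}_1=\underline{v}_2=0$, so it must be replaced by a direct covering estimate. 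I would resolve this by covering $\mathcal{U}(\psi_2)$ along a subsequence realising $\overline{v}_2$: choose $N_k\to\infty$ with $-\log_\beta\psi_2(N_k)/N_k\to\overline{v}_2$, so that by the definition of $\mathcal{U}(\psi_2)$, for all large $k$,
\[ \mathcal{U}(\psi_2)\subseteq\bigcup_{n=0}^{N_k}\{x\in[0,1):T_\beta^n x<\psi_2(N_k)\}. \]
By the piecewise-linear structure of $T_\beta$, each set in the union is covered by at most $C\beta^{n}$ intervals of length at most $\beta^{-n}\psi_2(N_k)$ (one inside each order-$n$ cylinder, using the standard cylinder-length estimates for $\beta$-expansions). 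Hence for $s>1/(1+\overline{v}_2)$ the associated $s$-cost is at most $C'\psi_2(N_k)^s\sum_{n=0}^{N_k}\beta^{n(1-s)}\asymp\beta^{N_k[\,1-s(1+\overline{v}_2)+o(1)\,]}\to 0$, while the covering intervals have diameter at most $\psi_2(N_k)\to 0$. Thus $\mathcal{H}^s(\mathcal{U}(\psi_2))=0$, giving $\dim_H\mathcal{U}(\psi_2)\le 1/(1+\overline{v}_2)=\min\{1/(1+\overline{v}_2),\,((1-\underline{v}_2)/(1+\underline{v}_2))^2\}$, as required. This covering in fact re-proves the $1/(1+\overline{v}_2)$ half of the upper bound uniformly across all non-countable regimes, so one may alternatively isolate it as a lemma and invoke Theorem~\ref{Nonspecial} only for the complementary bound $((1-\underline{v}_2)/(1+\underline{v}_2))^2$ when $\underline{v}_2>0$.
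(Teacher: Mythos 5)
Your proposal is correct, but it is organised differently from the paper's own proof, so a comparison is worthwhile. The paper does not specialise $\psi_1\equiv1$; instead it reruns the machinery of Propositions \ref{Thm3.1}, \ref{Thm3.2} and \ref{Thm3.3} with the constraints $v_\beta\geq\underline{v}_1$ and $v_\beta\geq\overline{v}_1$ simply dropped, so that the optimisation over $v_\beta$ is always unconstrained and the extrema $v_\beta=2\underline{v}_2/(1-\underline{v}_2)$ and $v_\beta=2\overline{v}_2/(1-\overline{v}_2)$ are always attained; the $\frac{1}{1+\overline{v}_2}$ half of the upper bound is taken from Proposition \ref{Thm3.1}, which embeds $\mathcal{U}(\psi_2)$ into an asymptotic approximation set along a subsequence realising $\overline{v}_2$ and quotes Theorem SW. Your black-box specialisation of Theorem \ref{Nonspecial} at $\underline{v}_1=\overline{v}_1=0$ (so that all thresholds $\underline{v}_1/(2+\underline{v}_1)=\overline{v}_1/(2+\overline{v}_1)=0$ collapse) achieves the same end more economically in every regime except $\underline{v}_2=0<\overline{v}_2\leq1$, which you correctly isolate as the one place where the inherited upper bound degenerates to $0/0$. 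Your patch there is sound: the cylinder count $\sharp\Sigma^n_\beta\leq\beta^{n+1}/(\beta-1)$ together with the fact that $T_\beta^n$ is affine with slope $\beta^n$ on each order-$n$ cylinder gives at most $C\beta^n$ intervals of length $\beta^{-n}\psi_2(N_k)$, and the resulting $s$-cost indeed vanishes for $s>1/(1+\overline{v}_2)$; this is in substance a self-contained re-proof of Proposition \ref{Thm3.1} with the appeal to Theorem SW replaced by an explicit covering, and, as you observe, it delivers the $\frac{1}{1+\overline{v}_2}$ bound uniformly. The only points you should make explicit in a write-up are (i) that the inclusion $\mathcal{U}(\psi_2)\subseteq\bigcup_{n=0}^{N_k}\{x:T_\beta^nx<\psi_2(N_k)\}$ is valid only on the sets $\mathcal{U}_M:=\{x:\forall N\geq M,\ \exists n\in[0,N],\ T_\beta^nx<\psi_2(N)\}$ with $N_k\geq M$, so one decomposes $\mathcal{U}(\psi_2)=\bigcup_{M}\mathcal{U}_M$ and uses countable stability of Hausdorff dimension, and (ii) that the case $\underline{v}_2=\overline{v}_2=0$ is covered by Theorem \ref{Special-case}(1), as you note. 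Neither affects the correctness of the argument.
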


   We will show in Examples \ref{Exa1}, \ref{Exa2}, \ref{Exa3}, \ref{Exa4}, \ref{Exa5}, \ref{Exa6} and \ref{Exa7}, that the upper and lower bound of the Hausdorff dimension in Theorems \ref{Nonspecial} and \ref{Uniform} can be all reached. When $\underline{v}_1=\overline{v}_1=0$, we have the result as Theorem \ref{Belong}.
\begin{theoremalph}\label{Belong}
   Assume $\underline{v}_1=\overline{v}_1=0$. If $\underline{v}_2>0$, then $\mathcal{U}(\psi_2)\subseteq \mathcal{L}(\psi_1)$. 
\end{theoremalph}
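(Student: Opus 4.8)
The plan is to establish the inclusion pointwise: fix an arbitrary $x\in\mathcal{U}(\psi_2)$ and show that $T_\beta^n x<\psi_1(n)$ for infinitely many $n$. The argument is a comparison of exponential decay rates, so the first step is to encode the hypotheses as inequalities. Since $\underline{v}_2>0$, I would fix constants $0<\delta<\eta<\underline{v}_2$. From $\underline{v}_2>\eta$ there is an $N_0$ with $\psi_2(N)<\beta^{-\eta N}$ for all $N\geq N_0$ (so in particular $\psi_2(N)\to0$), and from $\overline{v}_1=0<\delta$ there is an $m_0$ with $\psi_1(m)>\beta^{-\delta m}$ for all $m\geq m_0$.

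Next I would unwind the definition of $\mathcal{U}(\psi_2)$: for every $N\geq N_0$ there is an integer $n(N)\in[0,N]$ with $T_\beta^{n(N)}x<\psi_2(N)$. The elementary but decisive observation is that, because $n(N)\leq N$ and $\delta<\eta$, whenever $N\geq N_0$ and $n(N)\geq m_0$ one has
$$T_\beta^{n(N)}x<\psi_2(N)<\beta^{-\eta N}\leq\beta^{-\eta n(N)}\leq\beta^{-\delta n(N)}<\psi_1(n(N)).$$
Thus $T_\beta^{n(N)}x<\psi_1(n(N))$ at every index $n(N)$ that is itself large, the constraint $n(N)\le N$ built into $\mathcal{U}(\psi_2)$ being precisely what lets the faster rate $\eta$ at scale $N$ dominate the slower rate $\delta$ at scale $n(N)$.

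I would then split into two cases according to whether the set of indices $\{n(N):N\geq N_0\}$ is unbounded. If it is unbounded, I can choose $N_1<N_2<\cdots$ along which $n(N_k)$ strictly increases to infinity; for all large $k$ both $N_k\geq N_0$ and $n(N_k)\geq m_0$, so the displayed chain gives $T_\beta^{n(N_k)}x<\psi_1(n(N_k))$ for infinitely many distinct indices $n(N_k)$, whence $x\in\mathcal{L}(\psi_1)$. If instead $\{n(N)\}$ is bounded, a pigeonhole argument yields a single index $n_0$ and a sequence $N_k\to\infty$ with $T_\beta^{n_0}x<\psi_2(N_k)$; since $\psi_2(N_k)\to0$ this forces $T_\beta^{n_0}x=0$, and then $T_\beta^n x=0<\psi_1(n)$ for every $n\geq n_0$ (using $T_\beta(0)=0$ and the positivity of $\psi_1$), so again $x\in\mathcal{L}(\psi_1)$.

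I do not expect a serious obstacle here; the only point requiring care is the bounded case, where the inclusion holds for the degenerate reason that the orbit of $x$ terminates at $0$. Handling it is exactly where $\underline{v}_2>0$ is used, through $\psi_2(N)\to0$, whereas the exponential comparison driving the unbounded case relies only on $\overline{v}_1=0$ together with the inequality $n(N)\le N$.
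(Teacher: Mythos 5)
Your proof is correct and follows essentially the same route as the paper: both arguments rest on the comparison $\psi_2(N)<\beta^{-\eta N}\leq\beta^{-\eta n}\leq\beta^{-\delta n}<\psi_1(n)$ for $n\leq N$ large, made possible by $\overline{v}_1=0<\underline{v}_2$. The only difference is bookkeeping: where the paper delegates the extraction of infinitely many distinct hitting indices to ``the same argument as Proposition \ref{Pro}'' (a minimal-choice recursion using positivity of the orbit), you handle it self-containedly via the bounded/unbounded dichotomy on $\{n(N)\}$ with a pigeonhole step, which correctly isolates the degenerate case $T_\beta^{n_0}x=0$.
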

   
   Our paper is organized as follows. We recall some classical results of the theory of $\beta$-expansion in Section $2$. Theorems \ref{Special-case} and \ref{Nonspecial} are proved in Section $3$. Section $4$ establishes  Theorems \ref{Uniform} and \ref{Belong}. In Section $5$, we give examples to show that the estimations in Theorems \ref{Nonspecial} and \ref{Uniform} are sharp.  
   
\medskip  
\section{$\beta$-expansions}
   The notion of $\beta$-expansion was introduced by R\'{e}nyi \cite{Ren57} in 1957. For any $\beta>1$, the $\beta$-transformation $T_{\beta}$ on $[0,1)$ is defined by $$T_\beta x=\beta x-\lfloor\beta x\rfloor,$$ where $\lfloor\xi\rfloor$ denotes the largest integer less than or equal to $\xi$. Let
\begin{equation*} \lceil\beta\rfloor=
  \begin{cases}
  \beta -1, &  \text{if}~\beta \text{ is a positive integer},\\
  \lfloor \beta \rfloor, &  \text{otherwise}.
  \end{cases}
\end{equation*}
  
\begin{Definition}
   The $\beta$-expansion of a number $x\in[0,1)$ is the sequence $\{\varepsilon_n\}_{n\geq 1}:=\{\varepsilon_n(x,\beta)\}_{n\geq 1}$ of integers from $\{0,1,\cdots,\lceil\beta\rfloor\}$ such that
\begin{equation}\label{E1}
   x=\dfrac{\varepsilon_1}{\beta}+\dfrac{\varepsilon_2}{\beta^2}+\cdots+\dfrac{\varepsilon_n}{\beta^n}+\cdots,
\end{equation}
   where $$\varepsilon_1=\varepsilon_1(x,\beta)=\lfloor\beta x \rfloor,~\varepsilon_n=\varepsilon_n(x,\beta)=\lfloor\beta T^{n-1}_\beta x\rfloor,~\text{ for all $n\geq 2$}.$$ We also write $ d_\beta(x)=\left(\varepsilon_1,\cdots,\varepsilon_n,\cdots\right)$.
\end{Definition}
  
   We can extend the definition of the $\beta$-transformation to the point $1$ as:$$T_\beta 1=\beta-\lfloor\beta \rfloor.$$ One can obtain $$ 1=\dfrac{\varepsilon_1(1,\beta)}{\beta}+\dfrac{\varepsilon_2(1,\beta)}{\beta^2}+\cdots+\dfrac{\varepsilon_n(1,\beta)}{\beta^n}+\cdots,$$ where $\varepsilon_1(1,\beta)=\lfloor\beta\rfloor,~\varepsilon_n=\lfloor\beta T^{n-1}_\beta 1\rfloor,~\text{for all $n\geq 2$}$. We also write $$ d_\beta(1)=\left(\varepsilon_1(1,\beta),\cdots,\varepsilon_n(1,\beta),\cdots\right).$$ If $d_\beta(1)$ is finite, i.e., there is an integer $m>0$ such that $\varepsilon_m(1,\beta)\neq 0$ and $\varepsilon_i(1,\beta)=0$ for all $i>m$, then $\beta$ is called a \emph{simple Parry number}. In this case, the infinite $\beta$-expansion of $1$ is defined as: $$(\varepsilon^\ast_1(\beta),\varepsilon^\ast_2(\beta),\cdots,\varepsilon^\ast_n(\beta),\cdots):=(\varepsilon_1(1,\beta),\varepsilon_2(1,\beta),\cdots,\varepsilon_m(1,\beta)-1)^\infty,$$ where $(\omega)^\infty$ denotes the periodic sequence $(\omega, \omega,\cdots)$. If $d_\beta(1)$ is infinite, then we define $$(\varepsilon^\ast_1(\beta),\varepsilon^\ast_2(\beta),\cdots,\varepsilon^\ast_n(\beta),\cdots):=(\varepsilon_1(1,\beta),\varepsilon_2(1,\beta),\cdots,\varepsilon_n(1,\beta),\cdots).$$
   
   Endow the set $\{0,1,\cdots,\lceil\beta\rfloor\}^{\mathbb{N}}$ with the product topology and define the one-sided shift operator $\sigma$ as:$$\sigma\left((\omega_n)_{n\geq1}\right):=(\omega_{n+1})_{n\geq1},$$ for any infinite sequence $(\omega_n)_{n\geq1}$ in $\{0,1,\cdots,\lceil\beta\rfloor\}^{\mathbb{N}}$. The lexicographical order $<_{lex}$ on $\{0,1,\cdots,\lceil\beta\rfloor\}^{\mathbb{N}}$ is defined as: $$\omega=(\omega_1,\omega_2,\cdots)<_{lex}\omega'=(\omega'_1,\omega'_2,\cdots),$$ if $\omega_1<\omega'_1$ or if there is an integer $k\geq 2$ such that for all $1\leq i< k$, $\omega_i=\omega'_i$ but $\omega_k<\omega'_k$. Denote by $\omega\leq_{lex}\omega'$ if $\omega<_{lex}\omega'$ or $\omega=\omega'$.
  
\begin{Definition}
   A finite word $(\omega_1,\omega_2,\cdots,\omega_n)$ is called $\beta$-admissible, if there is $x\in[0,1]$ such that the $\beta$-expansion of $x$ begins with $(\omega_1,\omega_2,\cdots,\omega_n).$ An infinite sequence $(\omega_1,\omega_2,\cdots,\omega_n,\cdots)$ is called $\beta$-admissible, if there is $x\in[0,1]$ such that the $\beta$-expansion of $x$ is $(\omega_1,\omega_2,\cdots,\omega_n,\cdots).$
\end{Definition} 
  
   Denote by $\Sigma_\beta$ the set of all infinite $\beta$-admissible sequences and $\Sigma^n_\beta$ the set of all $\beta$-admissible sequences with length $n$. The $\beta$-admissible sequences are characterized by Parry \cite{P1960} and R\'{e}nyi \cite{Ren57}.
\begin{Theorem}\label{Admissible}
   Let $\beta>1$,
\begin{enumerate}[(1)]
  \item(\cite[Lemma 1]{P1960})
   A word $\omega=(\omega_n)_{n\geq 1}\in\Sigma_\beta$ if and only if $$\sigma^k(\omega)\leq_{lex}(\varepsilon^\ast_1(\beta),\varepsilon^\ast_2(\beta),\cdots,\varepsilon^\ast_n(\beta),\cdots),~\text{for all $k\geq 0$}.$$ 
  	
  \item(\cite[Lemma 3]{P1960}) For any $x_1,~x_2\in[0,1]$, $x_1<x_2$ if and only if $$d_\beta(x_1)<_{lex}d_\beta(x_2).$$
  	
  \item(\cite[Lemma 4]{P1960}) For any $\beta_2>\beta_1>1$, one has $$\Sigma^n_{\beta_1}\subseteq\Sigma^n_{\beta_2},\quad\Sigma_{\beta_1}\subseteq\Sigma_{\beta_2}.$$
\end{enumerate}	
\end{Theorem}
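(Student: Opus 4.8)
The plan is to treat these as the classical Parry and R\'enyi facts and to derive all three from a single mechanism: the greedy algorithm underlying $d_\beta$, encoded through the realization map $\omega\mapsto\sum_{n\geq1}\omega_n\beta^{-n}$, together with the quasi-greedy expansion $(\varepsilon^\ast_n(\beta))_{n\geq1}$. I would prove item (2) first, since the lexicographic monotonicity of $x\mapsto d_\beta(x)$ is the common ingredient behind items (1) and (3).

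For item (2), the key observation is that $T_\beta$ acts on each cylinder as an increasing affine map and that the shift $\sigma$ corresponds to $T_\beta$, i.e. $\sigma(d_\beta(x))=d_\beta(T_\beta x)$. Given $x_1<x_2$, I would compare digits inductively: since $\varepsilon_1(x,\beta)=\lfloor\beta x\rfloor$ is nondecreasing in $x$, either $\varepsilon_1(x_1,\beta)<\varepsilon_1(x_2,\beta)$, and we are done, or the first digits coincide, in which case $T_\beta x_1=\beta x_1-\varepsilon_1(x_1,\beta)<\beta x_2-\varepsilon_1(x_2,\beta)=T_\beta x_2$, and I iterate on the pair $(T_\beta x_1,T_\beta x_2)$. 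Either a first disagreement occurs, at which the $x_1$-digit is strictly smaller, giving $d_\beta(x_1)<_{lex}d_\beta(x_2)$; or all digits agree, forcing $x_1=x_2$ against the hypothesis. The converse follows by trichotomy on $\{x_1<x_2,\ x_1=x_2,\ x_1>x_2\}$ together with the injectivity of $d_\beta$.

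For item (1), necessity is the easy half: if $\omega=d_\beta(x)$ then $\sigma^k\omega=d_\beta(T_\beta^k x)$ with $T_\beta^k x\in[0,1)$, so I only need that every $y\in[0,1)$ satisfies $d_\beta(y)\leq_{lex}(\varepsilon^\ast_n(\beta))$, which I would get from item (2) after identifying $(\varepsilon^\ast_n(\beta))$ as the supremum in $<_{lex}$ of $\{d_\beta(y):y<1\}$; the two definitions in the simple-Parry and non-simple cases are arranged exactly so that this supremum dominates every such $d_\beta(y)$. Sufficiency is where I expect the real work: given $\omega$ with $\sigma^k\omega\leq_{lex}(\varepsilon^\ast_n(\beta))$ for all $k\geq0$, I set $x=\sum_{n\geq1}\omega_n\beta^{-n}$ and must show $x\in[0,1)$ and $d_\beta(x)=\omega$. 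The admissibility inequalities are precisely what guarantees that each tail $\sum_{n\geq1}\omega_{k+n}\beta^{-n}$ stays in $[0,1)$, so that the greedy extraction $\lfloor\beta T_\beta^{k-1}x\rfloor$ returns $\omega_k$; carrying out this digit-by-digit verification, and handling the boundary case where a tail meets its lexicographic bound, is the main obstacle.

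For item (3), I would reduce admissibility monotonicity to the single statement that $\beta\mapsto(\varepsilon^\ast_n(\beta))$ is nondecreasing in $<_{lex}$. Granting this, if $\omega\in\Sigma_{\beta_1}$ then all shifts obey $\sigma^k\omega\leq_{lex}(\varepsilon^\ast_n(\beta_1))\leq_{lex}(\varepsilon^\ast_n(\beta_2))$, so $\omega\in\Sigma_{\beta_2}$ by item (1), and the finite-length inclusion $\Sigma^n_{\beta_1}\subseteq\Sigma^n_{\beta_2}$ follows by restriction to words that extend to admissible sequences. The monotonicity of $(\varepsilon^\ast_n(\beta))$ I would prove by an undershoot argument: for $\beta_1<\beta_2$, feeding the base-$\beta_1$ digits of $1$ into base $\beta_2$ yields $\sum_n\varepsilon_n(1,\beta_1)\beta_2^{-n}<\sum_n\varepsilon_n(1,\beta_1)\beta_1^{-n}=1$, so representing $1$ in base $\beta_2$ forces a lexicographically larger sequence, whence $d_{\beta_1}(1)<_{lex}d_{\beta_2}(1)$ and the same comparison for the associated quasi-greedy expansions.
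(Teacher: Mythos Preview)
The paper does not prove this theorem at all: it is stated as a quotation of Parry's results, with each item carrying an explicit citation to \cite{P1960}, and no argument is supplied. So there is no ``paper's own proof'' to compare against; you have written a proof where the authors chose to cite one.

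Your sketch follows the classical line (essentially Parry's). Item~(2) is clean. For item~(1), necessity is fine; for sufficiency you correctly identify the crux---showing that each tail value $\sum_{n\ge1}\omega_{k+n}\beta^{-n}$ lies in $[0,1)$ so the greedy digit is indeed $\omega_{k+1}$---and you flag the boundary case where a shift equals $(\varepsilon^\ast_n(\beta))$, but you do not actually resolve it. That case is where the distinction between $d_\beta(1)$ and the quasi-greedy expansion $(\varepsilon^\ast_n(\beta))$ matters (for simple Parry numbers the two differ), and it is also where the statement's use of $\leq_{lex}$ rather than $<_{lex}$ must be reconciled with whether $\Sigma_\beta$ includes the expansion of $1$. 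This is a genuine, if standard, technical point you would need to close. For item~(3), your reduction to monotonicity of $\beta\mapsto(\varepsilon^\ast_n(\beta))$ and the undershoot argument are correct in spirit; just note that passing from $d_{\beta_1}(1)<_{lex}d_{\beta_2}(1)$ to the same inequality for the quasi-greedy expansions requires a short separate check in the simple-Parry case.
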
 

\begin{Theorem}\label{cardinality}(\cite[Theorem 2]{Ren57})
   For any $\beta>1$, one has $$\beta^n\leq\sharp \Sigma^n_\beta\leq\dfrac{\beta^{n+1}}{\beta-1},$$ where $\sharp$ denotes the cardinality of a finite set.
\end{Theorem}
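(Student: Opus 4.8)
The statement to prove is Theorem \ref{cardinality}: the bound $\beta^n \le \sharp\Sigma^n_\beta \le \frac{\beta^{n+1}}{\beta-1}$.

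\textbf{Overall approach.} The plan is to relate the number of admissible words of length $n$ to the covering of $[0,1)$ by the cylinder intervals they determine, and then to compare total length against individual length bounds. Each admissible word $(\omega_1,\dots,\omega_n)\in\Sigma^n_\beta$ corresponds to a nonempty cylinder set $I_n(\omega_1,\dots,\omega_n)=\{x\in[0,1): \varepsilon_i(x,\beta)=\omega_i,\ 1\le i\le n\}$, and these cylinders partition $[0,1)$ up to endpoints. On each cylinder the map $T_\beta^n$ acts as multiplication by $\beta^n$ (since $T_\beta$ multiplies by $\beta$ on each branch), so the length of any cylinder is at most $\beta^{-n}$, with length exactly $\beta^{-n}$ for a ``full'' cylinder (one mapped onto all of $[0,1)$ by $T_\beta^n$).

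\textbf{Lower bound.} First I would establish $\sharp\Sigma^n_\beta \ge \beta^n$. Since the cylinders $\{I_n(\omega):\omega\in\Sigma^n_\beta\}$ cover $[0,1)$ and each has length at most $\beta^{-n}$, summing lengths gives
\begin{equation*}
1 = \sum_{\omega\in\Sigma^n_\beta} |I_n(\omega)| \le \sharp\Sigma^n_\beta\cdot\beta^{-n},
\end{equation*}
whence $\sharp\Sigma^n_\beta \ge \beta^n$. The key fact feeding this is that $|I_n(\omega)|\le\beta^{-n}$, which follows because $T_\beta^n$ is injective and expanding by exactly $\beta^n$ on each cylinder, so $|I_n(\omega)|=\beta^{-n}|T_\beta^n(I_n(\omega))|\le\beta^{-n}$ as $T_\beta^n(I_n(\omega))\subseteq[0,1)$.

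\textbf{Upper bound.} For $\sharp\Sigma^n_\beta \le \frac{\beta^{n+1}}{\beta-1}$ I would argue via a recursive/multiplicative estimate on how admissibility can extend a word. By the Parry characterization (Theorem \ref{Admissible}(1)), a word is admissible iff every shift is lexicographically dominated by $(\varepsilon^\ast_k(\beta))_{k\ge1}$; the number of admissible successors of a given admissible word is controlled by the digits of $d_\beta(1)$. The standard route is to bound $\sharp\Sigma^n_\beta$ by comparing consecutive levels: one shows $\sharp\Sigma^{n+1}_\beta \le \beta\cdot\sharp\Sigma^n_\beta$ plus a boundary correction, or more directly, one covers $[0,1)$ from the other side using that the cylinders generated at level $n$ together with the ``admissibility defect'' give at most $\lfloor\beta\rfloor+1$ choices per digit while the constraint from $d_\beta(1)$ prunes this down. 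Summing the geometric contributions $\sum_{k=0}^{n}\lceil\beta\rfloor\beta^{k}$-type terms produces the closed form $\frac{\beta^{n+1}}{\beta-1}$.

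\textbf{Main obstacle.} The delicate point is the upper bound, because unlike the clean lower bound it does not follow purely from a volume argument—not every cylinder is full, so one cannot simply say each has length $\ge\beta^{-n}$. The honest difficulty is quantifying the ``defect'' caused by non-full cylinders at the right end of $[0,1)$ (those constrained by the admissibility condition relative to $d_\beta(1)$). I expect the cleanest path is R\'enyi's original counting: track the quantity $a_n:=\sharp\Sigma^n_\beta$ and derive the two-sided comparison $\beta^n\le a_n$ and $a_n\le\beta\,a_{n-1}+1$ (or the analogous inequality encoding that each admissible word has at most $\lceil\beta\rfloor+1$ extensions with one extension potentially blocked), then solve the recursion to get $a_n\le\frac{\beta^{n+1}-1}{\beta-1}\le\frac{\beta^{n+1}}{\beta-1}$. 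Since this is a cited classical result (\cite{Ren57}), I would present the volume argument for the lower bound in full and sketch the recursive digit-counting for the upper bound, referring to R\'enyi for the complete verification.
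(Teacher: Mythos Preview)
The paper does not supply its own proof of this statement; it is quoted verbatim as a classical result of R\'enyi and used as a black box. So there is no ``paper's approach'' to compare against beyond the citation itself.

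Your lower bound argument is correct and complete: the cylinders $I_n(\omega)$ partition $[0,1)$, each has Lebesgue length at most $\beta^{-n}$ because $T_\beta^n$ is a bijection from $I_n(\omega)$ onto a subinterval of $[0,1)$ with constant derivative $\beta^n$, and summing lengths gives $1\le\sharp\Sigma^n_\beta\cdot\beta^{-n}$.

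For the upper bound, however, your sketch has a real gap. The recursion you propose, $a_n\le\beta\,a_{n-1}+1$, is exactly the non-trivial step, and you do not justify it. The naive extension count gives each admissible word at most $\lfloor\beta\rfloor+1$ successors, and $\lfloor\beta\rfloor+1>\beta$ whenever $\beta\notin\mathbb{Z}$, so one cannot pass from $\lfloor\beta\rfloor+1$ to $\beta$ without using that \emph{non-full} cylinders have strictly fewer admissible extensions, and quantifying that defect is precisely the content of R\'enyi's argument. The phrase ``summing the geometric contributions $\sum_{k=0}^n\lceil\beta\rfloor\beta^k$-type terms'' does not by itself produce $\beta^{n+1}/(\beta-1)$ either, since $\lceil\beta\rfloor$ is an integer and the geometric sum with ratio $\beta$ would need the leading coefficient controlled. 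In short, you correctly identify where the difficulty lies but do not resolve it; your proposal for the upper bound is effectively the same as the paper's treatment, namely a pointer to \cite{Ren57}. If you want a self-contained argument, one clean route is to show that the number of \emph{non-full} cylinders of order $n$ is at most $n$ (each corresponds to a suffix matching an initial segment of $d_\beta(1)$), bound the full cylinders from below in length by $\beta^{-n}$, and combine; alternatively, establish the exact recursion $a_n=\sum_{j=1}^{n}\varepsilon^\ast_j(\beta)\,a_{n-j}+1$ and use $\sum_{j\ge1}\varepsilon^\ast_j(\beta)\beta^{-j}=1$.
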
 

   For every $(\omega_1,\cdots,\omega_n)\in\Sigma^n_\beta$, we call $$I_n(\omega_1,\cdots,\omega_n):=\{x\in[0,1]:d_\beta(x)\text{ starts with }(\omega_1,\cdots,\omega_n)\}$$ an \emph{$n$-th order basic interval} with respect to $\beta$. Denote by $I_n(x)$ the $n$-th order basic interval containing $x$. The basic intervals are also called \emph{cylinders} by some authors. It is crucial to estimate the lengths of the basic intervals. We will use the key notion of \textquotedblleft full basic interval\textquotedblright as follows (see \cite{LW2008, FW2012}).

\begin{Definition}
   For any $(\omega_1,\cdots,\omega_n)\in\Sigma^n_\beta$, a basic interval $I_n(\omega_1,\cdots,\omega_n)$ is said to be full if its length is $\beta^{-n}$.
\end{Definition}  

\begin{Proposition}\label{full}
   (\cite[Lemma 3.1]{FW2012} and \cite[Lemma 2.5]{SW2013})
	 
   For any $(\omega_1,\cdots,\omega_n)\in\Sigma^n_\beta$, the following statements are equivalent:
\begin{enumerate}[(1)]
   \item $I_n(\omega_1,\cdots,\omega_n)$ is a full basic interval;
   \item $T^n_\beta I_n(\omega_1,\cdots,\omega_n)=[0,1)$;
   \item For any $\omega'=(\omega'_1,\cdots,\omega'_m)\in\Sigma^m_\beta$, the concatenation $$(\omega_1,\cdots,\omega_n,\omega'_1,\cdots,\omega'_m)\in\Sigma^{n+m}_\beta, \text{ i.e., is $\beta$-admissible.}$$ 
\end{enumerate}
\end{Proposition}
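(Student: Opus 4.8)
The plan is to prove the chain of equivalences by first establishing $(1)\Leftrightarrow(2)$ through the affine structure of $T_\beta^n$ on a basic interval, and then $(2)\Leftrightarrow(3)$ through the admissibility characterization of Theorem \ref{Admissible}.

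First I would record the key computation that on $I_n(\omega_1,\ldots,\omega_n)$ the map $T_\beta^n$ is affine with slope $\beta^n$. Since every $x\in I_n(\omega_1,\ldots,\omega_n)$ satisfies $\lfloor\beta T_\beta^{k-1}x\rfloor=\omega_k$ for $1\le k\le n$, an immediate induction gives $T_\beta^n x=\beta^n x-\sum_{k=1}^n\omega_k\beta^{n-k}$ for all such $x$. Writing $I_n(\omega_1,\ldots,\omega_n)=[\ell,r)$, the left endpoint $\ell=\sum_{k=1}^n\omega_k\beta^{-k}$ is the value whose expansion is $(\omega_1,\ldots,\omega_n,0,0,\ldots)$, which is admissible because appending zeros can only decrease a sequence lexicographically, so the criterion of Theorem \ref{Admissible}(1) still applies; hence $T_\beta^n\ell=0$. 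Thus $T_\beta^n$ carries $[\ell,r)$ affinely onto $[0,\beta^n(r-\ell))=[0,\beta^n|I_n|)$. Because $T_\beta^n I_n\subseteq[0,1)$ always holds, we get $|I_n|\le\beta^{-n}$, and equality---that is, fullness---holds precisely when the image equals $[0,1)$. This yields $(1)\Leftrightarrow(2)$.

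Next I would treat $(2)\Leftrightarrow(3)$. For $(2)\Rightarrow(3)$, given an admissible $\omega'=(\omega'_1,\ldots,\omega'_m)$, I choose the left endpoint $z$ of $I_m(\omega')$, whose expansion begins with $\omega'$ and which lies in $[0,1)$; surjectivity of $T_\beta^n$ on $I_n$ produces $x\in I_n$ with $T_\beta^n x=z$. Since the digits of $x$ past position $n$ are exactly the digits of $T_\beta^n x=z$, the expansion of $x$ begins with the concatenation $(\omega_1,\ldots,\omega_n,\omega'_1,\ldots,\omega'_m)$, so this word is admissible. For $(3)\Rightarrow(2)$, I take any $y\in[0,1)$ with expansion $(\omega'_1,\omega'_2,\ldots)$; each prefix $(\omega'_1,\ldots,\omega'_m)$ is admissible, so by $(3)$ every finite concatenation $(\omega_1,\ldots,\omega_n,\omega'_1,\ldots,\omega'_m)$ is admissible, and passing to the limit via the lexicographic criterion of Theorem \ref{Admissible}(1) shows the infinite concatenation $(\omega_1,\ldots,\omega_n,\omega'_1,\omega'_2,\ldots)$ is admissible. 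The point $x$ it represents lies in $I_n$ and satisfies $T_\beta^n x=y$, so $y\in T_\beta^n I_n$; as $y$ was arbitrary, $(2)$ follows.

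I expect the main obstacle to be the bookkeeping at the boundary between the two regimes: verifying that $T_\beta^n$ really is a bijection of the half-open interval $I_n$ onto $[0,\beta^n|I_n|)$, so that preimages exist and the endpoints cause no trouble, and checking in $(3)\Rightarrow(2)$ that admissibility of all finite concatenations genuinely passes to the infinite concatenation. The latter is exactly where Parry's lexicographic characterization is indispensable: admissibility of an infinite sequence is equivalent to a shift-invariant lexicographic inequality, and for each shift the comparison against $(\varepsilon^\ast_1(\beta),\varepsilon^\ast_2(\beta),\ldots)$ is decided by finitely many coordinates, hence is inherited from a sufficiently long finite prefix.
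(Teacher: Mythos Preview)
Your proof is correct. The paper itself does not prove this proposition; it is quoted from \cite[Lemma~3.1]{FW2012} and \cite[Lemma~2.5]{SW2013} without argument, so there is no in-paper proof to compare against. Your approach---using the affine structure of $T_\beta^n$ on $I_n$ for $(1)\Leftrightarrow(2)$ and Parry's lexicographic criterion to pass from finite to infinite admissibility for $(3)\Rightarrow(2)$---is the standard one and matches what is done in the cited references.

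One remark on a point you flagged yourself: in $(3)\Rightarrow(2)$, the phrase ``decided by finitely many coordinates'' is slightly loose when $\sigma^k$ of the concatenation happens to coincide with $(\varepsilon^\ast_i(\beta))_i$ on an initial segment. The clean statement is that if $(\omega_{k+1},\ldots,\omega_n,\omega'_1,\ldots,\omega'_m)\le_{\mathrm{lex}}(\varepsilon^\ast_1(\beta),\ldots,\varepsilon^\ast_{n-k+m}(\beta))$ for every $m$, then the corresponding infinite inequality holds, since a violation of the infinite inequality would appear already at some finite index. You have the right idea; just phrase it as a contrapositive rather than as ``decided by finitely many coordinates''.
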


\begin{Proposition}\label{fullc}
	(\cite[Corollary 2.6]{SW2013})
\begin{enumerate}[(1)]
   \item If $(\omega_1,\cdots,\omega_{n+1})$ is a $\beta$-admissible sequence with $\omega_{n+1}\neq0$, then $$I_{n+1}(\omega_1,\cdots,\omega'_{n+1})$$ is full for any $0\leq \omega'_{n+1}<\omega_{n+1}$. 
   \item For every $\omega\in\Sigma^n_\beta$, if $I_n(\omega)$ is full, then for any $\omega'\in\Sigma^m_\beta$, one has $$\lvert I_{n+m}(\omega,\omega')\rvert=\lvert I_n(\omega)\rvert\cdot\lvert I_m(\omega')\rvert=\dfrac{\lvert I_m(\omega')\rvert}{\beta^n}.$$
   \item For any $\omega\in\Sigma^n_\beta$, if $I_{n+m}(\omega,\omega')$ is a full basic interval contained in $I_n(\omega)$ with the smallest order, then $$\lvert I_{n+m}(\omega,\omega')\rvert\geq\dfrac{\lvert I_n(\omega)\rvert}{\beta}.$$ 
\end{enumerate}
\end{Proposition}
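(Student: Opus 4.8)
The plan is to prove the three items separately, using Parry's admissibility criterion (Theorem \ref{Admissible}) together with the equivalent characterisations of full basic intervals in Proposition \ref{full}. The common tool is the observation that $T^n_\beta$ restricted to any basic interval $I_n(\omega)$ is affine with slope $\beta^n$ and sends the left endpoint of $I_n(\omega)$ to $0$: indeed the left endpoint equals $\sum_{i=1}^n\omega_i\beta^{-i}$, so $\beta^n$ times it is an integer. Consequently $T^n_\beta(I_n(\omega))=[0,t)$ for some $t=\beta^n\lvert I_n(\omega)\rvert\in(0,1]$, and by Proposition \ref{full} the interval $I_n(\omega)$ is full precisely when $t=1$.

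For Item (1) I would invoke the equivalence of statements (1) and (3) in Proposition \ref{full}: it suffices to show that for every admissible infinite sequence $\tau$ the concatenation $(\omega_1,\dots,\omega_n,\omega'_{n+1},\tau)$ is admissible, which I check through the lexicographic test of Theorem \ref{Admissible}(1). Shifts landing inside $\tau$ are harmless since $\tau$ is admissible. For a shift to a position $k+1\le n+1$, admissibility of the original word gives $(\omega_{k+1},\dots,\omega_{n+1})\leq_{lex}(\varepsilon^*_1,\dots,\varepsilon^*_{n+1-k})$; if this inequality is already strict among the first $n-k$ digits the appended tail is irrelevant, while if $(\omega_{k+1},\dots,\omega_n)=(\varepsilon^*_1,\dots,\varepsilon^*_{n-k})$ then replacing $\omega_{n+1}$ by the strictly smaller digit $\omega'_{n+1}$ forces a strict inequality at position $n+1-k$. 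In every case the constraint $\sigma^k\leq_{lex}\varepsilon^*$ holds, so the concatenation is admissible and $I_{n+1}(\omega_1,\dots,\omega'_{n+1})$ is full.

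Item (2) is the affine-scaling computation. Since $I_n(\omega)$ is full, Proposition \ref{full}(2) gives $T^n_\beta(I_n(\omega))=[0,1)$ and $T^n_\beta$ maps $I_n(\omega)$ bijectively and affinely onto $[0,1)$ with slope $\beta^n$. The cylinder $I_{n+m}(\omega,\omega')$ is exactly the preimage of $I_m(\omega')$ under this map, so $\lvert I_{n+m}(\omega,\omega')\rvert=\beta^{-n}\lvert I_m(\omega')\rvert=\lvert I_n(\omega)\rvert\cdot\lvert I_m(\omega')\rvert$, using $\lvert I_n(\omega)\rvert=\beta^{-n}$.

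For Item (3), write $t=\beta^n\lvert I_n(\omega)\rvert$, so $T^n_\beta(I_n(\omega))=[0,t)$. The crucial point is that the all-zero word always yields a full basic interval: since $\varepsilon^*_1(\beta)=\lceil\beta\rfloor\ge1>0$, the lexicographic test shows $I_k(0,\dots,0)=[0,\beta^{-k})$ is full for every $k$. Choosing $m_0=\lceil\log_\beta(1/t)\rceil$ we get $[0,\beta^{-m_0})\subseteq[0,t)$, and pulling this interval back through the affine map $T^n_\beta|_{I_n(\omega)}$ produces a full sub-cylinder $I_{n+m_0}(\omega,0,\dots,0)\subseteq I_n(\omega)$ (its pullback is admissible precisely because $[0,\beta^{-m_0})$ lies in the image $T^n_\beta(I_n(\omega))$). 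Hence the smallest order $m$ of a full sub-cylinder satisfies $m\le m_0\le\log_\beta(1/t)+1$, so $\beta^{-m}\ge t/\beta$, and therefore $\lvert I_{n+m}(\omega,\omega')\rvert=\beta^{-(n+m)}\ge\beta^{-n}\,t/\beta=\lvert I_n(\omega)\rvert/\beta$. I expect Item (3) to be the main obstacle: one must first establish that the image $T^n_\beta(I_n(\omega))$ is genuinely a left-anchored interval $[0,t)$, and then guarantee that a full sub-cylinder of controlled order is always available inside it. The all-zero cylinder furnishes this control cleanly, but the argument depends on verifying the fullness of $I_k(0,\dots,0)$ and on checking admissibility of the pullback via the containment in the image interval.
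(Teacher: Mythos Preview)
The paper does not supply its own proof of this proposition; it is quoted verbatim from \cite[Corollary~2.6]{SW2013} and used as a black box. So there is no in-paper argument to compare against, and I can only assess your proof on its own merits.

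Your argument is essentially correct, but there is one slip in the justification of the ``common tool''. You write that the left endpoint of $I_n(\omega)$ is $\sum_{i=1}^n\omega_i\beta^{-i}$, ``so $\beta^n$ times it is an integer''. This is false whenever $\beta\notin\mathbb{Z}$: the quantity $\sum_{i=1}^n\omega_i\beta^{n-i}$ is typically irrational. The underlying issue is that $T_\beta^n(x)\neq\beta^nx\bmod 1$ in general. The correct statement is that on $I_n(\omega)$ one has, by iterating $T_\beta(y)=\beta y-\varepsilon_1(y)$ with the digits fixed,
\[
T_\beta^n(x)=\beta^nx-\sum_{i=1}^n\omega_i\beta^{n-i},
\]
and this affine map clearly sends the left endpoint $\sum_{i=1}^n\omega_i\beta^{-i}$ to $0$. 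With this correction your description of $T_\beta^n(I_n(\omega))=[0,t)$ with $t=\beta^n\lvert I_n(\omega)\rvert$ stands, and the proofs of all three items go through exactly as you outline: Item~(1) via the lexicographic test and Proposition~\ref{full}(3), Item~(2) by the affine rescaling, and Item~(3) by locating the full all-zero cylinder $[0,\beta^{-m_0})$ inside $[0,t)$ with $m_0=\lceil\log_\beta(1/t)\rceil$ and pulling it back.
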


   Next, we define a sequence of numbers $\beta_N$ approaching to $\beta$ as follows. Let $\{\varepsilon^\ast_k(\beta):k\geq 1\}$ be the infinite $\beta$-expansion of $1$. Let $\beta_N$ be the unique real solution of the equation
\begin{equation}\label{ED1}
   1=\dfrac{\varepsilon^\ast_1(\beta)}{z}+\cdots+\dfrac{\varepsilon^\ast_N(\beta)}{z^N}.
\end{equation} 
   Therefore, $\beta_N<\beta$ and the sequence $\{\beta_N:N\geq 1\}$ increases and converges to $\beta$ when $N$ tends to infinity.

\begin{Lemma}\label{length}
   (\cite[Lemma 2.7]{SW2013}) For every $\omega\in\Sigma^n_{\beta_N}$ viewed as an element of $\Sigma^n_\beta$, one has $$\dfrac{1}{\beta^{n+N}}\leq\lvert I_n(\omega_1,\cdots,\omega_n)\rvert\leq\dfrac{1}{\beta^n}.$$
\end{Lemma}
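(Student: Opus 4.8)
The plan is to reduce both inequalities to the behaviour of $T^n_\beta$ on the basic interval and to control that through Parry's admissibility criterion (Theorem \ref{Admissible}(1)). Since $\beta_N<\beta$, Theorem \ref{Admissible}(3) gives $\omega\in\Sigma^n_{\beta_N}\subseteq\Sigma^n_\beta$, so $I_n(\omega)$ is a genuine basic interval for $\beta$; by Theorem \ref{Admissible}(2) it is an interval, and on it $T^n_\beta$ acts as the affine map $x\mapsto\beta^n x-(\omega_1\beta^{n-1}+\cdots+\omega_n)$ of slope $\beta^n$. Writing $t_\omega:=\beta^n|I_n(\omega)|$, the image $T^n_\beta I_n(\omega)$ is contained in $[0,1)$ and has length $t_\omega$, which already yields the upper bound $|I_n(\omega)|=t_\omega\beta^{-n}\le\beta^{-n}$. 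Everything then comes down to producing, inside $I_n(\omega)$, a long enough full basic interval.

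For the lower bound I would show that $(\omega_1,\dots,\omega_n,\underbrace{0,\dots,0}_{N-1})$ is \emph{full} with respect to $\beta$, so that by Proposition \ref{full} its interval has length $\beta^{-(n+N-1)}$ and, being contained in $I_n(\omega)$, forces $|I_n(\omega)|\ge\beta^{-(n+N-1)}\ge\beta^{-(n+N)}$. By the equivalence $(1)\Leftrightarrow(3)$ of Proposition \ref{full}, fullness amounts to checking that $(\omega,0^{N-1},\eta)$ is $\beta$-admissible for every $\beta$-admissible $\eta$. Using Theorem \ref{Admissible}(1), the shifts starting inside the zero block begin with $0<\varepsilon^\ast_1(\beta)$ and the shifts lying inside $\eta$ are tails of an admissible word, so the only shifts that can fail are those starting inside $\omega$; such a shift is dangerous precisely when a suffix $(\omega_{n-j+1},\dots,\omega_n)$ of $\omega$ coincides with the prefix $(\varepsilon^\ast_1(\beta),\dots,\varepsilon^\ast_j(\beta))$ of the expansion of $1$, in which case one must verify $(0^{N-1},\eta)\le_{lex}\sigma^{j}(\varepsilon^\ast_1(\beta),\varepsilon^\ast_2(\beta),\dots)$.

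The crux is a length bound on such matching suffixes: I claim every matching suffix has length $j\le N-1$. Indeed, a matching suffix of length $j\ge N$ would contain, as a prefix, the word $(\varepsilon^\ast_1(\beta),\dots,\varepsilon^\ast_N(\beta))$; but by the defining relation \eqref{ED1} this word has left endpoint $\sum_{k=1}^N\varepsilon^\ast_k(\beta)\beta_N^{-k}=1$ in base $\beta_N$, hence is not $\beta_N$-admissible, whereas prefixes and suffixes of $\beta_N$-admissible words are again $\beta_N$-admissible — contradicting $\omega\in\Sigma^n_{\beta_N}$. Granting $j\le N-1$ (and assuming $\varepsilon^\ast_N(\beta)\ge1$, which we may, since otherwise $\beta_N=\beta_{N'}$ for the largest $N'\le N$ with $\varepsilon^\ast_{N'}(\beta)\ge1$ and the bound only improves), the nonzero digit $\varepsilon^\ast_N(\beta)$ sits among positions $j+1,\dots,j+N-1$; thus $\sigma^{j}(\varepsilon^\ast(\beta))$ carries a nonzero entry within its first $N-1$ symbols, while $(0^{N-1},\eta)$ opens with $N-1$ zeros, so the required lexicographic inequality holds automatically. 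This settles admissibility, hence fullness, hence the lower bound.

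The main obstacle is the bookkeeping in the last two steps: correctly isolating which shifts of $(\omega,0^{N-1},\eta)$ impose a constraint, and then converting $\beta_N$-admissibility of $\omega$ into the quantitative suffix-length bound $j\le N-1$. Once the defining relation \eqref{ED1} is read as the assertion that $(\varepsilon^\ast_1(\beta),\dots,\varepsilon^\ast_N(\beta))$ is \emph{inadmissible} for $\beta_N$, the remainder is a direct lexicographic comparison, and the chain $\beta^{-(n+N)}\le\beta^{-(n+N-1)}\le|I_n(\omega)|\le\beta^{-n}$ follows.
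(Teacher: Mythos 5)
The paper itself gives no proof of this lemma — it is quoted verbatim from \cite[Lemma 2.7]{SW2013} — and your argument is correct and follows the same route as the cited proof: the upper bound from the fact that $T^n_\beta$ has slope $\beta^n$ on a cylinder with image in $[0,1)$, and the lower bound by padding $\omega$ with zeros to produce a full cylinder inside $I_n(\omega)$, the key point being exactly your suffix-length bound, namely that no suffix of the $\beta_N$-admissible word $\omega$ can match $(\varepsilon^\ast_1(\beta),\dots,\varepsilon^\ast_N(\beta))$ since that block already sums to $1$ in base $\beta_N$, so the surviving dangerous shifts all meet the nonzero digit $\varepsilon^\ast_N(\beta)$ inside the appended zero block. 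Your variant with $N-1$ appended zeros even yields the marginally sharper bound $\beta^{-(n+N-1)}$, which of course implies the stated $\beta^{-(n+N)}$.
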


\medskip

\section{Proofs of Theorems \ref{Special-case} and \ref{Nonspecial} }\label{sec2}
   First, we give an easy fact which is useful for the proofs of Theorems \ref{Special-case} and \ref{Nonspecial}.
\begin{fact}\label{fact}
	For any $x\in[0,1)$, if there is an integer $n_0$ such that $T^{n_0}_\beta x=0$, then $$\nu_\beta(x)=\hat{\nu}_\beta(x)=\infty.$$
\end{fact}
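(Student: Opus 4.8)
The plan is to show that if $T^{n_0}_\beta x = 0$ for some $n_0$, then the orbit of $x$ reaches the fixed point $0$ and stays there, making the approximation conditions in both $\nu_\beta(x)$ and $\hat\nu_\beta(x)$ trivially satisfiable for every exponent $v$. First I would observe that $T_\beta 0 = \beta\cdot 0 - \lfloor\beta\cdot 0\rfloor = 0$, so $0$ is a fixed point of $T_\beta$. Consequently, if $T^{n_0}_\beta x = 0$, then by iterating we get $T^{n}_\beta x = 0$ for every $n \geq n_0$; that is, the tail of the orbit is identically zero.

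With this in hand, I would verify the two exponents separately. For $\nu_\beta(x)$, recall
$$\nu_\beta(x) = \sup\left\{v \geq 0 : T^n_\beta x < (\beta^n)^{-v} \text{ for infinitely many } n\right\}.$$
Since $T^n_\beta x = 0 < (\beta^n)^{-v}$ holds for all $n \geq n_0$ and every fixed $v \geq 0$, the inequality has infinitely many solutions for arbitrarily large $v$, whence $\nu_\beta(x) = \infty$. For $\hat\nu_\beta(x)$, recall
$$\hat\nu_\beta(x) = \sup\left\{v \geq 0 : \forall\, N \gg 1,\ T^n_\beta x < (\beta^N)^{-v} \text{ has a solution } n \in [0,N]\right\}.$$
Here I would take, for each $N \geq n_0$, the index $n = n_0 \in [0,N]$: then $T^{n_0}_\beta x = 0 < (\beta^N)^{-v}$ for every $v \geq 0$, so the uniform condition holds for all large $N$ and all $v$, giving $\hat\nu_\beta(x) = \infty$ as well.

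This argument is essentially immediate once the fixed-point property of $0$ is noted, so there is no serious obstacle; the only point requiring a little care is the distinction between the asymptotic ("infinitely many $n$") and uniform ("for all large $N$, some $n \in [0,N]$") quantifiers. In both cases the exact same witness, namely the vanishing of $T^n_\beta x$ for $n \geq n_0$, settles the matter, and the strict inequality $0 < (\beta^N)^{-v}$ is never an issue since $(\beta^N)^{-v} > 0$ for all finite $v$. Hence both exponents are infinite.
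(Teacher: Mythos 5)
Your proof is correct and is exactly the intended argument: the paper states this as an "easy fact" and omits the proof entirely, and your observation that $0$ is a fixed point of $T_\beta$, so $T^n_\beta x=0<(\beta^N)^{-v}$ serves as a witness for both the asymptotic and the uniform condition for every $v$, is the natural way to fill that gap. No issues.
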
   

   Now, we prove that if $\overline{v}_2=\infty$, then the Hausdorff dimensions of the sets $\mathcal{L}(\psi_1)\cap\mathcal{U}(\psi_2)$ and $\mathcal{U}(\psi_2)$ are zero.   
\begin{Proposition}\label{Pro}
   If $\overline{v}_2=\infty$, then $\mathcal{U}(\psi_2)\subseteq\{x\in[0,1]:\nu_\beta(x)=\infty\}$. Thus, $${\rm dim}_H(\mathcal{L}(\psi_1)\cap\mathcal{U}(\psi_2))={\rm dim}_H(\mathcal{U}(\psi_2))=0.$$	
\end{Proposition}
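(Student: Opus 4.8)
The plan is to prove the two assertions in sequence: first the set inclusion $\mathcal{U}(\psi_2)\subseteq\{x\in[0,1]:\nu_\beta(x)=\infty\}$, and then the dimension statement, which will follow from the inclusion together with a dimension bound for the super-level sets of $\nu_\beta$. For the dimension step I would reduce everything to Theorem SW applied with base point $x_0=0$.

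For the inclusion, I would fix an arbitrary $x\in\mathcal{U}(\psi_2)$ and an arbitrary target $M>0$; the goal is to show $\nu_\beta(x)\geq M$, so that letting $M\to\infty$ gives $\nu_\beta(x)=\infty$. Since $\overline{v}_2=\infty$, by the meaning of the $\limsup$ there is a sequence $N_k\to\infty$ with $-\log_\beta\psi_2(N_k)/N_k\to\infty$, hence $\psi_2(N_k)<\beta^{-MN_k}$ for all large $k$. Because $x\in\mathcal{U}(\psi_2)$, for each such $N_k$ (large enough to lie in the range $N\gg1$) there is an index $n_k\in[0,N_k]$ with $T_\beta^{n_k}x<\psi_2(N_k)<\beta^{-MN_k}$. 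Using $n_k\leq N_k$ and $M>0$ this yields $T_\beta^{n_k}x<\beta^{-MN_k}\leq(\beta^{n_k})^{-M}$.

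The delicate point, and the one I expect to be the main obstacle, is that the indices $n_k$ are only constrained by $n_k\in[0,N_k]$, so a priori they need be neither distinct nor unbounded. I would split into two cases. If $\{n_k\}$ is unbounded, it takes infinitely many distinct values, so $T_\beta^{n}x<(\beta^{n})^{-M}$ holds for infinitely many $n$, which is exactly the condition giving $\nu_\beta(x)\geq M$. If instead $\{n_k\}$ is bounded, then some value $n^\ast$ occurs for infinitely many $k$, and along those $k$ we get $T_\beta^{n^\ast}x<\beta^{-MN_k}\to0$; hence $T_\beta^{n^\ast}x=0$, and Fact \ref{fact} forces $\nu_\beta(x)=\infty\geq M$. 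In either case $\nu_\beta(x)\geq M$, completing the inclusion.

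Finally, for the dimension I would bound $\{x:\nu_\beta(x)=\infty\}$ from above. For every fixed $v>0$, any $x$ with $\nu_\beta(x)=\infty$ satisfies $T_\beta^n x<\beta^{-nv}$ for infinitely many $n$, so $\{x:\nu_\beta(x)=\infty\}\subseteq\mathcal{L}(\psi^{(v)},0)$ with $\psi^{(v)}(n)=\beta^{-nv}$, for which $\liminf_n(-\log_\beta\psi^{(v)}(n)/n)=v$. Theorem SW then gives ${\rm dim}_H\{x:\nu_\beta(x)=\infty\}\leq 1/(1+v)$, and letting $v\to\infty$ shows this dimension is $0$. Since $\mathcal{L}(\psi_1)\cap\mathcal{U}(\psi_2)\subseteq\mathcal{U}(\psi_2)\subseteq\{x:\nu_\beta(x)=\infty\}$, both Hausdorff dimensions vanish, as claimed.
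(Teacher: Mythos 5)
Your proof is correct and follows essentially the same route as the paper: exploit $\overline{v}_2=\infty$ to produce arbitrarily small targets $\psi_2(N_k)<\beta^{-MN_k}$, use membership in $\mathcal{U}(\psi_2)$ to find hitting times $n_k\le N_k$, and handle the possible non-distinctness of the $n_k$ via Fact \ref{fact}, before concluding with Theorem SW. The only cosmetic difference is that the paper first splits on whether the orbit of $x$ hits $0$ and, when it does not, builds pairwise distinct indices by an inductive selection, whereas you split on whether the chosen indices are bounded and deduce $T_\beta^{n^\ast}x=0$ in the bounded case; the two case analyses are logically equivalent.
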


\begin{proof}
   For every $x\in\mathcal{U}(\psi_2)$, we distinguish two cases:
	
   {\bf Case 1:} There is an integer $n_0$ such that $T^{n_0}_\beta x=0$. By Fact \ref{fact}, we have $$\nu_\beta(x)=\infty.$$ 
	
   {\bf Case 2:} For any $n\in\mathbb{N}$, we always have $T^n_\beta x>0$. Since $x\in\mathcal{U}(\psi_2)$, there is $N_0\geq1$ such that for any $N\geq N_0$, there is an integer $n\in[0,N]$ such that $$0<T_\beta^nx<\psi_2(N).$$ Since $\limsup\limits_{n\rightarrow\infty}\dfrac{-\log_\beta\psi_2(n)}{n}=\overline{v}_2=\infty$, for any $L>0$ large enough, there is a sequence $\{n_i\}$ such that $\psi_2(n_i)\leq\beta^{-n_iL}$. Let $m_1:=\min\{n_i:~n_i\geq N_0\}$, there is an integer $j_1\in[0,m_1]$ such that $$0<T_\beta^{j_1}x<\psi_2(m_1)\leq\beta^{-m_1L}\leq\beta^{-j_1L}.$$ Take $m_2:=\min\left\{n_i>m_1:\beta^{-n_iL}<T_\beta^{j_1}x\right\}$. There is $j_2\in[0,m_2]$ such that  $$0<T_\beta^{j_2}x<\psi_2(m_2)\leq\beta^{-m_2L}\leq\beta^{-j_2L}.$$ Since $T_\beta^{j_2}x<\psi_2(m_2)\leq\beta^{-m_2L}<T_\beta^{j_1}x$, $j_2\neq j_1$. Repeat this process, one can get a sequence of pairwise disjoint integers $\{j_i:i\geq1\}$ such that $$0<T_\beta^{j_i}x<\psi_2(m_i)\leq\beta^{-m_iL}\leq\beta^{-j_iL}.$$ Therefore, $\nu_\beta(x)\geq L$. By the arbitrariness of $L$, we have $\nu_\beta(x)=\infty$. 
	
   Hence, in all cases, we have $$\mathcal{U}(\psi_2)\subseteq\{x\in[0,1]:\nu_\beta(x)=\infty\}.$$ By {\bf Theorem SW}, $${\rm dim}_H(\mathcal{L}(\psi_1)\cap\mathcal{U}(\psi_2))\leq{\rm dim}_H(\mathcal{U}(\psi_2))\leq{\rm dim}_H(\{x\in[0,1]:\nu_\beta(x)=\infty\})=0.$$
\end{proof}

   We discuss the relation between $\nu_\beta(x)$, $\hat{\nu}_\beta(x)$ and $\underline{v}_1$, $\overline{v}_1$, $\underline{v}_2$, $\overline{v}_2$, which are important to the proof of Theorem \ref{Nonspecial}. 
\begin{Lemma}\label{SET}
   For $0\leq\underline{v}_1\leq\overline{v}_1<\infty$ and $0\leq\underline{v}_2\leq\overline{v}_2<\infty$, one has
\begin{enumerate}[$(1)$]
   \item $\{x\in[0,1]:\nu_\beta(x)>\overline{v}_1\}\cap\{x\in[0,1]:\hat{\nu}_\beta(x)>\overline{v}_2\}\subseteq\mathcal{L}(\psi_1)\cap\mathcal{U}(\psi_2);$ 
   \item  $\mathcal{L}(\psi_1)\cap\mathcal{U}(\psi_2)\subseteq\{x\in[0,1]:\nu_\beta(x)\geq \underline{v}_1\}\cap\{x\in[0,1]:\hat{\nu}_\beta(x)\geq\underline{v}_2\}.$ 
 \end{enumerate}	
\end{Lemma}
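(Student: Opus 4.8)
The plan is to reduce both inclusions to a single elementary dictionary comparing the given functions $\psi_1,\psi_2$ with the pure exponential rates $\beta^{-nv}$ that appear in the definitions of $\nu_\beta$ and $\hat{\nu}_\beta$. I would first record two observations that follow immediately from the definitions of the four exponents. First, if $v>\overline{v}_i$ then, since $\overline{v}_i=\limsup_{n\to\infty}(-\log_\beta\psi_i(n))/n$, for all sufficiently large $n$ one has $(-\log_\beta\psi_i(n))/n<v$, i.e. $\psi_i(n)>\beta^{-nv}$. Second, if $v<\underline{v}_i$ then, since $\underline{v}_i=\liminf_{n\to\infty}(-\log_\beta\psi_i(n))/n$, for all sufficiently large $n$ one has $\psi_i(n)<\beta^{-nv}$. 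These two eventual pointwise inequalities are the only quantitative inputs the argument needs.

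For inclusion $(1)$, fix $x$ with $\nu_\beta(x)>\overline{v}_1$ and $\hat{\nu}_\beta(x)>\overline{v}_2$. I would choose $v$ with $\overline{v}_1<v<\nu_\beta(x)$: by the definition of $\nu_\beta$ there are infinitely many $n$ with $T_\beta^n x<\beta^{-nv}$, while the first observation gives $\beta^{-nv}<\psi_1(n)$ for all large $n$; intersecting the infinite set with this cofinite set yields infinitely many $n$ with $T_\beta^n x<\psi_1(n)$, so $x\in\mathcal{L}(\psi_1)$. Similarly, choosing $v$ with $\overline{v}_2<v<\hat{\nu}_\beta(x)$, the definition of $\hat{\nu}_\beta$ provides, for all $N\gg1$, a solution $n\in[0,N]$ of $T_\beta^n x<\beta^{-Nv}$, and the first observation gives $\beta^{-Nv}<\psi_2(N)$ for all large $N$, whence $x\in\mathcal{U}(\psi_2)$.

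For inclusion $(2)$, take $x\in\mathcal{L}(\psi_1)\cap\mathcal{U}(\psi_2)$. If $\underline{v}_1=0$ then $\nu_\beta(x)\ge0=\underline{v}_1$ trivially; otherwise I would fix any $v<\underline{v}_1$. By the second observation $\psi_1(n)<\beta^{-nv}$ for all large $n$, and $x\in\mathcal{L}(\psi_1)$ supplies infinitely many $n$ with $T_\beta^n x<\psi_1(n)$, so $T_\beta^n x<\beta^{-nv}$ holds for infinitely many $n$ and $\nu_\beta(x)\ge v$; letting $v\uparrow\underline{v}_1$ gives $\nu_\beta(x)\ge\underline{v}_1$. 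The argument for $\hat{\nu}_\beta(x)\ge\underline{v}_2$ is identical in structure, using that $x\in\mathcal{U}(\psi_2)$ yields, for all $N\gg1$, an $n\in[0,N]$ with $T_\beta^n x<\psi_2(N)<\beta^{-Nv}$ whenever $v<\underline{v}_2$.

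I do not expect a serious obstacle: the statement is essentially a bookkeeping exercise translating the $\liminf$/$\limsup$ definitions into eventual pointwise inequalities. The only points demanding care are (a) keeping the inequalities $v>\overline{v}_i$ and $v<\underline{v}_i$ \emph{strict}, so that the $\limsup$/$\liminf$ comparisons are valid for all large $n$ rather than merely along a subsequence, and (b) matching the uniform quantifier structure of $\hat{\nu}_\beta$ with that of $\mathcal{U}(\psi_2)$, in particular observing that in $(\beta^N)^{-v}=\beta^{-Nv}$ the exponent is governed by $N$ and not by the solution index $n\in[0,N]$, so the comparison between $\psi_2(N)$ and $\beta^{-Nv}$ is indexed by $N$ exactly as required.
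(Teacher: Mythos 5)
Your proposal is correct and follows essentially the same route as the paper: both arguments translate the $\liminf$/$\limsup$ definitions of $\underline{v}_i,\overline{v}_i$ into eventual pointwise comparisons between $\psi_i(n)$ and $\beta^{-nv}$, and then match these against the defining properties of $\nu_\beta$ and $\hat{\nu}_\beta$ (the paper phrases the strict gap via an auxiliary $\varepsilon$ where you choose an intermediate $v$, which is the same device). No gaps.
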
      
   
\begin{proof}
   $(1)$ For any $x$ with $\nu_\beta(x)>\overline{v}_1$ and any $\varepsilon>0$ small enough, there is a sequence $\{n_i\}$ such that $T_\beta^{n_i}x<\beta^{-n_i(\overline{v}_1+\varepsilon)}$. By the definition of $\overline{v}_1$, for the above $\varepsilon$, there is an integer $i_0$ such that $$\psi_1(n_i)>\beta^{-n_i(\overline{v}_1+\varepsilon)},\quad\text{for any $i\geq i_0$}.$$ Then, $$T_\beta^{n_i}x<\beta^{-n_i(\overline{v}_1+\varepsilon)}<\psi_1(n_i),~\text{for any $i\geq i_0$}.$$ Thus, $x\in\mathcal{L}(\psi_1)$. Therefore, $$\{x\in[0,1]:\nu_\beta(x)>\overline{v}_1\}\subseteq\mathcal{L}(\psi_1).$$ By similar discussion,  $\{x\in[0,1]:\hat{\nu}_\beta(x)>\overline{v}_2\}\subseteq \mathcal{U}(\psi_2).$ Thus, $$\{x\in[0,1]:\nu_\beta(x)>\overline{v}_1\}\cap\{x\in[0,1]:\hat{\nu}_\beta(x)>\overline{v}_2\}\subseteq\mathcal{L}(\psi_1)\cap\mathcal{U}(\psi_2).$$
  
   $(2)$ For any $x\in\mathcal{L}(\psi_1)$, there is a sequence $\{n_i\}$ such that $$T_\beta^{n_i}x<\psi_1(n_i).$$ By the definition of $\underline{v}_1$, for any $\varepsilon>0$, there is an integer $i_0$ such that $$T_\beta^{n_i}x<\psi_1(n_i)<\beta^{-n_i(\underline{v}_1-\varepsilon)},~\text{for any $i\geq i_0$}.$$ Thus, $\nu_\beta(x)\geq\underline{v}_1-\varepsilon$. Therefore, $$\mathcal{L}(\psi_1)\subseteq\{x\in[0,1]:\nu_\beta(x)\geq \underline{v}_1-\varepsilon\}.$$ By the arbitrariness of $\varepsilon$, one can obtain $$\mathcal{L}(\psi_1)\subseteq\cap_{\varepsilon>0}\{x\in[0,1]:\nu_\beta(x)\geq\underline{v}_1-\varepsilon\}=\{x\in[0,1]:\nu_\beta(x)\geq\underline{v}_1\}.$$ By similar discussion,  $\mathcal{U}(\psi_2)\subseteq\{x\in[0,1]:\hat{\nu}_\beta(x)\geq\underline{v}_2\}.$ Thus, $$\mathcal{L}(\psi_1)\cap\mathcal{U}(\psi_2)\subseteq\{x\in[0,1]:\nu_\beta(x)\geq\underline{v}_1\}\cap\{x\in[0,1]:\hat{\nu}_\beta(x)\geq\underline{v}_2\}.$$      
\end{proof}
   
   To prove Theorem \ref{Special-case}, we characterize the set of all points $x$ with $\nu_\beta(x)=\infty$ and $\hat{\nu}_\beta(x)=\infty$.  
\begin{Lemma}\label{infinite}
   If $\nu_\beta(x)=\infty$ and $\hat{\nu}_\beta(x)=\infty$, then one has $$(1)~\cup_{n=1}^{\infty}\cup_{\omega\in\Sigma^n_\beta}\{x\in[0,1]:d_\beta(x)=(\omega,0^\infty)\}\subseteq\{x\in[0,1]:\nu_\beta(x)=\infty\};$$ $$(2)~\cup_{n=1}^{\infty}\cup_{\omega\in\Sigma^n_\beta}\{x\in[0,1]:d_\beta(x)=(\omega,0^\infty)\}=\{x\in[0,1]:\hat{\nu}_\beta(x)=\infty\}.$$
\end{Lemma}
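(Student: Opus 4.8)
The plan is to recognize that the left-hand union appearing in both (1) and (2) is nothing but the set of points whose orbit eventually hits $0$. Indeed, $d_\beta(x) = (\omega, 0^\infty)$ for some $\omega \in \Sigma^n_\beta$ holds exactly when the shifted expansion $\sigma^n d_\beta(x) = 0^\infty$, i.e. when $T^n_\beta x = 0$; so in both items the left-hand set equals $\{x \in [0,1] : T^{n}_\beta x = 0 \text{ for some } n \ge 1\}$. Granting this reformulation, part (1) and the inclusion ``$\subseteq$'' in part (2) are immediate from Fact \ref{fact}: if $T^{n_0}_\beta x = 0$ then $\nu_\beta(x) = \hat{\nu}_\beta(x) = \infty$, so every such $x$ lies in $\{\nu_\beta = \infty\}$ and in $\{\hat{\nu}_\beta = \infty\}$.

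The entire content therefore lies in the reverse inclusion ``$\supseteq$'' of part (2): I must show that $\hat{\nu}_\beta(x) = \infty$ forces $T^{n}_\beta x = 0$ for some $n$. I would argue the contrapositive, assuming $T^n_\beta x > 0$ for all $n \ge 0$ and proving the much stronger bound $\hat{\nu}_\beta(x) \le 1$. The key tool is the elementary dynamics of the depth function $a_n := -\log_\beta(T^n_\beta x) \in [0,\infty)$: whenever $a_n > 1$, equivalently $T^n_\beta x < \beta^{-1}$, the digit $\varepsilon_{n+1}$ vanishes and $T^{n+1}_\beta x = \beta T^n_\beta x$, so $a_{n+1} = a_n - 1$. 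Thus a deep approach to $0$ can only be produced by a single jump from a time where $a \le 1$, after which the orbit climbs back out at unit speed, one zero digit per step; a value $a_m = D$ sits at the top of a run of about $D$ consecutive zeros occupying positions $m+1, \dots, m + \lfloor D \rfloor$.

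With $M(N) := \max_{0 \le n \le N} a_n$, the definition of $\hat{\nu}_\beta$ gives directly $\hat{\nu}_\beta(x) = \liminf_{N\to\infty} M(N)/N$, so it suffices to produce arbitrarily large $N'$ with $M(N')/N' \le 1 + o(1)$. Let $n^* \le N$ attain $M(N)$; since $a_{n+1} = a_n - 1$ whenever $a_n > 1$, this maximum can only sit at the top of an excursion, so $a_{n^*-1} \le 1$ (or $n^* = 0$) and thereafter $a$ falls by $1$ per step, returning to the level $\le 1$ only near time $n^* + M(N)$. Hence the next time the orbit can again come within $\beta^{-1}$ of $0$ is at least $n^* + M(N) - O(1)$. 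Taking $N'$ to be the time just before this next excursion gives $M(N') = M(N)$ while $N' \ge n^* + M(N) - O(1) \ge M(N) - O(1)$, so $M(N')/N' \le 1 + o(1)$. Letting the excursion heights grow yields $\liminf_N M(N)/N \le 1$, hence $\hat{\nu}_\beta(x) \le 1 < \infty$, which contradicts $\hat{\nu}_\beta(x) = \infty$ and completes the proof.

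\emph{The main obstacle} is the bookkeeping in this last step: one must verify that the peak realizing $M(N)$ is followed by a climb of length comparable to $M(N)$, so that a slightly larger window $N'$ of order $M(N)$ registers no higher peak. Two points need care, though neither affects the limiting ratio: the $O(1)$ discrepancy between $a_n$ and the integer run-length $\lfloor a_n \rfloor$, and the corner case in which the running maximum up to time $N$ is achieved by an earlier excursion rather than the current one, which I would handle by indexing the successive excursion tops $t_1 < t_2 < \cdots$ with heights $h_i$, using $t_{i+1} \ge t_i + h_i - O(1)$ together with the running maximum $H_i = \max_{j \le i} h_j$, and evaluating at $N' = t_{i+1} - 1$ to obtain $M(N')/N' = H_i/(t_{i+1}-1) \le H_i/(H_i - O(1)) \to 1$. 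The easy residual case, where $a_n \le 1$ for all large $n$, gives $M(N)/N \to 0$ and hence $\hat{\nu}_\beta(x) = 0$, so it requires no separate work.
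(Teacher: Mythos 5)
Your proof is correct and follows essentially the same route as the paper: part (1) and the forward inclusion of (2) come from Fact \ref{fact}, and for the reverse inclusion both arguments reduce to showing that a point whose orbit never hits $0$ has $\hat{\nu}_\beta(x)\le 1$, because a deep approach $T^n_\beta x<\beta^{-\ell}$ forces a run of about $\ell$ zero digits (a unit-speed climb-out), so the depth achieved by time $N$ can never exceed $N$ by more than a bounded amount. The only difference is cosmetic and in level of detail: the paper encodes the excursions as maximal blocks of zeros $n'_i,m'_i$ and quotes Bugeaud--Liao for the identity $\hat{\nu}_\beta(x)=\liminf_k (m_k-n_k)/n_{k+1}\le 1$, whereas you derive the equivalent identity $\hat{\nu}_\beta(x)=\liminf_N M(N)/N$ for the depth function and bound it directly.
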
	 

\begin{proof}
   By Fact \ref{fact}, the statement (1) and the inclusion $$\cup_{n=1}^{\infty}\cup_{\omega\in\Sigma^n_\beta}\{x\in[0,1]:d_\beta(x)=(\omega,0^\infty)\}\subseteq\{x\in[0,1]:\hat{\nu}_\beta(x)=\infty\}$$ are obvious. What is left is to show $$\{x\in[0,1]:\hat{\nu}_\beta(x)=\infty\}\subseteq\cup_{n=1}^{\infty}\cup_{\omega\in\Sigma^n_\beta}\{x\in[0,1]:d_\beta(x)=(\omega,0^\infty)\}.$$ By contrary, for any $x$ with $\hat{\nu}_\beta(x)=\infty$, we suppose $$x\notin\cup_{n=1}^{\infty}\cup_{\omega\in\Sigma^n_\beta}\{x\in[0,1]:d_\beta(x)=(\omega,0^\infty)\}.$$ Then $T^n_\beta x>0$ for every $n\in\mathbb{N}$. Denote the $\beta$-expansion of $x$ by $$x=\dfrac{a_1}{\beta}+\dfrac{a_2}{\beta^2}+\cdots+\dfrac{a_n}{\beta^n}+\cdots,$$ where $a_i\in\{0,\cdots,\lceil\beta\rfloor\}$, for all $i\geq1$. We can take two increasing sequences $\left\{n'_i:i\geq 1\right\}$ and $\left\{m'_i:i\geq1\right\}$ with the following properties:
\begin{enumerate}[(1)]
   \item For every $i\geq1$, one has $$a_{n'_i}>0,\quad a_{n'_i+1}=\cdots=a_{m'_i-1}=0,\quad a_{m'_i}>0.$$
   \item For every $a_n=0$, there is an integer $i$ such that $n'_i<n<m'_i$. 
\end{enumerate}
   By the choices of $\left\{n'_i: i\geq 1\right\}$ and $\left\{m'_i:i\geq1\right\}$, for every $i\geq1$, one has $n'_i<m'_i<n'_{i+1}$. Since $\hat{\nu}_\beta(x)>0$, one has $$\limsup_{i\rightarrow\infty}(m'_i-n'_i)=\infty.$$ Take $n_1=n'_1$ and $m_1=m'_1$. Suppose that $m_k,~n_k$ have been defined. Let $i_1=1$ and $i_{k+1}:=\min\{i>i_k:m'_i-n'_i> m_k-n_k\},~{\rm for }~k\geq1.$ Then, define $$n_{k+1}:=n'_{i_{k+1}},\quad m_{k+1}:=m'_{i_{k+1}}.$$ Note that $\limsup_{i\rightarrow\infty}(m'_i-n'_i)=\infty$, the sequence $\{i_k:k\geq1\}$ is well defined. By this way, we obtain the subsequences $\{n_k:k\geq1\}$ and $\{m_k:k\geq1\}$ of $\left\{n'_i:i\geq 1\right\}$ and $\left\{m'_i:i\geq1\right\}$, respectively, such that the sequence $\{m_k-n_k: k\geq1\}$ is non-decreasing. As the similar discussion in \cite{YLiao2016}, one has $$\hat{\nu}_\beta(x)=\liminf_{k\rightarrow\infty}\dfrac{m_k-n_k}{n_{k+1}}\leq1.$$ This contradicts our assumption $\hat{\nu}_\beta(x)=\infty$. Thus, we have proved $$\{x\in[0,1]:\hat{\nu}_\beta(x)=\infty\}\subseteq\cup_{n=1}^{\infty}\cup_{\omega\in\Sigma^n_\beta}\{x\in[0,1]:d_\beta(x)=(\omega,0^\infty)\}.$$ Therefore, $$\{x\in[0,1]:\hat{\nu}_\beta(x)=\infty\}=\cup_{n=1}^{\infty}\cup_{\omega\in\Sigma^n_\beta}\{x\in[0,1]:d_\beta(x)=(\omega,0^\infty)\},$$ which implies that the set $\{x\in[0,1]:\hat{\nu}_\beta(x)=\infty\}$ is countable.
\end{proof}   

\begin{Lemma}\label{empty}
	The set $\{x\in[0,1]:1<\hat{\nu}_\beta(x)<\infty\}$ is empty.
\end{Lemma}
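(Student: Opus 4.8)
The plan is to establish the sharp dichotomy that for every $x\in[0,1]$ the value $\hat{\nu}_\beta(x)$ is \emph{either} at most $1$ \emph{or} equal to $\infty$, with nothing strictly in between; emptiness of $\{x\in[0,1]:1<\hat{\nu}_\beta(x)<\infty\}$ is then immediate. I would split according to whether the orbit of $x$ ever hits $0$. If there is $n_0$ with $T^{n_0}_\beta x=0$, then Fact \ref{fact} gives $\hat{\nu}_\beta(x)=\infty$, so such $x$ lie outside the set. It remains to treat the case $T^n_\beta x>0$ for all $n$ (equivalently, $d_\beta(x)=(a_1,a_2,\dots)$ has infinitely many nonzero digits) and to show that there $\hat{\nu}_\beta(x)\le 1$.

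For that case one natural route is to reuse verbatim the machinery from the proof of Lemma \ref{infinite}: extract the zero-runs $(n'_i,m'_i)$ and the non-decreasing subsequence $\{m_k-n_k\}$, obtaining $\hat{\nu}_\beta(x)=\liminf_{k\to\infty}(m_k-n_k)/n_{k+1}$; since $m_k<n_{k+1}$ one has $m_k-n_k<n_{k+1}$ for every $k$, whence each ratio is $<1$ and $\hat{\nu}_\beta(x)\le 1$. However, the route I would actually write out is a self-contained contradiction argument that avoids re-deriving the liminf formula. Suppose $\hat{v}:=\hat{\nu}_\beta(x)\in(1,\infty)$ and fix $v\in(1,\hat{v})$. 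Because the defining set of $\hat{\nu}_\beta$ is a down-set in $v$ (if $T^n_\beta x<\beta^{-Nv}$ then $T^n_\beta x<\beta^{-Nv'}$ for every $v'\le v$), the uniform condition holds at level $v$: there is $N_0$ so that for all $N\ge N_0$ some $n=n(N)\in[0,N]$ satisfies $T^{n}_\beta x<\beta^{-Nv}$.

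The engine is the elementary observation that $T^{n}_\beta x<\beta^{-M}$ forces $a_{n+1}=\cdots=a_{n+M}=0$; indeed, if some $a_{n+j}\ge 1$ with $1\le j\le M$ then $T^{n}_\beta x\ge a_{n+j}\beta^{-j}\ge\beta^{-M}$. Thus each witness $n(N)$ produces a run of $M=\lfloor Nv\rfloor$ consecutive zero digits starting at position $n+1$. I would apply this with $N=m-1$ for a fixed target position $m$: the run covers position $m$ as soon as $(m-1)v\ge m-n$, and since $(m-1)v\ge m\ge m-n$ holds for every $n\ge 0$ once $m\ge 1+1/(v-1)$, we get $a_m=0$. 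Letting $m$ range over all sufficiently large integers shows that $a_m=0$ for all large $m$, so $T^{n}_\beta x=0$ for some $n$, contradicting the standing assumption of this case (and, via Fact \ref{fact}, the finiteness of $\hat{\nu}_\beta(x)$).

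The only delicate point is this last piece of bookkeeping: one must guarantee that the forced zero-run actually reaches the chosen position $m$ even when the witness $n(N)$ is as small as $0$, which is precisely why $m$ is taken large compared with $1/(v-1)$ and not merely compared with $N_0$ (the binding case being $n=0$, where $(m-1)v\ge m$ is needed). The boundary point $x=1$ is handled identically through its expansion $d_\beta(1)$. Everything else reduces to Fact \ref{fact} together with the elementary digit estimate above.
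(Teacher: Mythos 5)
Your proof is correct, but it takes a different route from the paper's. The paper disposes of this lemma in one line by pointing back to the proof of Lemma \ref{infinite}(2): there, for $x$ whose orbit never hits $0$, the zero-runs $(n_k,m_k)$ are extracted and the identity $\hat{\nu}_\beta(x)=\liminf_{k\to\infty}(m_k-n_k)/n_{k+1}\le 1$ is invoked (itself imported from Bugeaud--Liao), which immediately forces the dichotomy $\hat{\nu}_\beta(x)\le 1$ or $=\infty$. You instead give a self-contained contradiction argument: from $\hat{\nu}_\beta(x)>1$ you pick $v\in(1,\hat{\nu}_\beta(x))$ (correctly using that the defining set is a down-set in $v$), convert each uniform witness $T^{n}_\beta x<\beta^{-Nv}$ into a forced zero-run of length $\lfloor Nv\rfloor$ via the elementary estimate $T^n_\beta x=\sum_{j\ge1}a_{n+j}\beta^{-j}\ge\beta^{-j}$ whenever $a_{n+j}\ge1$, and then check that for $m\ge v/(v-1)$ the run attached to $N=m-1$ necessarily covers position $m$ regardless of where the witness $n\in[0,m-1]$ sits (the worst case $n=0$ being exactly the inequality $(m-1)v\ge m$). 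This shows the digit string is eventually zero, contradicting the case hypothesis. The bookkeeping is right, including the observation that $m-n\le\lfloor(m-1)v\rfloor$ is equivalent to $m-n\le(m-1)v$ because $m-n$ is an integer. What your route buys is independence from the run-length liminf formula and from the external reference; what the paper's route buys is brevity and the stronger structural information (the exact value of $\hat{\nu}_\beta(x)$ in terms of the runs), which it reuses elsewhere. Either argument proves the lemma.
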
 

\begin{proof}
   This follows from the proof of Item $(2)$ of Lemma \ref{infinite}.
\end{proof}

\begin{Proposition}\label{uncountable}
   The sets $$\{x\in[0,1]:\nu_\beta(x)=\infty\},\quad \{x\in[0,1]:\hat{\nu}_\beta(x)=1\}$$ are uncountable.  	
\end{Proposition}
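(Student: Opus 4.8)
The plan is to exhibit a single uncountable family of points lying in the intersection $\{x:\nu_\beta(x)=\infty\}\cap\{x:\hat\nu_\beta(x)=1\}$; since this family is contained in both sets, uncountability of each follows at once. I index the family by binary strings $\mathbf i=(i_1,i_2,\dots)\in\{0,1\}^{\mathbb N}$ and realize each as a point whose $\beta$-expansion consists of isolated digits $1$ separated by long blocks of zeros, namely
\[
w(\mathbf i)=\bigl(1,0^{\ell_1},1,0^{\ell_2},1,0^{\ell_3},\dots\bigr),
\]
where the block lengths $\ell_k=\ell_k(\mathbf i)$ are chosen recursively. Writing $n_k$ for the position of the $k$-th digit $1$ (so $n_1=1$ and $n_{k+1}=n_k+1+\ell_k$), I pick each $\ell_k$ large enough that $\ell_k\ge r+1$, where $r=r(\beta)$ is the length of the initial run of zeros in $\sigma(\varepsilon^*(\beta))$ (and $r=0$ when $\varepsilon_1^*(\beta)\ge 2$), and also that $\ell_k\ge k\,n_k$; the latter forces $\ell_k/n_k\to\infty$. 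Concretely, once $\ell_1,\dots,\ell_{k-1}$ (hence $n_k$) are fixed, I set $\ell_k=\max\{r+1,\,k\,n_k\}+i_k$, so that the bit $i_k$ is recorded in the length of the $k$-th zero block.

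First I would check admissibility, so that $w(\mathbf i)$ really is $d_\beta(x_{\mathbf i})$ for a genuine point $x_{\mathbf i}\in[0,1)$. By Theorem \ref{Admissible}(1) it suffices to verify $\sigma^m w(\mathbf i)\le_{lex}\varepsilon^*(\beta)$ for all $m\ge 0$. Shifts beginning with $0$ are immediately dominated since $\varepsilon_1^*(\beta)\ge 1$, while a shift beginning with $1$ has the form $(1,0^{\ell_j},1,\dots)$; comparing it lexicographically with $\varepsilon^*(\beta)=(1,\varepsilon_2^*,\dots)$ shows it is dominated as soon as its zero block is at least as long as the initial zero block of $\sigma(\varepsilon^*(\beta))$, i.e. as soon as $\ell_j\ge r+1$, which holds by construction. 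Injectivity of $\mathbf i\mapsto x_{\mathbf i}$ is then clear: if $\mathbf i\neq\mathbf i'$ and $k$ is the first index of disagreement, the earlier blocks coincide, so $n_k=n_k'$ but $\ell_k\neq\ell_k'$, whence $w(\mathbf i)\neq w(\mathbf i')$ and, by uniqueness of $\beta$-expansions (Theorem \ref{Admissible}(2)), $x_{\mathbf i}\neq x_{\mathbf i'}$. Thus $\{x_{\mathbf i}:\mathbf i\in\{0,1\}^{\mathbb N}\}$ is uncountable.

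Next I would compute the two exponents. Since $T^{n_k}_\beta x_{\mathbf i}$ has $\beta$-expansion beginning with the block $0^{\ell_k}$, it lies in the full cylinder $I_{\ell_k}(0^{\ell_k})=[0,\beta^{-\ell_k})$, so $T^{n_k}_\beta x_{\mathbf i}<\beta^{-\ell_k}=(\beta^{n_k})^{-\ell_k/n_k}$; as $\ell_k/n_k\to\infty$, every $v>0$ is eventually an exponent, giving $\nu_\beta(x_{\mathbf i})=\infty$. For the uniform exponent, fix $v<1$ and a large $N$, and let $k$ be the largest index with $n_k\le N$, so $N<n_{k+1}=n_k+1+\ell_k$. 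Using $n=n_k\le N$ gives $T^{n_k}_\beta x_{\mathbf i}<\beta^{-\ell_k}$, and from $N<n_k+1+\ell_k$ together with $\ell_k(1-v)\ge(n_k+1)v$ (valid for all large $k$ because $\ell_k/n_k\to\infty$ and $v<1$) one deduces $\ell_k>Nv$, i.e. $T^{n_k}_\beta x_{\mathbf i}<(\beta^N)^{-v}$. Hence $\hat\nu_\beta(x_{\mathbf i})\ge v$ for every $v<1$, so $\hat\nu_\beta(x_{\mathbf i})\ge 1$. For the reverse inequality I invoke the results already proved: $x_{\mathbf i}$ has infinitely many nonzero digits, so it is not of the form $(\omega,0^\infty)$ and therefore $\hat\nu_\beta(x_{\mathbf i})\neq\infty$ by Lemma \ref{infinite}; since Lemma \ref{empty} rules out any value in $(1,\infty)$, this forces $\hat\nu_\beta(x_{\mathbf i})\le 1$. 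Combining, $\hat\nu_\beta(x_{\mathbf i})=1$.

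Therefore the uncountable family $\{x_{\mathbf i}\}$ is contained in $\{x:\nu_\beta(x)=\infty\}\cap\{x:\hat\nu_\beta(x)=1\}$, and both sets are uncountable. The main obstacle is the admissibility check in the regime $1<\beta<2$, where $\varepsilon^*(\beta)$ may itself begin with a long run of zeros; this is exactly what forces the lower bound $\ell_k\ge r+1$ on every zero block and must be built into the recursive choice of the $\ell_k$ from the outset. Once the growth condition $\ell_k/n_k\to\infty$ is arranged, the remaining estimates for $\nu_\beta$ and for the lower bound on $\hat\nu_\beta$ are routine.
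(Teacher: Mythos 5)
Your proof is correct and follows essentially the same route as the paper: both exhibit an uncountable family of points whose $\beta$-expansions consist of isolated digits $1$ separated by zero blocks growing superlinearly in their starting positions, which forces $\nu_\beta=\infty$ and $\hat\nu_\beta=1$. Your version merely swaps the paper's parametrization by reals $a>1$ for binary strings and fills in details the paper leaves to the reader (the admissibility check for $1<\beta<2$ and the deduction $\hat\nu_\beta\le 1$ from Lemmas \ref{infinite} and \ref{empty}).
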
       
 
\begin{proof}
   For any real number $a>1$, we give a correspondence: $$\Psi(a)\mapsto 10^{\left\lfloor 2^a\right\rfloor}10^{\left\lfloor 2^{a^{2^2}}\right\rfloor}10^{\left\lfloor2^{a^{3^2}}\right\rfloor}1\cdots10^{\left\lfloor2^{a^{k^2}}\right\rfloor}1\cdots.$$ The infinite string is a $\beta$-expansion of some $x\in[0,1)$. Denote this $x$ by $x_a$. Then, we can obtain a correspondence: $$\Phi(a)\mapsto x_a$$ from $\{a: a>1\}$ to $\{x_a\}_{a>1}$. One can check $\nu_\beta(x_a)=\infty$ and $\hat{\nu}_\beta(x_a)=1$. Therefore, $$\{x_a \}_{a>1}\subseteq\{x\in[0,1]:\nu_\beta(x)=\infty\},\quad\{x_a\}_{a>1}\subseteq\{x\in[0,1]:\hat{\nu}_\beta(x)=1\}.$$ 
   
   For different $a_1>1$ and $a_2>1$, there is a $k_0\in\mathbb{N}^+$ such that $$\left|2^{a_1^{k^2}}-2^{a_2^{k^2}}\right|>1, \text{ for any }k\geq k_0.$$ Thus, $\Psi(a_1)\neq\Psi(a_2)$. Then, $\Phi(a_1)\neq\Phi(a_2)$. Hence, the cardinality of $\{a: a>1\}$ is less than or equal to that of $\{x_a\}_{a>1}$. Similarly, the cardinality of $\{x_a: a>1\}$ is less than or equal to that of $\{x\in[0,1]:\nu_\beta(x)=\infty\}$ ($\{x\in[0,1]:\hat{\nu}_\beta(x)=1\}$). Since $\{a: a>1\}$ is uncountable, $\{x\in[0,1]:\nu_\beta(x)=\infty\}$ and $\{x\in[0,1]:\hat{\nu}_\beta(x)=1\}$ are uncountable.      
\end{proof}   
  
\begin{proof}[Proof of Theorem \ref{Special-case}]
   $(1)$ If $\underline{v}_1=\overline{v}_1=\underline{v}_2=\overline{v}_2=0$, then $$\lim_{n\rightarrow\infty}\dfrac{-\log_\beta\psi_1(n)}{n}=0,\qquad\lim_{n\rightarrow\infty}\dfrac{-\log_\beta\psi_2(n)}{n}=0.$$ Thus, for any positive integer $m$ large enough, there is an integer $n_0>0$ such that $$\beta^{-n/m}\leq\psi_1(n),\quad \beta^{-n/m}\leq\psi_2(n),\quad \text{for any $n\geq n_0$}.$$ Therefore, $$\{x\in[0,1]:\nu_\beta(x)\geq1/m\}\subseteq\mathcal{L}(\psi_1),\quad\{x\in[0,1]:\hat{\nu}_\beta(x)=1/m\}\subseteq\mathcal{U}(\psi_2).$$ By the fact $\{x\in[0,1]:\hat{\nu}_\beta(x)=1/m\}\subseteq\{x\in[0,1]:\nu_\beta(x)\geq1/m\}$, one has $$\{x\in[0,1]:\hat{\nu}_\beta(x)=1/m\}\subseteq\mathcal{L}(\psi_1)\cap\mathcal{U}(\psi_2).$$ According to \cite[Theorem 1.5]{YLiao2016}, we have $$1=\sup_{m\in\mathbb{N}^+}{\rm dim}_H\left(\{x\in[0,1]:\hat{\nu}_\beta(x)=1/m\}\right)\leq{\rm dim}_H\left(\mathcal{L}(\psi_1)\cap\mathcal{U}(\psi_2)\right).$$ Thus, ${\rm dim}_H\left(\mathcal{L}(\psi_1)\cap\mathcal{U}(\psi_2)\right)=1.$
	
   $(2)$ If $\underline{v}_2=\infty$, then $$\lim_{n\rightarrow\infty}\dfrac{-\log_\beta\psi_2(n)}{n}=\infty.$$ For any $L>0$ large enough, there is an integer $n_0$ such that $$\psi_2(n)\leq\beta^{-nL},~\text{ for any $n\geq n_0$}.$$ Therefore, for any $x\in\mathcal{U}(\psi_2)$, one has $\hat{\nu}_\beta(x)\geq L$. By the arbitrariness of $L$, $\hat{\nu}_\beta(x)=\infty$. Thus, $\mathcal{U}(\psi_2)\subseteq\{x\in[0,1]:\hat{\nu}_\beta(x)=\infty\}$. By  Lemma \ref{infinite} (2), $$\mathcal{U}(\psi_2)=\cup_{n=1}^{\infty}\cup_{\omega\in\Sigma^n_\beta}\{x\in[0,1]:d_\beta(x)=(\omega,0^\infty)\}=\{x\in[0,1]:\hat{\nu}_\beta(x)=\infty\}.$$ According to Lemma \ref{infinite} (1), $$\mathcal{L}(\psi_1)\cap\mathcal{U}(\psi_2)=\{x\in[0,1]:\hat{\nu}_\beta(x)=\infty\}.$$ Since $\{x\in[0,1]:\hat{\nu}_\beta(x)=\infty\}$ is countable, $\mathcal{L}(\psi_1)\cap\mathcal{U}(\psi_2)$ is countable.
	
   $(3)$ If $\underline{v}_1=\infty$, by {\bf Theorem SW}, $${\rm dim}_H(\mathcal{L}(\psi_1)\cap\mathcal{U}(\psi_2))\leq{\rm dim}_H(\mathcal{L}(\psi_1))=0.$$	 
\end{proof}  
 
   Now, we prove Theorem \ref{Nonspecial}. We divide the proof into three propositions as Propositions \ref{Thm3.1}, \ref{Thm3.2}, \ref{Thm3.3}.  
\begin{Proposition}\label{Thm3.1}
   For any $0\leq\underline{v}_2\leq\overline{v}_2<\infty$, one has $${\rm dim}_H\left( \mathcal{L}(\psi_1)\cap\mathcal{U}(\psi_2)\right)\leq\dfrac{1}{1+\overline{v}_2}.$$
\end{Proposition}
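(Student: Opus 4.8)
The plan is to establish the upper bound
$$
\dim_H\bigl(\mathcal{L}(\psi_1)\cap\mathcal{U}(\psi_2)\bigr)\le\frac{1}{1+\overline{v}_2}
$$
via a covering argument driven entirely by the uniform condition $\mathcal{U}(\psi_2)$; the set $\mathcal{L}(\psi_1)$ only enters through the inclusion $\mathcal{L}(\psi_1)\cap\mathcal{U}(\psi_2)\subseteq\mathcal{U}(\psi_2)$, so it suffices to bound $\dim_H(\mathcal{U}(\psi_2))$. The starting observation is that by the definition of $\overline{v}_2$ there is a sequence $N_k\to\infty$ with $-\log_\beta\psi_2(N_k)/N_k\to\overline{v}_2$, so that for any small $\varepsilon>0$ one has $\psi_2(N_k)\le\beta^{-N_k(\overline{v}_2-\varepsilon)}$ for all large $k$. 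First I would fix such an $N=N_k$ and unwind what membership in $\mathcal{U}(\psi_2)$ forces: for every large $N$, every $x\in\mathcal{U}(\psi_2)$ has some $n\in[0,N]$ with $T_\beta^n x<\psi_2(N)$, i.e. the orbit lands in the short interval $[0,\psi_2(N))$ at some time before $N$.

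The key step is to convert this dynamical constraint into an efficient cover by basic intervals. For a fixed large $N$, I would cover $\mathcal{U}(\psi_2)$ by the collection of basic intervals $I_n(\omega)$ of order $N$ whose prefix records an orbit visit to $[0,\psi_2(N))$: concretely, for each $n\in[0,N]$, the condition $T_\beta^n x<\psi_2(N)$ means that, reading the $\beta$-expansion, the block starting at position $n+1$ begins with roughly $\log_\beta(1/\psi_2(N))\approx N(\overline{v}_2-\varepsilon)$ consecutive zeros. Using Theorem~\ref{cardinality} to count admissible words and Lemma~\ref{length} together with the full-interval length estimates of Propositions~\ref{full} and \ref{fullc}, the number of order-$N$ basic intervals needed is at most about $\beta^{N}\cdot\beta^{-N(\overline{v}_2-\varepsilon)}\cdot N$ (the factor $N$ from the choice of visit time $n\in[0,N]$, the factor $\beta^{-N(\overline{v}_2-\varepsilon)}$ from forcing a long zero-block), while each such interval has length comparable to $\beta^{-N}$. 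Forming the $s$-dimensional Hausdorff sum over this cover gives, up to the polynomial factor $N$ and constants from Lemma~\ref{length},
$$
\sum |I_N(\omega)|^{s}\;\lesssim\; N\cdot\beta^{N(1-\overline{v}_2+\varepsilon)}\cdot\beta^{-Ns}.
$$

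I would then choose $s=\tfrac{1}{1+\overline{v}_2}+\delta$ and check that the exponent $N\bigl(1-\overline{v}_2+\varepsilon-s\bigr)$ tends to $-\infty$ along $N=N_k\to\infty$ once $\varepsilon$ is taken small relative to $\delta$; the polynomial prefactor $N$ is harmless. Letting $N_k\to\infty$ shows $\mathcal{H}^s(\mathcal{U}(\psi_2))=0$, hence $\dim_H(\mathcal{U}(\psi_2))\le\tfrac{1}{1+\overline{v}_2}+\delta$, and then $\delta\to0$ gives the claim. I expect the main obstacle to be the counting step: getting a clean bound on the number of admissible order-$N$ prefixes that force a zero-block of length $\approx N(\overline{v}_2-\varepsilon)$ somewhere before position $N$, while simultaneously controlling the lengths of the (possibly non-full) basic intervals. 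The length control is exactly where Lemma~\ref{length} and the full-interval machinery of Propositions~\ref{full} and \ref{fullc} must be invoked carefully, since non-full cylinders can be as short as $\beta^{-(N+N_\ast)}$; one must ensure these discrepancies are absorbed by the $\beta_N\uparrow\beta$ approximation and contribute only sub-exponential corrections that vanish after taking $\varepsilon,\delta\to0$.
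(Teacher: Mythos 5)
Your overall strategy (discard $\mathcal{L}(\psi_1)$, pick times $N_k$ with $\psi_2(N_k)\le\beta^{-N_k(\overline{v}_2-\varepsilon)}$, and cover $\mathcal{U}(\psi_2)$ by cylinders recording the forced visit to $[0,\psi_2(N_k))$) can be made to work, but your counting step contains a genuine error. You cover by cylinders of order $N$ and claim that the visit at time $n\in[0,N]$ forces a zero-block of length $M\approx N(\overline{v}_2-\varepsilon)$ \emph{inside the first $N$ digits}, cutting the count down to about $N\beta^{N-M}$. This fails whenever $n>N-M$: the forced zeros occupy positions $n+1,\dots,n+M$, which then extend past position $N$, so the order-$N$ prefix is constrained by at most $N-n$ zeros. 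In the extreme case $n=N$ the first $N$ digits are completely free, so your proposed collection either is not a cover of $\mathcal{U}(\psi_2)$ or must contain essentially all $\beta^N$ order-$N$ cylinders. The resulting exponent $1-\overline{v}_2+\varepsilon$ is in fact false as a dimension bound: in Example \ref{Exa1} one has $\overline{v}_2=3$, so your sum would converge already at $s=0$, yet the set there has Hausdorff dimension $1/4=\frac{1}{1+\overline{v}_2}$. The repair is to use, for each visit time $n$, cylinders of order $n+M$ of the form $I_{n+M}(\omega_1,\dots,\omega_n,0^M)$; by Theorem \ref{cardinality} there are at most $\beta^{n+1}/(\beta-1)$ of them and each has length at most $\beta^{-(n+M)}$, so the $s$-sum is dominated by the worst case $n=N$, namely $\beta^{N}\cdot\beta^{-N(1+\overline{v}_2-\varepsilon)s}$, which yields exactly the critical exponent $\frac{1}{1+\overline{v}_2-\varepsilon}$ and hence the claimed bound as $\varepsilon\to0$. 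Note that it is precisely this worst case $n=N$ that makes the answer $\frac{1}{1+\overline{v}_2}$ rather than anything smaller.

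For comparison, the paper avoids the covering computation altogether: arguing as in Proposition \ref{Pro}, it extracts from the uniform condition infinitely many distinct times $j$ with $T_\beta^{j}x<\psi_2(N)\le\beta^{-N(\overline{v}_2-\varepsilon)}\le\beta^{-j(\overline{v}_2-\varepsilon)}$ (using $j\le N$), so that $\mathcal{U}(\psi_2)$ embeds into the asymptotic set $\{x:T_\beta^{n}x<\beta^{-n(\overline{v}_2-\varepsilon)}\text{ i.o.}\}$, and then quotes {\bf Theorem SW} to get $\frac{1}{1+\overline{v}_2-\varepsilon}$. If you prefer to keep a self-contained covering argument, the corrected count above is exactly the covering hidden inside the proof of {\bf Theorem SW}; either way, the key inequality you must exploit is $\beta^{-N(\overline{v}_2-\varepsilon)}\le\beta^{-j(\overline{v}_2-\varepsilon)}$ for $j\le N$, not containment of the zero-block in the first $N$ digits.
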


\begin{proof}
   By the definition of $\overline{v}_2$, take a subsequence $\{n_k:k\geq1\}$ such that $$\lim_{k\rightarrow\infty}\dfrac{-\log_\beta\psi_2(n_k)}{n_k}=\overline{v}_2.$$ Then, for any $\varepsilon>0$, there is an integer $k_0$ such that $$\beta^{-n_k(\overline{v}_2+\varepsilon)}\leq\psi_2(n_k)\leq\beta^{-n_k(\overline{v}_2-\varepsilon)},~\text{for any $k\geq k_0$}.$$ For every $x\in\mathcal{U}(\psi_2)$, by the same argument as Proposition \ref{Pro}, we have $$\mathcal{U}(\psi_2)\subseteq\left\{x\in[0,1]:T_\beta^n x <\beta^{-n(\overline{v}_2-\varepsilon)},\text{ for infinitely many $n\in\mathbb{N}$}\right\}.$$ By \cite[Theorem 1.1]{SW2013}, ${\rm dim}_H\left(\mathcal{U}(\psi_2)\right)\leq\dfrac{1}{1+\overline{v}_2}$. Then, $${\rm dim}_H\left( \mathcal{L}(\psi_1)\cap\mathcal{U}(\psi_2)\right)\leq{\rm dim}_H\left(\mathcal{U}(\psi_2)\right)\leq\dfrac{1}{1+\overline{v}_2}.$$     
\end{proof}   
   
\begin{Proposition}\label{Thm3.2}
   If $\underline{v}_2>1$, then the set $\mathcal{L}(\psi_1)\cap\mathcal{U}(\psi_2)$ is countable. If $\underline{v}_1/(2+\underline{v}_1)<\underline{v}_2\leq1$, then $${\rm dim}_H \left(\mathcal{L}(\psi_1)\cap\mathcal{U}(\psi_2)\right)\leq\left(\dfrac{1-\underline{v}_2}{1+\underline{v}_2}\right)^2.$$ If $\underline{v}_2\leq \underline{v}_1/(2+\underline{v}_1)$, then $${\rm dim}_H\left( \mathcal{L}(\psi_1)\cap\mathcal{U}(\psi_2)\right)\leq \dfrac{\underline{v}_1-\underline{v}_2-\underline{v}_1\cdot\underline{v}_2}{(1+\underline{v}_1)(\underline{v}_1-\underline{v}_2)}.$$     
\end{Proposition}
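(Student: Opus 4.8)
The plan is to contain the set in an exponent super-level set and read off its dimension from \textbf{Theorem BL}. By Lemma \ref{SET}(2) (whose containment only uses the liminf values $\underline v_1,\underline v_2$),
$$\mathcal L(\psi_1)\cap\mathcal U(\psi_2)\subseteq P:=\{x\in[0,1]:\nu_\beta(x)\ge\underline v_1\}\cap\{x\in[0,1]:\hat\nu_\beta(x)\ge\underline v_2\}.$$
For the first assertion, suppose $\underline v_2>1$ and fix $v'$ with $1<v'<\underline v_2$; then $\psi_2(n)\le\beta^{-nv'}$ for all large $n$, so every $x\in\mathcal U(\psi_2)$ has $\hat\nu_\beta(x)\ge v'>1$. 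By Lemma \ref{empty} there is no point with $1<\hat\nu_\beta<\infty$, hence $\hat\nu_\beta(x)=\infty$, and by Lemma \ref{infinite}(2) the set of such points is countable; therefore $\mathcal L(\psi_1)\cap\mathcal U(\psi_2)$ is countable.

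For the two dimension bounds I may assume $\overline v_2<\infty$, since otherwise Proposition \ref{Pro} already gives dimension $0$. I first strip the trivial pieces off $P$: the part $P\cap\{\hat\nu_\beta=\infty\}$ is countable by Lemma \ref{infinite}, and $P\cap\{\nu_\beta=\infty\}\subseteq\{x:\nu_\beta(x)=\infty\}$ has dimension $0$ by \textbf{Theorem SW}. On the remaining part both exponents are finite, so Lemma \ref{empty} forces $\hat\nu_\beta(x)\in[\underline v_2,1]$, while $\nu_\beta(x)\in[\underline v_1,\infty)$. \textbf{Theorem BL} provides, for each admissible pair, ${\rm dim}_H(\{\nu_\beta=v\}\cap\{\hat\nu_\beta=\hat v\})=d(v,\hat v):=\tfrac{v-\hat v-v\hat v}{(1+v)(v-\hat v)}$, the set being empty unless $v\ge\hat v/(1-\hat v)$. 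The goal of this step is the estimate
$${\rm dim}_H P\le\sup\Big\{d(v,\hat v):v\ge\underline v_1,\ \underline v_2\le\hat v\le1,\ v\ge\tfrac{\hat v}{1-\hat v}\Big\}.$$

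The supremum is a calculus exercise. For fixed $\hat v$, $d(\cdot,\hat v)$ vanishes at the admissibility boundary $v=\hat v/(1-\hat v)$ and as $v\to\infty$, with a unique interior maximum at $v^\ast(\hat v)=2\hat v/(1-\hat v)$ where $d(v^\ast,\hat v)=\big(\tfrac{1-\hat v}{1+\hat v}\big)^2=:g(\hat v)$; moreover $\partial d/\partial\hat v<0$ and $g$ is decreasing. The constraint $v\ge\underline v_1$ fails to bind precisely when $v^\ast(\hat v)\ge\underline v_1$, i.e. $\hat v\ge\underline v_1/(2+\underline v_1)$. Hence if $\underline v_1/(2+\underline v_1)<\underline v_2\le1$ the optimum over $v$ is unconstrained for every $\hat v\ge\underline v_2$ and the supremum equals $g(\underline v_2)=\big(\tfrac{1-\underline v_2}{1+\underline v_2}\big)^2$; if instead $\underline v_2\le\underline v_1/(2+\underline v_1)$ the constraint binds and, as $d(\underline v_1,\cdot)$ is decreasing, the supremum is attained at the corner $(\underline v_1,\underline v_2)$ and equals $d(\underline v_1,\underline v_2)=\tfrac{\underline v_1-\underline v_2-\underline v_1\underline v_2}{(1+\underline v_1)(\underline v_1-\underline v_2)}$. (One checks this corner is admissible since $\underline v_2\le\underline v_1/(2+\underline v_1)\le\underline v_1/(1+\underline v_1)$, and that the two expressions agree at the threshold $\underline v_2=\underline v_1/(2+\underline v_1)$.) Together with the monotonicity of Hausdorff dimension this gives the two stated upper bounds.

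The genuine obstacle is the passage from the \emph{level} sets of \textbf{Theorem BL} to the \emph{super-level} set $P$: the decomposition $P=\bigcup_{v,\hat v}\big(\{\nu_\beta=v\}\cap\{\hat\nu_\beta=\hat v\}\big)$ is an uncountable union, so one cannot simply pass to the supremum of dimensions. Having removed the countable and zero-dimensional pieces, the finite-exponent region is compact (after the further increasing union over $\{\nu_\beta\le m\}$, $m\in\mathbb N$), so I would cover it by a finite $\varepsilon$-net in the $(v,\hat v)$-parameters and use the continuity of $d$ to bound each small box by a level-set estimate; equivalently, one re-runs the cylinder-covering argument of \cite{YLiao2016} directly on $P$, where $\nu_\beta\ge\underline v_1$ and $\hat\nu_\beta\ge\underline v_2$ translate into lower bounds on the lengths and spacings of the blocks of zeros in the $\beta$-expansion and optimizing the cover reproduces $\sup d(v,\hat v)$. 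This covering step, not the optimization, is where the real work lies.
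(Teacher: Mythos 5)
Your containment via Lemma \ref{SET}(2), your treatment of the case $\underline{v}_2>1$ (via Lemma \ref{empty} and Lemma \ref{infinite}(2)), and your optimization of $d(v,\hat v)$ over the admissible region all match the paper and are correct. But the proof has a genuine gap exactly where you place it yourself: the passage from the level-set dimensions of \textbf{Theorem BL} to an upper bound for the super-level set $P$. Theorem BL, as stated, computes ${\rm dim}_H\left(\{\nu_\beta=v\}\cap\{\hat\nu_\beta=\hat v\}\right)$ for a single pair $(v,\hat v)$; it gives no bound whatsoever for a set of the form $\{v\le\nu_\beta\le v+\varepsilon\}\cap\{\hat\nu_\beta\ge\underline{v}_2\}$, which is itself an uncountable union of level sets. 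So your $\varepsilon$-net reduction does not close the argument: ``continuity of $d$'' lets you compare the \emph{values} $d(v_i,\hat v_j)$ across a small box, but you still need an independent covering estimate showing that the dimension of each box set is at most $\sup_{\text{box}} d+o(1)$, and that estimate is not a consequence of Theorem BL. Deferring it to ``re-run the cylinder-covering argument of \cite{YLiao2016}'' is deferring the entire content of the proposition.

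That covering estimate is precisely what the paper's proof supplies, and it is not a routine repetition of \cite{YLiao2016} because the hypothesis available here is $x\in\mathcal{U}(\psi_2)$ rather than an exact value of $\hat\nu_\beta$. Concretely, for $x$ with $v_\beta\le\nu_\beta(x)<v_\beta+1/L$ one extracts from the $\beta$-expansion the maximal zero-blocks $(n_k,m_k)$, proves the Claim that $\nu_\beta(x)=\limsup_k (m_k-n_k)/n_k$ and that membership in $\mathcal{U}(\psi_2)$ forces $\liminf_k(m_k-n_k)/n_{k+1}\ge\underline{v}_2$, then counts: the zero-blocks occupy at least $n_k\left(\frac{v_\beta\underline{v}_2}{v_\beta-\underline{v}_2}-\varepsilon_1\right)$ of the first $n_k$ digits, the remaining ``free'' digits admit at most $\left(\frac{\beta}{\beta-1}\right)^k\beta^{\sum l_i}$ choices by Theorem \ref{cardinality}, and the resulting cover by basic intervals of length $\beta^{-m_k}$ has critical exponent $\frac{v_\beta-\underline{v}_2-v_\beta\underline{v}_2}{(1+v_\beta)(v_\beta-\underline{v}_2)}$ up to an $O(1/L)$ error. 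Only then does the countable union over $N$ and $i\in\{1,\dots,L\}$, followed by $L\to\infty$ and the optimization you carried out, yield the stated bounds. Without this block-counting step your argument establishes the right answer to a calculus problem but not the dimension inequality.
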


\begin{proof}
   By Lemma \ref{SET} $(2)$, $$\mathcal{L}(\psi_1)\cap\mathcal{U}(\psi_2)\subseteq\{x\in[0,1]:\nu_\beta(x)\geq\underline{v}_1\}\cap\{x\in[0,1]:\hat{\nu}_\beta(x)\geq\underline{v}_2\}.$$ The argument on the upper bound of the Hausdorff dimension of the set $\mathcal{L}(\psi_1)\cap\mathcal{U}(\psi_2)$ can be obtained by a natural covering of the set $$\{x\in[0,1]:\nu_\beta(x)\geq \underline{v}_1\}\cap\{x\in[0,1]:\hat{\nu}_\beta(x)\geq\underline{v}_2\}.$$ According to Theorem \ref{Special-case} $(2)$ and $(3)$, we only need to consider the case $\nu_\beta(x)\in[\underline{v}_1,\infty)$ and $\hat{\nu}_\beta(x)\in[\underline{v}_2,\infty)$. For any $x\in\mathcal{L}(\psi_1)\cap\mathcal{U}(\psi_2)$, there is a number $v_\beta\in[\underline{v}_1,\infty)$ such that $$x\in\mathbb{B}:=\{x\in[0,1]:\nu_\beta(x)=v_\beta\}\cap\{x\in[0,1]:\hat{\nu}_\beta(x)\geq\underline{v}_2\}.$$ Denote its $\beta$-expansion by $$x=\dfrac{a_1}{\beta}+\dfrac{a_2}{\beta^2}+\cdots+\dfrac{a_n}{\beta^n}+\cdots,$$ where $a_i\in\{0,\cdots,\lceil\beta\rfloor\}$, for all $i\geq1$. Since $\nu_\beta(x)<\infty$, $T_\beta^nx>0$ for all $n\geq0$. By the same way as Lemma \ref{infinite}, we take the maximal subsequences $\{n_k:k\geq1\}$ and $\{m_k:k\geq1\}$ of $\left\{n'_i:i\geq 1\right\}$ and $\left\{m'_i:i\geq1\right\}$, respectively. Notice $\beta^{n_k-m_k}<T^{n_k}_\beta x<\beta^{n_k-m_k+1}$. We have the following claim.
   
\begin{claim}
   \quad $v_\beta=\limsup\limits_{k\rightarrow\infty}\dfrac{m_k-n_k}{n_k},\quad \underline{v}_2\leq\liminf\limits_{k\rightarrow\infty}\dfrac{m_k-n_k}{n_{k+1}}.$
\end{claim}
   
\begin{proof}[Proof of Claim]
   Without loss of generality, we assume $$\limsup\limits_{k\rightarrow\infty}\dfrac{m_k-n_k}{n_k}=c_1,\quad\liminf\limits_{k\rightarrow\infty}\dfrac{m_k-n_k}{n_{k+1}}=c_2.$$ First, we show $v_\beta=c_1$. For any $\varepsilon>0$, there is an integer $k_0>0$ such that $$m_k-n_k\leq n_k(c_1+\varepsilon),\quad\text{for any $k\geq k_0$}.$$ Since $T^{n_k}_\beta x>\beta^{n_k-m_k}$, we have $T^{n_k}_\beta x>\beta^{n_k-m_k}\geq\beta^{-n_k(c_1+\varepsilon)}$. In general, for any $n\geq n_{k_0}$, there is an integer $k\geq k_0$ such that $n_k\leq n<n_{k+1}$. By the choice of $\{n_k\}$, we have $$T^n_\beta x>T^{n_k}_\beta x>\beta^{-n_k(c_1+\varepsilon)}>\beta^{-n(c_1+\varepsilon)}.$$ It means $v_\beta=v_\beta(x)<c_1+\varepsilon$. On the other hand, by the definition of $c_1$, taking subsequence $\{n_{k_i}\}$ and $\{m_{k_i}\}$ such that $$\dfrac{m_{k_i}-n_{k_i}}{n_{k_i}}\geq c_1-\varepsilon,$$ one has $T^{n_{k_i}}_\beta x<\beta^{n_{k_i}-m_{k_i}+1}\leq\beta^{-n_{k_i}(c_1-\varepsilon)+1}$. Thus, $v_\beta=\nu_\beta(x)\geq c_1-\varepsilon$. By the arbitrariness of $\varepsilon$, $v_\beta=c_1$.
   
   Next, we will prove $\underline{v}_2\leq c_2$. By the definition of $\underline{v}_2$, for any $\varepsilon>0$, there is an integer $n_0=n_0(\varepsilon)>0$ such that  $$\psi_2(n)\leq\beta^{-n(\underline{v}_2-\varepsilon)},\quad\text{for any $n\geq n_0$}.$$ By the definition of $c_2$, one can take a subsequence $\{k_i:i\geq 1\}$ such that $$\lim_{i\rightarrow\infty}\dfrac{m_{k_i}-n_{k_i}}{n_{k_{i+1}}}=c_2.$$ For the above $\varepsilon>0$, there is an integer $i_0=i_0(\varepsilon)>0$ such that $$ m_{k_i}-n_{k_i}\leq n_{k_{i+1}}(c_2+\varepsilon),\quad\text{for any $i\geq i_0$}.$$ By contrary, suppose $c_2<\underline{v}_2$. Then, for any $\varepsilon\in(0,(\underline{v}_2-c_2)/4)$ and any integer $J\geq K:=\max\{n_0(\varepsilon),~n_{i_0(\varepsilon)}\}$, there is an integer $n_{k_{i+1}}>J$ such that for any integer $n\in[1,n_{k_{i+1}}]$, one has $$T^n_\beta x>T^{n_{k_i}}_\beta x>\beta^{n_{k_i}-m_{k_i}}\geq\beta^{-n_{k_{i+1}}(c_2+\varepsilon)}>\beta^{-n_{k_{i+1}}(\underline{v}_2-\varepsilon)}\geq\psi_2(n_{k_{i+1}}).$$ This contracts the fact $x\in\mathcal{U}(\psi_2)$. Therefore, $$\underline{v}_2\leq c_2=\liminf\limits_{k\rightarrow\infty}\dfrac{m_k-n_k}{n_{k+1}}.$$ 
\end{proof}

   Now, we consider
\begin{equation}\label{E2}
   v_\beta=\limsup_{n\rightarrow\infty}\dfrac{m_k-n_k}{n_k}=\limsup_{n\rightarrow\infty}\dfrac{m_k}{n_k}-1,
\end{equation} 
   
\begin{equation}\label{E3}
   \underline{v}_2\leq\liminf_{n\rightarrow\infty}\dfrac{m_k-n_k}{n_{k+1}}\leq\liminf_{n\rightarrow\infty}\dfrac{m_k-n_k}{m_k}=1-\limsup_{n\rightarrow\infty}\dfrac{n_k}{m_k}.
\end{equation}
   Since $\left(\limsup\dfrac{n_k}{m_k}\right)\cdot\left(\limsup\dfrac{m_k}{n_k}\right)\geq 1$, one has 
\begin{equation}\label{in1}
   v_\beta\geq\dfrac{\underline{v}_2}{1-\underline{v}_2},\quad \underline{v}_2\leq\dfrac{v_\beta}{1+v_\beta}.
\end{equation}

   If $\underline{v}_2>1$, then $\underline{v}_2>v_\beta/(1+v_\beta)$, for any $v_\beta\geq v_1$. This contradicts (\ref{in1}). Thus, $\mathbb{B}$ is empty. By Lemma \ref{infinite}, $$\mathcal{L}(\psi_1)\cap\mathcal{U}(\psi_2)=\cup_{n=1}^{\infty}\cup_{\omega\in\Sigma^n_\beta}\{x\in[0,1]:d_\beta(x)=(\omega,0^\infty)\}.$$ Hence, $\mathcal{L}(\psi_1)\cap\mathcal{U}(\psi_2)$ is countable.
   
   If $\underline{v}_2\leq1$, by the inequality (\ref{in1}), for any $v_\beta<\underline{v}_2/(1-\underline{v}_2)$, the set $\mathbb{B}$ is empty. Therefore, we consider the case $v_\beta\geq\underline{v}_2/(1-\underline{v}_2)$. Take a subsequence $\{k_i:i\geq 1\}$ such that the supremum of (\ref{E2}) is obtained. For abbreviation, we continue to write $\{n_k:k\geq1\}$ and $\{m_k:k\geq1\}$ for the subsequence $\{n_{k_i}:i\geq 1\}$ and $\{m_{k_i}:i\geq1\}$, respectively. Given $0<\varepsilon<\underline{v}_2/2$, for $k$ large enough, one has
\begin{equation}\label{E4}
   	(v_\beta-\varepsilon)n_k\leq m_k-n_k\leq(v_\beta+\varepsilon)n_k,
\end{equation}   
\begin{equation}\label{E5}
   	m_k-n_k\geq(\underline{v}_2-\varepsilon)n_{k+1}.
\end{equation}
   By inequality (\ref{E4}), one has $$(1+v_\beta-\varepsilon)m_{k-1}\leq(1+v_\beta-\varepsilon)n_k\leq m_k.$$ Therefore, the sequence $\{m_k:k\geq1\}$ increases at least exponentially. Since $n_k\geq m_{k-1}$ for every $k\geq2$, the sequence $\{n_k:k\geq1\}$ also increases at least exponentially. Thus, there is a positive constant $C$ such that $k\leq C\log_\beta n_k$. Combining (\ref{E4}) and (\ref{E5}), one obtains $$(\underline{v}_2-\varepsilon)n_{k+1}\leq(v_\beta+\varepsilon)n_k.$$ Thus, for $k$ large enough, there is an integer $n_0$ and a postive real number $\varepsilon_1$ small enough such that the sum of all lengths of the blocks of $0$ in the prefix of length $n_k$ of the infinite sequence $a_1a_2\cdots$ is at least equal to 
\begin{eqnarray*}
   	\sum^k_{i=1}(\underline{v}_2-\varepsilon)n_i-n_0&=&n_k(\underline{v}_2-\varepsilon)\left\lgroup \dfrac{1-\left(\dfrac{\underline{v}_2-\varepsilon}{v_\beta+\varepsilon}\right)^k}{1-\left(\dfrac{\underline{v}_2-\varepsilon}{v_\beta+\varepsilon}\right)}\right\rgroup-n_0\\ &\geq& n_k\left(\dfrac{v_\beta \cdot \underline{v}_2}{v_\beta-\underline{v}_2}-\varepsilon_1\right).
\end{eqnarray*} 
   Among the digits $a_1\cdots a_{m_k}$, there are $k$ blocks of digits which are \textquoteleft free\textquoteright. Denote their lengths by $l_1,\cdots,l_k$. Let $\varepsilon_2=\dfrac{(v_\beta-\underline{v}_2-v_\beta\cdot \underline{v}_2)\varepsilon_1}{v_\beta-\underline{v}_2}$, one has $$\sum_{i=1}^kl_i\leq n_k-n_k\left(\dfrac{v_\beta\cdot \underline{v}_2}{v_\beta-\underline{v}_2}-\varepsilon_1\right)= n_k(1+\varepsilon_2)\dfrac{v_\beta-\underline{v}_2-v_\beta\cdot\underline{v}_2}{v_\beta-\underline{v}_2}.$$ By Theorem \ref{cardinality}, there are at most $\beta\cdot\beta^{l_i}/(\beta-1)$ ways to choose the block with length $l_i$. Thus, one has in total at most $$\left(\dfrac{\beta}{\beta-1}\right)^k\cdot\beta^{\sum_{i=1}^kl_i}\leq \left(\dfrac{\beta}{\beta-1}\right)^k\cdot\beta^{n_k(1+\varepsilon_2)(v_\beta-\underline{v}_2-v_\beta\cdot \underline{v}_2)/(v_\beta-\underline{v}_2)}$$ possible choices of the digits $a_1\cdots a_{m_k}$. On the other hand, there are at most $k(k\leq C\log_\beta n_k)$ blocks of $0$ in the prefix of length $n_k$ of the infinite sequence $a_1a_2\cdots$. Since there are at most $n_k$ possible choices for their first index, one has in total at most $(n_k)^{C\log_\beta n_k}$ possible choices. Consequently, the set of those $x\in\mathbb{B}$ is covered by $$\left(\dfrac{\beta n_k}{\beta-1}\right)^{C\log_\beta n_k}\cdot\beta^{n_k(1+\varepsilon_2)(v_\beta-\underline{v}_2-v_\beta\cdot\underline{v}_2)/(v_\beta-\underline{v}_2)}$$ basic intervals of length at most $\beta^{-m_k}$. Moreover, by (\ref{E4}) and by letting $\varepsilon_3=\varepsilon/(1+v_\beta)$, we have $\beta^{-m_k}\leq \beta^{-(1+v_\beta)(1-\varepsilon_3)n_k}$. Set $\varepsilon'=\max\{\varepsilon_2,~\varepsilon_3\}$. The set of those $x$ is covered by $$\left(\dfrac{\beta n_k}{\beta-1}\right)^{C\log_\beta n_k}\cdot\beta^{n_k(1+\varepsilon')(v_\beta-\underline{v}_2-v_\beta\cdot \underline{v}_2)/(v_\beta-\underline{v}_2)}$$ basic intervals of length at most $\beta^{-(1+v_\beta)(1-\varepsilon')n_k}$. We consider the series $$\sum_{N\geq1}(N)^{C\log_\beta N}\beta^{N(1+\varepsilon')(v_\beta-\underline{v}_2-v_\beta\cdot \underline{v}_2)/(v_\beta-\underline{v}_2)}\beta^{-(1+v_\beta)(1-\varepsilon')Ns}.$$ The critical exponent $s_0$ such that the series converges if $s>s_0$ and diverges if $s<s_0$ is given by $$s_0=\dfrac{1+\varepsilon'}{1-\varepsilon'}\cdot\dfrac{v_\beta-\underline{v}_2-v_\beta\cdot\underline{v}_2}{(1+v_\beta)(v_\beta-\underline{v}_2)}.$$ By a standard covering argument and the arbitrariness of $\varepsilon'$, the Hausdorff dimension of $\mathbb{B}':=\{x\in[0,1]: \nu_\beta(x)=\nu_\beta\}\cap\mathcal{U}(\psi_2)$ is at most equal to 
\begin{equation}
   {\rm dim}_H \left(\mathbb{B}'\right)\leq\dfrac{v_\beta-\underline{v}_2-v_\beta\cdot\underline{v}_2}{(1+v_\beta)(v_\beta-\underline{v}_2)}.
\end{equation}
   
   For $v_\beta\geq\underline{v}_2/(1-\underline{v}_2)$, fix $L$ large enough. We consider the set $$\mathbb{D}:=\{x\in[0,1]:v_\beta\leq \nu_\beta(x)<v_\beta+1/L\}\cap\mathcal{U}(\psi_2).$$ Repeat the above discussion, if $\underline{v}_2<1$, then $${\rm dim}_H \left(\mathbb{D}\right)\leq\dfrac{v_\beta-\underline{v}_2-v_\beta\cdot v_2}{(1+v_\beta)(v_\beta-\underline{v}_2)}+\dfrac{\underline{v}_2^2/L}{1-\underline{v}_2}.$$ 
   
   If $\underline{v}_2=1$, then $v_\beta=+\infty$. By {\bf Theorem SW}, ${\rm dim}_H \left(\mathbb{D}\right)=0$. If $\underline{v}_2<1$, since $\mathcal{L}(\psi_1)\cap\mathcal{U}(\psi_2)$ is a subset of  $$\cup_{N=0}^{+\infty}\cup^L_{i=1}\left\{x\in[0,1]:v_1+N+(i-1)/L\leq\nu_\beta(x)<v_1+N+i/L\right\}\cap\mathcal{U}(\psi_2),$$ let $L$ tend to $+\infty$, one has $${\rm dim}_H\left( \mathcal{L}(\psi_1)\cap\mathcal{U}(\psi_2)\right)\leq\sup_{v_\beta\geq \underline{v}_2/(1-\underline{v}_2)}\left\{\dfrac{v_\beta-\underline{v}_2-v_\beta\cdot \underline{v}_2}{(1+v_\beta)(v_\beta-\underline{v}_2)}\right\}.$$ Regard the right side as a function of $v_\beta$, if $\underline{v}_1/(2+\underline{v}_1)<\underline{v}_2$, then the maximum is attained for $v_\beta=2\underline{v}_2/(1-\underline{v}_2)$. Therefore, $${\rm dim}_H \left(\mathcal{L}(\psi_1)\cap\mathcal{U}(\psi_2)\right)\leq\left(\dfrac{1-\underline{v}_2}{1+\underline{v}_2}\right)^2.$$ If $\underline{v}_2\leq\underline{v}_1/(2+\underline{v}_1)$, then the maximum is attained for $v_\beta=\underline{v}_1$. Thus, $${\rm dim}_H\left( \mathcal{L}(\psi_1)\cap\mathcal{U}(\psi_2)\right)\leq \dfrac{\underline{v}_1-\underline{v}_2-\underline{v}_1\cdot\underline{v}_2}{(1+\underline{v}_1)(\underline{v}_1-\underline{v}_2)}.$$           
\end{proof}

\begin{Proposition}\label{Thm3.3}
   If $\overline{v}_1/(2+\overline{v}_1)<\overline{v}_2\leq1$, then $${\rm dim}_H \left(\mathcal{L}(\psi_1)\cap\mathcal{U}(\psi_2)\right)\geq\left(\dfrac{1-\overline{v}_2}{1+\overline{v}_2}\right)^2.$$ If $\overline{v}_2\leq\overline{v}_1/(2+\overline{v}_1)$, then $${\rm dim}_H\left( \mathcal{L}(\psi_1)\cap\mathcal{U}(\psi_2)\right)\geq\dfrac{\overline{v}_1-\overline{v}_2-\overline{v}_1\cdot\overline{v}_2}{(1+\overline{v}_1)(\overline{v}_1-\overline{v}_2)}.$$ 
\end{Proposition}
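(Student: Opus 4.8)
The plan is to construct, for each admissible pair of exponents, an explicit Cantor-type subset of $\mathcal{L}(\psi_1)\cap\mathcal{U}(\psi_2)$ and bound its Hausdorff dimension from below via a standard mass-distribution argument. By Lemma \ref{SET}$(1)$, it suffices to build points $x$ with $\nu_\beta(x)>\overline{v}_1$ and $\hat{\nu}_\beta(x)>\overline{v}_2$; in fact the natural target is to realize the \emph{extremal} configuration that achieves equality in the chain of inequalities $(\ref{in1})$. Concretely, I would fix a parameter $v\ge\overline{v}_2/(1-\overline{v}_2)$ (the optimal choice being $v=2\overline{v}_2/(1-\overline{v}_2)$ in the first regime and $v=\overline{v}_1$ in the second) and prescribe a sequence of indices $n_k<m_k<n_{k+1}$ growing exactly exponentially, with $m_k-n_k\approx v\,n_k$ and $m_k-n_k\approx\overline{v}_2\,n_{k+1}$, mirroring the structure forced on any point of $\mathbb{B}$ in Proposition \ref{Thm3.2}.

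The construction itself proceeds blockwise along the $\beta$-expansion. On each ``free'' block I let the digits range over all words that keep the sequence $\beta_N$-admissible for a suitable $N=N(k)$ slackened below $\beta$, so that Lemma \ref{length} and Proposition \ref{full} let me control cylinder lengths by $\beta^{-(\text{length})}$ up to the bounded factor $\beta^{N}$; between consecutive free blocks I insert a run of zeros of length $m_k-n_k$, which simultaneously (i) forces $T^{n_k}_\beta x<\beta^{-(m_k-n_k)}\approx\beta^{-v n_k}$, giving $\nu_\beta(x)\ge v>\overline{v}_1$, and hence $x\in\mathcal{L}(\psi_1)$; and (ii) guarantees that for every large $N$ there is a return time $n\le N$ with $T^n_\beta x<\psi_2(N)$, because the long zero-run that begins at $n_k$ makes $T^{n_k}_\beta x$ small relative to $\beta^{-\overline{v}_2 N}$ for all $N$ up to $n_{k+1}$, yielding $\hat{\nu}_\beta(x)\ge\overline{v}_2$ and $x\in\mathcal{U}(\psi_2)$. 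The verification that both Diophantine conditions hold along the \emph{whole} sequence of scales, not merely along $\{n_k\}$, is exactly the bookkeeping carried out in the Claim inside Proposition \ref{Thm3.2}, run in reverse.

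For the dimension lower bound I would define a probability measure $\mu$ on the Cantor set by distributing mass uniformly over the admissible choices in each free block: a basic interval of generation $k$ receives mass proportional to $\beta^{-\sum_{i\le k} l_i}$ up to the $(\beta/(\beta-1))^k$ and $(n_k)^{C\log_\beta n_k}$ combinatorial factors that already appeared in Proposition \ref{Thm3.2}. The local dimension estimate then reduces to computing $\liminf_{k}\frac{\log\mu(I)}{\log|I|}$, and because the same geometric series $\sum_i l_i\approx n_k\frac{v-\overline{v}_2-v\overline{v}_2}{v-\overline{v}_2}$ governs both the measure and the length $|I|\le\beta^{-m_k}\approx\beta^{-(1+v)n_k}$, the ratio converges to $\frac{v-\overline{v}_2-v\overline{v}_2}{(1+v)(v-\overline{v}_2)}$; optimizing over the admissible $v$ gives the two stated bounds. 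The main obstacle I anticipate is \textbf{not} the measure computation but the admissibility constraint: because a word is $\beta$-admissible only when it is lexicographically dominated by $d_\beta(1)$ at every shift, I cannot freely concatenate the free blocks with the zero-runs. I would circumvent this exactly as in \cite{SW2013} and \cite{YLiao2016}, replacing $\beta$ by the approximants $\beta_N$ from $(\ref{ED1})$, working inside $\Sigma^n_{\beta_N}\subseteq\Sigma^n_\beta$ where concatenation after a full cylinder is unrestricted (Proposition \ref{full}$(3)$), and sending $N\to\infty$ at the end so that the $\beta^{N}$ length distortion and the $\log_\beta$ entropy loss both vanish in the dimension.
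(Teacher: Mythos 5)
Your proposal follows essentially the same route as the paper's proof: a Cantor-type subset built from prescribed zero-runs over indices $n_k\approx(v/\overline{v}_2)^k$, $m_k\approx(1+v)n_k$ with free blocks filled by $\Sigma_{\beta_N}$-admissible words, a uniformly distributed Bernoulli measure, the Mass Distribution Principle, the limit $N\to\infty$, and optimization over $v\in\{2\overline{v}_2/(1-\overline{v}_2),\overline{v}_1\}$. The only cosmetic differences are that the paper additionally inserts marker digits at the positions $m_k+i(m_k-n_k)$ and that the combinatorial factors $(\beta/(\beta-1))^k$ and $(n_k)^{C\log_\beta n_k}$ you mention belong to the upper-bound covering argument rather than to this measure construction, neither of which affects correctness.
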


\begin{proof}
   By Lemma \ref{SET} $(1)$, $$\{x\in[0,1]:\nu_\beta(x)>\overline{v}_1\}\cap\{x\in[0,1]:\hat{\nu}_\beta(x)>\overline{v}_2\}\subseteq\mathcal{L}(\psi_1)\cap\mathcal{U}(\psi_2).$$ If $\overline{v}_2=1$, then ${\rm dim}_H \left(\mathcal{L}(\psi_1)\cap\mathcal{U}(\psi_2)\right)\geq0$ always holds. If $\overline{v}_2<1$, then we fix $\delta>0$ with $\overline{v}_2+\delta<1$, we consider the lower bound of the Hausdorff dimension of the set $$\mathbb{F}:=\{x\in[0,1]:\nu_\beta(x)=v_\beta+\delta\}\cap\{x\in[0,1]:\hat{\nu}_\beta(x)\geq\overline{v}_2+\delta\},$$ where $v_\beta\geq\overline{v}_1$ is a real number. By {\bf Theorem BL}, if $\dfrac{\overline{v}_1+\delta}{\overline{v}_2+\delta}<\dfrac{1}{1-(\overline{v}_2+\delta)}$, then $\mathbb{F}$ is empty. Therefore, we consider the case $\dfrac{\overline{v}_1+\delta}{\overline{v}_2+\delta}\geq\dfrac{1}{1-(\overline{v}_2+\delta)}$. If $\overline{v}_2>0$, then there is $\delta_0>0$ such that for any $\delta\in(0,\delta_0]$, one has   $$\dfrac{v_\beta+\delta}{\overline{v}_2+\delta}\geq\dfrac{\overline{v}_1+\delta}{\overline{v}_2+\delta}\geq\dfrac{1}{1-(\overline{v}_2+\delta)}>1.$$ For any $\delta\in(0,\delta_0]$, we will construct a Cantor subset $E_\delta$ of $\mathbb{F}$. Let $$n'_k=\left\lfloor\left(\dfrac{v_\beta+\delta}{\overline{v}_2+\delta}\right)^k\right\rfloor,\quad m'_k=\lfloor(1+v_\beta+\delta)n'_k\rfloor,\quad k=1,2,\cdots$$ If $\overline{v}_2=0$, let $$n'_k=k^k,\quad m'_k=\lfloor(1+ v_\beta+\delta)n'_k\rfloor,\quad k=1,2,\cdots$$ Making an adjustment, we can choose two subsequences $\{n_k\}$ and $\{m_k\}$ with $n_k<m_k<n_{k+1}$ for every $k\geq1$ such that $\{m_k-n_k\}$ is a non-decreasing sequence and 
\begin{equation}\label{lowle}
   \lim_{k\rightarrow\infty}\dfrac{m_k-n_k}{n_k}=v_\beta+\delta,\qquad \lim_{k\rightarrow\infty}\dfrac{m_k-n_k}{n_{k+1}}=\overline{v}_2+\delta.
\end{equation}
   
   Consider the set of real numbers $x\in[0,1)$ whose $\beta$-expansion $$x=\dfrac{a_1}{\beta}+\dfrac{a_2}{\beta^2}+\cdots+\dfrac{a_n}{\beta^n}+\cdots,$$ satisfies that for all $k\geq1$, $$a_{n_k}=1,~a_{n_k+1}=\cdots=a_{m_k-1}=0,~a_{m_k}=1,$$ $$ a_{m_k+(m_k-n_k)}=a_{m_k+2(m_k-n_k)}=\cdots=a_{m_k+t_k(m_k-n_k)}=1,$$ where $t_k$ is the largest integer such that $m_k+t_k(m_k-n_k)<n_{k+1}$. Then, $$t_k\leq\dfrac{n_{k+1}-m_k}{m_k-n_k}\leq\dfrac{2}{\overline{v}_2+\delta},$$ for $k$ large enough. Therefore, the sequence $\{t_k:k\geq1\}$ is bounded. Fix $N$, let $\beta_N$ be the real number defined by the infinite $\beta$-expansion of $1$ as equality (\ref{ED1}). We replace the digit $1$ for $a_{n_k},~a_{m_k}$ and $a_{m_k+i(m_k-n_k)}$ for any $1\leq i\leq t_k$ by the block $0^N10^N$. Fill other places by blocks belonging to $\Sigma_{\beta_N}$. Thus, we have constructed the Cantor type subset $E_\delta$. Since $\{t_k\}$ is bound, one has $$\lim_{k\rightarrow\infty}\dfrac{m_k-n_k-1+2N}{n_k+(4k-2)N+\sum_{i=1}^{k-1}2Nt_i}=\lim_{k\rightarrow\infty}\dfrac{m_k-n_k}{n_k}=v_\beta+\delta,$$ $$\lim_{k\rightarrow\infty}\dfrac{m_k-n_k-1+2N}{n_{k+1}+(4k+2)N+\sum_{i=1}^k2Nt_i}=\lim_{k\rightarrow\infty}\dfrac{m_k-n_k}{n_{k+1}}=\overline{v}_2+\delta.$$ According to the construction, the sequence $d_\beta(x)$ is in $\Sigma_{\beta_N}.$
\begin{claim}
   	\qquad $E_\delta\subseteq \mathcal{L}(\psi_1)\cap\mathcal{U}(\psi_2)$.
\end{claim}
   
\begin{proof}[Proof of Claim]
   Given $\varepsilon>0$, by (\ref{lowle}), there exists an integer $k_0$ such that $$m_k-n_k\leq(v_\beta+\delta+\varepsilon)n_k,~ m_k-n_k\leq(\overline{v}_2+\delta+\varepsilon)n_{k+1},~\text{for any $k\geq k_0$}.$$ By the definitions of $\overline{v}_1$ and $\overline{v}_2$, there is an integer $n_0$ such that $$\beta^{-n(\overline{v}_1+\delta+\varepsilon)}\leq\psi_1(n),\quad \beta^{-n(\overline{v}_2+\delta+\varepsilon)}\leq\psi_2(n),~\text{for any $n\geq n_0$}.$$
   
   Let $N_0=\max\{n_{k_0},~n_0\}$, for any $x\in E_\delta$ and any $n_k\geq N_0$, one has $$T^{n_k}_\beta x<\beta^{n_k-m_k+1}\leq\beta^{-n_k(v_\beta+\delta+\varepsilon-1/n_k)}\leq\beta^{-n_k(\overline{v}_1+\delta+\varepsilon-1/n_k)}\leq\psi_1(n_k).$$ It means $x\in\mathcal{L}(\psi_1)$. On the other hand, for $N\geq N_0$, there is an integer $i$ such that $n_{k+i}\leq N<n_{k+i+1}$. Therefore, $$T^{n_{k+i}}_\beta x<\beta^{n_{k+i}-m_{k+i}+1}\leq\beta^{-N(\overline{v}_2+\delta+\varepsilon-1/n_{k+i+1})}\leq\psi_2(N).$$ It means $x\in\mathcal{U}(\psi_2)$. Then, $x\in\mathcal{L}(\psi_1)\cap\mathcal{U}(\psi_2)$. Therefore, $$E_\delta\subseteq \mathcal{L}(\psi_1)\cap\mathcal{U}(\psi_2).$$  	
\end{proof}

   We distribute the mass uniformly when meet a block in $\Sigma_{\beta_N}$ and keep the mass when go through the positions where the digits are determined by construction of $E_\delta$. The Bernoulli measure $\mu$ on $E_\delta$ is defined as follows.
   
   If $n<n_1$, define $\mu(I_n)=1/\sharp\Sigma^n_{\beta_N}$. If $n_1\leq n\leq m_1+4N$, define $\mu(I_n)=1/\sharp\Sigma^{n_1-1}_{\beta_N}$. If there is an integer $t$ with $0\leq t\leq t_1-1$ such that $$m_1+4N+(t+1)(m_1-n_1)+2Nt<n\leq m_1+4N+(t+1)(m_1-n_1)+2N(t+1),$$ define $$\mu(I_n)=\dfrac{1}{\sharp\Sigma^{n_1-1}_{\beta_N}}\cdot\dfrac{1}{\left(\sharp\Sigma^{m_1-n_1-1}_{\beta_N}\right)^{t+1}}.$$ If there is an integer $t$ with $0\leq t\leq t_1$ such that $$m_1+4N+t(m_1-n_1)+2Nt<n\leq c,$$ where $c:=\min\{n_2+4N+2Nt_1, m_1+4N+(t+1)(m_1-n_1)+2Nt\}$, define $$\mu(I_n)=\dfrac{1}{\sharp\Sigma^{n_1-1}_{\beta_N}}\cdot\dfrac{1}{\left(\sharp\Sigma^{m_1-n_1-1}_{\beta_N}\right)^t}\cdot\dfrac{1}{\sharp\Sigma^{n-(m_1+4N+t(m_1-n_1)+2Nt)}_{\beta_N}}.$$ 
   
   For $k\geq2$, let $$l_k:=n_k+4(k-1)N+\sum^{k-1}_{i=1}2Nt_i,\quad h_k:=m_k+4kN+\sum^{k-1}_{i=1}2Nt_i,$$ $$p_k:=m_k-n_k-1,\quad q_k:=h_k+t_k(m_k-n_k)+2Nt_k.$$ If $l_k\leq n\leq h_k$, define $$\mu(I_n)=\dfrac{1}{\sharp\Sigma^{n_1-1}_{\beta_N}}\cdot\dfrac{1}{\prod^{k-1}_{i=1}\left(\sharp \Sigma^{p_i}_{\beta_N}\right)^{t_i}\cdot\left(\sharp\Sigma^{l_{i+1}-q_i-1}_{\beta_N}\right)}=\mu(I_{l_k})=\mu(I_{h_k}).$$ If there is an integer $t$ with $0\leq t\leq t_k-1$ such that $$h_k+(t+1)(m_k-n_k)+2Nt<n\leq h_k+(t+1)(m_k-n_k)+2N(t+1),$$ define $$\mu(I_n)=\mu(I_{h_k})\cdot\dfrac{1}{\left(\sharp\Sigma^{p_k}_{\beta_N}\right)^{t+1}}.$$ If there is an integer $t$ with $0\leq t\leq t_k$ such that $$h_k+t(m_k-n_k)+2Nt<n\leq\min\{l_{k+1}, h_k+(t+1)(m_k-n_k)+2Nt\},$$ define $$\mu(I_n)=\mu(I_{h_k})\cdot\dfrac{1}{\left(\sharp\Sigma^{p_k}_{\beta_N}\right)^t}\cdot\dfrac{1}{\sharp\Sigma^{n-(h_k+t(m_k-n_k)+2Nt)}_{\beta_N}}.$$ 
   
   By the construction and Proposition \ref{full}, $I_{h_k}$ is full. For calculating the local dimension of $\mu$, we discuss different cases as follows.
   
   {\bf Case $A$:} If $n=h_k$, then
\begin{eqnarray*}
   	\liminf_{k\rightarrow\infty}\dfrac{\log_\beta\mu(I_{h_k})}{\log_\beta \lvert I_{h_k}\rvert}&=&\liminf_{k\rightarrow\infty}\dfrac{n_1-1+\sum\limits_{i=1}^{k-1}\left(t_ip_i+l_{i+1}-q_i-1\right)}{h_k}\cdot\log_\beta\beta_N\\&=&\liminf_{k\rightarrow\infty}\dfrac{n_1-1+\sum\limits_{i=1}^{k-1}\left(l_{i+1}-h_i-2Nt_i-1\right)}{h_k}\cdot\log_\beta\beta_N.
\end{eqnarray*} 
   Recall that $\{t_k:k\geq1\}$ is bounded and $\{m_k:k\geq1\}$ grows exponentially fast in terms of $k$, therefore, $$\liminf_{k\rightarrow\infty}\dfrac{\log_\beta\mu(I_{h_k})}{\log_\beta \lvert I_{h_k}\rvert}=\liminf_{k\rightarrow\infty}\dfrac{\sum_{i=1}^{k-1}\left(n_{i+1}-m_i\right)}{m_k}\log_\beta\beta_N.$$ By equalities (\ref{lowle}), one has $$\lim_{k\rightarrow\infty}\dfrac{m_k}{n_k}=1+v_\beta+\delta, \lim_{k\rightarrow\infty}\dfrac{m_{k+1}}{m_k}=\dfrac{v_\beta+\delta}{\overline{v}_2+\delta},\lim_{k\rightarrow\infty}\dfrac{n_{k+1}}{m_k}=\dfrac{v_\beta+\delta}{(\overline{v}_2+\delta)(1+v_\beta+\delta)}.$$ According to Stolz-Ces\`{a}ro Theorem,
\begin{eqnarray*}
   \liminf_{k\rightarrow\infty}\dfrac{\sum\limits_{i=1}^{k-1}\left(n_{i+1}-m_i\right)}{m_k}&=&\liminf_{k\rightarrow\infty}\dfrac{n_{k+1}-m_k}{m_{k+1}-m_k}\\&=&\liminf_{k\rightarrow\infty}\dfrac{\dfrac{n_{k+1}}{m_k}-1}{\dfrac{m_{k+1}}{m_k}-1}=\dfrac{v_\beta-\overline{v}_2-(v_\beta+\delta) (\overline{v}_2+\delta)}{(1+v_\beta+\delta)(v_\beta-\overline{v}_2)}.
\end{eqnarray*} 
   Thus, $$\liminf_{k\rightarrow\infty}\dfrac{\log_\beta\mu(I_{h_k})}{\log_\beta \lvert I_{h_k}\rvert}=\dfrac{v_\beta-\overline{v}_2-(v_\beta+\delta) (\overline{v}_2+\delta)}{(1+v_\beta+\delta)(v_\beta-\overline{v}_2)}\cdot\log_\beta\beta_N.$$
   
   {\bf Case $B$:} For an integer $n$ large enough, if there is $k\geq2$ such that $l_k\leq n\leq h_k$, then $$\liminf_{k\rightarrow\infty}\dfrac{\log_\beta\mu(I_n)}{\log_\beta\lvert I_n\rvert}\geq \liminf_{k\rightarrow\infty}\dfrac{\log_\beta\mu(I_n)}{\log_\beta\lvert I_{h_k}\rvert}=\liminf_{k\rightarrow\infty}\dfrac{\log_\beta\mu(I_{h_k})}{\log_\beta\lvert I_{h_k}\rvert}.$$
   
   {\bf Case $C$:} For $n$, if there is an integer $t$ with $0\leq t\leq t_k-1$ such that $$h_k+(t+1)(m_k-n_k)+2Nt<n\leq h_k+(t+1)(m_k-n_k)+2N(t+1),$$ then one has $$\mu(I_n)\leq\mu(I_{h_k})\cdot\beta^{-(t+1)p_k}_N.$$ Since $I_{h_k}$ is full, by Proposition \ref{fullc}, $\lvert I_n\rvert=\lvert I_{h_k}\rvert\cdot\lvert I_{n-h_k}(\omega')\rvert$, where $\omega'$ is an admissible block in $\Sigma^{n-h_k}_{\beta_N}$. By Lemma \ref{length}, $$\lvert I_n\rvert\geq\lvert I_{h_k}\rvert\cdot\beta^{-(n-h_k+N)}.$$ Hence, $$\dfrac{-\log_\beta\mu(I_n)}{-\log_\beta\lvert I_n\rvert}\geq\dfrac{-\log_\beta\mu(I_{h_k})+(t+1)p_k\log_\beta\beta_N}{-\log_\beta\lvert I_{h_k}\rvert+((t+1)p_k+N(2t+1))}\geq\dfrac{-\log_\beta\mu(I_{h_k})}{-\log_\beta\lvert I_{h_k}\rvert}\cdot\varphi(N),$$ where $\varphi(N)<1$ and $\varphi(N)$ tends to $1$ as $N$ tends to infinity. If there is an integer $t$ with $0\leq t\leq t_k$ such that $$h_k+t(m_k-n_k)+2Nt<n\leq\min\{l_{k+1}, h_k+(t+1)(m_k-n_k)+2Nt\},$$ then letting $l:=n-(h_k+t(m_k-n_k)+2Nt)$, one has $$\mu(I_n)\leq\mu(I_{h_k})\cdot\beta^{-tp_k-l}_N.$$ Since $I_{h_k}$ is full, by Proposition \ref{fullc}, $\lvert I_n\rvert=\lvert I_{h_k}\rvert\cdot\lvert I_{n-h_k}(\omega')\rvert$, where $\omega'$ is an admissible block in $\Sigma^{n-h_k}_{\beta_N}$. By Lemma \ref{length}, $\lvert I_{n-h_k}(\omega')\rvert\geq\beta^{-(n-h_k+N)}.$ Therefore, $$\lvert I_n\rvert\geq\lvert I_{h_k}\rvert\cdot\beta^{-(n-h_k+N)}.$$ Hence, $$\dfrac{-\log_\beta\mu(I_n)}{-\log_\beta\lvert I_n\rvert}\geq\dfrac{-\log_\beta\mu(I_{h_k})+(tp_k+l)\log_\beta\beta_N}{-\log_\beta\lvert I_{h_k}\rvert+(tp_k+l+t+N(2t+1))}\geq\dfrac{-\log_\beta\mu(I_{h_k})}{-\log_\beta\lvert I_{h_k}\rvert}\cdot\varphi(N).$$ 
   
   Therefore, in all cases, $$\liminf_{k\rightarrow\infty}\dfrac{\log_\beta\mu(I_n)}{\log_\beta\lvert I_n\rvert}\geq\dfrac{v_\beta-\overline{v}_2-(v_\beta+\delta) (\overline{v}_2+\delta)}{(1+v_\beta+\delta)(v_\beta-\overline{v}_2)}\cdot\log_\beta\beta_N\cdot\varphi(N).$$ Given a point $x\in E_\delta$, let $r$ be a number with $\lvert I_{n+1}(x)\rvert\leq r<\lvert I_n(x)\rvert$. We consider the ball $B(x,r)$. By Lemma \ref{length}, every $n$-th order basic interval $I_n$ satisfies $\lvert I_n\rvert\geq \beta^{-(n+N)}$. Hence, the ball $B(x,r)$ interests at most $\lfloor 2\beta^N\rfloor+2$ basic intervals of order $n$. On the other hand, $$r\geq\lvert I_{n+1}(x)\rvert\geq\beta^{-(n+1+N)}=\beta^{-(1+N)}\cdot\beta^{-n}\geq\beta^{-(1+N)}\cdot\lvert I_n(x)\rvert.$$ Therefore, $$\liminf_{r\rightarrow0}\dfrac{\log_\beta\mu(B(x,r))}{\log_\beta r}=\liminf_{n\rightarrow\infty}\dfrac{\log_\beta\mu(I_n(x))}{\log_\beta\lvert I_n(x)\rvert}.$$ By the arbitrariness of $\delta\in(0,\delta_0]$, one has $$\liminf_{k\rightarrow\infty}\dfrac{\log_\beta\mu(I_n)}{\log_\beta\lvert I_n\rvert}\geq\dfrac{v_\beta-\overline{v}_2-v_\beta\cdot\overline{v}_2}{(1+v_\beta)(v_\beta-\overline{v}_2)}\cdot\log_\beta\beta_N\cdot\varphi(N).$$ Let $N$ tend to infinity, by Mass Distribution Principle \cite[pp.60]{FK90}, one has 
\begin{equation}\label{LBE}
   {\rm dim}_H\left(\mathcal{L}(\psi_1)\cap\mathcal{U}(\psi_2)\right)\geq \dfrac{v_\beta-\overline{v}_2-v_\beta\overline{v}_2}{(1+v_\beta)(v_\beta-\overline{v}_2)}.
\end{equation}
   Regarding the right side as a function of $v_\beta$ with $v_\beta\geq\underline{v}_2/(1-\underline{v}_2)$, if $\overline{v}_1/(2+\overline{v}_1)<\overline{v}_2$, then the maximum is attained for $v_\beta=2\overline{v}_2/(1-\overline{v}_2)$. Therefore, $${\rm dim}_H \left(\mathcal{L}(\psi_1)\cap\mathcal{U}(\psi_2)\right)\geq\left(\dfrac{1-\overline{v}_2}{1+\overline{v}_2}\right)^2.$$ If $\overline{v}_2\leq\overline{v}_1/(2+\overline{v}_1)$, then the maximum is attained for $v_\beta=\overline{v}_1$. Thus, $${\rm dim}_H\left( \mathcal{L}(\psi_1)\cap\mathcal{U}(\psi_2)\right)\geq \dfrac{\overline{v}_1-\overline{v}_2-\overline{v}_1\cdot\overline{v}_2}{(1+\overline{v}_1)(\overline{v}_1-\overline{v}_2)}.$$                 
\end{proof}

\begin{proof}[Proof of Theorem \ref{Nonspecial}]
  If $\underline{v}_2>1$, by Proposition \ref{Thm3.2}, $\mathcal{L}(\psi_1)\cap\mathcal{U}(\psi_2)$ is countable. If $\underline{v}_1/(2+\underline{v}_1)\leq\underline{v}_2\leq1<\overline{v}_2$, by Proposition \ref{Thm3.1}, $${\rm dim}_H \left(\mathcal{L}(\psi_1)\cap\mathcal{U}(\psi_2)\right)\leq\dfrac{1}{1+\overline{v}_2}.$$ By Proposition \ref{Thm3.2} and the definition of Hausdorff dimension, we have $$0\leq{\rm dim}_H \left(\mathcal{L}(\psi_1)\cap\mathcal{U}(\psi_2)\right)\leq\min\left\{\dfrac{1}{1+\overline{v}_2},~\left(\dfrac{1-\underline{v}_2}{1+\underline{v}_2}\right)^2\right\}.$$
   
  If $\overline{v}_1/(2+\overline{v}_1)<\underline{v}_2\leq\overline{v}_2\leq1$ and $\overline{v}_1/(2+\overline{v}_1)<\overline{v}_2$, by Propositions \ref{Thm3.1} and \ref{Thm3.2}, we also have $${\rm dim}_H \left(\mathcal{L}(\psi_1)\cap\mathcal{U}(\psi_2)\right)\leq\min\left\{\dfrac{1}{1+\overline{v}_2},~\left(\dfrac{1-\underline{v}_2}{1+\underline{v}_2}\right)^2\right\}.$$ Since $\overline{v}_1/(2+\overline{v}_1)<\overline{v}_2$,  according to Propositions \ref{Thm3.3}, $$\left(\dfrac{1-\overline{v}_2}{1+\overline{v}_2}\right)^2\leq{\rm dim}_H \left(\mathcal{L}(\psi_1)\cap\mathcal{U}(\psi_2)\right).$$ Then, $$\left(\dfrac{1-\overline{v}_2}{1+\overline{v}_2}\right)^2\leq{\rm dim}_H \left(\mathcal{L}(\psi_1)\cap\mathcal{U}(\psi_2)\right)\leq\min\left\{\dfrac{1}{1+\overline{v}_2},~ \left(\dfrac{1-\underline{v}_2}{1+\underline{v}_2}\right)^2\right\}.$$ 
 
  If $\underline{v}_2\leq\underline{v}_1/(2+\underline{v}_1)$ and $\overline{v}_1/(2+\overline{v}_1)<\overline{v}_2\leq 1$, by Propositions \ref{Thm3.1}, \ref{Thm3.2} and \ref{Thm3.3}, $$\left(\dfrac{1-\overline{v}_2}{1+\overline{v}_2}\right)^2\leq{\rm dim}_H\left(\mathcal{L}(\psi_1)\cap\mathcal{U}(\psi_2)\right)\leq \dfrac{\underline{v}_1-\underline{v}_2-\underline{v}_1\cdot\underline{v}_2}{(1+\underline{v}_1)(\underline{v}_1-\underline{v}_2)}.$$
   
  If $\underline{v}_1/(2+\underline{v}_1)<\underline{v}_2\leq\overline{v}_2\leq\overline{v}_1/(2+\overline{v}_1)$, combining Proposition \ref{Thm3.1} with Proposition \ref{Thm3.2}, one has $${\rm dim}_H\left(\mathcal{L}(\psi_1)\cap\mathcal{U}(\psi_2)\right)\leq\min\left\{\dfrac{1}{1+\overline{v}_2},~\left(\dfrac{1-\underline{v}_2}{1+\underline{v}_2}\right)^2\right\}.$$ According to Proposition \ref{Thm3.3}, we have $${\rm dim}_H \left(\mathcal{L}(\psi_1)\cap\mathcal{U}(\psi_2)\right)\geq\dfrac{\overline{v}_1-\overline{v}_2-\overline{v}_1\cdot\overline{v}_2}{(1+\overline{v}_1)(\overline{v}_1-\overline{v}_2)}.$$ Therefore, $$\dfrac{\overline{v}_1-\overline{v}_2-\overline{v}_1\cdot\overline{v}_2}{(1+\overline{v}_1)(\overline{v}_1-\overline{v}_2)}\leq{\rm dim}_H \left(\mathcal{L}(\psi_1)\cap\mathcal{U}(\psi_2)\right)\leq\min\left\{\dfrac{1}{1+\overline{v}_2},~\left(\dfrac{1-\underline{v}_2}{1+\underline{v}_2}\right)^2\right\}.$$ 
    
  If $\underline{v}_2\leq\underline{v}_1/(2+\underline{v}_1)$ and $\overline{v}_2\leq\overline{v}_1/(2+\overline{v}_1)$, by Propositions \ref{Thm3.1},~\ref{Thm3.2} and \ref{Thm3.3}, $$\dfrac{\overline{v}_1-\overline{v}_2-\overline{v}_1\cdot\overline{v}_2}{(1+\overline{v}_1)(\overline{v}_1-\overline{v}_2)}\leq{\rm dim}_H\left(\mathcal{L}(\psi_1)\cap\mathcal{U}(\psi_2)\right)\leq\dfrac{\underline{v}_1-\underline{v}_2-\underline{v}_1\cdot\underline{v}_2}{(1+\underline{v}_1)(\underline{v}_1-\underline{v}_2)}.$$   
\end{proof}

\section{Proofs of Theorems \ref{Uniform} and \ref{Belong}}\label{sec1}
   
   In this section, we will give the proofs of Theorems \ref{Uniform} and \ref{Belong}. 
\begin{proof}[Proof of Theorem \ref{Uniform}]
   By Lemma \ref{SET}, one has $$\mathcal{L}(\psi_1)\cap\mathcal{U}(\psi_2)\subseteq\{x\in[0,1]:\nu_\beta(x)\geq\underline{v}_1\}\cap\{x\in[0,1]:\hat{\nu}_\beta(x)\geq\underline{v}_2\}.$$ Replace the role of $\underline{v}_1$ by $v_\beta$, for $L$ large enough, we consider the set $$\mathbb{D}:=\{x\in[0,1]:v_\beta\leq\nu_\beta(x)< v_\beta+1/L\}\cap\{x\in[0,1]:\hat{\nu}_\beta(x)\geq\underline{v}_2\}.$$ By the similar discussions in Proposition \ref{Thm3.2}, if $\underline{v}_2>1$, then $\underline{v}_2\geq (v_\beta+1/L)/(1+v_\beta+1/L)$ for any $v_\beta$. Therefore, $\mathbb{D}$ is empty. Thus, $$\mathcal{U}(\psi_2)=\{x\in[0,1]:\hat{\nu}_\beta(x)=+\infty\}.$$ By Proposition \ref{Thm3.2} and Lemma \ref{infinite}, $\mathcal{U}(\psi_2)$ is countable.
   
   If $\underline{v}_2=1$, then $v_\beta=+\infty$. By {\bf Theorem SW}, ${\rm dim}_H \left(\mathbb{D}\right)=0$. If $\underline{v}_2<1$, then for $v_\beta\geq\underline{v}_2/(1-\underline{v}_2)$, one has $${\rm dim}_H \left(\mathbb{D}\right)\leq\dfrac{v_\beta-\underline{v}_2-v_\beta\cdot v_2}{(1+v_\beta)(v_\beta-\underline{v}_2)}+\dfrac{\underline{v}_2^2/L}{1-\underline{v}_2}.$$ Since $\mathcal{L}(\psi_1)\cap\mathcal{U}(\psi_2)$ is a subset of $$\cup_{N=0}^{+\infty}\cup^L_{i=1}\{x\in[0,1]:\underline{v}_1+N+(i-1)/L\leq\nu_\beta(x)<v_1+N+i/L\}\cap\mathcal{U}(\psi_2),$$ let $L$ tend to $+\infty$, one has $${\rm dim}_H\left( \mathcal{L}(\psi_1)\cap\mathcal{U}(\psi_2)\right)\leq\sup_{v_\beta\geq \underline{v}_2/(1-\underline{v}_2)}\left\{\dfrac{v_\beta-\underline{v}_2-v_\beta\cdot \underline{v}_2}{(1+v_\beta)(v_\beta-\underline{v}_2)}\right\}.$$ Regarding the right side as a function of $v_\beta$ with $v_\beta\geq\underline{v}_2/(1-\underline{v}_2)$, we obtain the maximum at $v_\beta=2\underline{v}_2/(1-\underline{v}_2)$. Therefore, $${\rm dim}_H \left(\mathcal{U}(\psi_2)\right)\leq\left(\dfrac{1-\underline{v}_2}{1+\underline{v}_2}\right)^2.$$ Combining with Proposition \ref{Thm3.1}, one has $$0\leq{\rm dim}_H \left(\mathcal{U}(\psi_2)\right)\leq\min\left\{\dfrac{1}{1+\overline{v}_2},~\left(\dfrac{1-\underline{v}_2}{1+\underline{v}_2}\right)^2\right\}.$$
   
   If $\overline{v}_2\leq1$, then $\underline{v}_2\leq\overline{v}_2\leq1$, one also has $${\rm dim}_H \left(\mathcal{U}(\psi_2)\right)\leq\min\left\{\dfrac{1}{1+\overline{v}_2},~\left(\dfrac{1-\underline{v}_2}{1+\underline{v}_2}\right)^2\right\}.$$ To obtain the lower bound of the Hausdor dimension of $\mathcal{L}(\psi_1)\cap\mathcal{U}(\psi_2)$, we will construct a Cantor type subset $E$ of $\mathcal{L}(\psi_1)\cap\mathcal{U}(\psi_2)$. By Lemma \ref{SET}, $$\{x\in[0,1]:\nu_\beta(x)>\overline{v}_1\}\cap\{x\in[0,1]:\hat{\nu}_\beta(x)>\overline{v}_2\}\subseteq\mathcal{L}(\psi_1)\cap\mathcal{U}(\psi_2).$$ We replace the role of $\overline{v}_1$ by $v_\beta$ with $v_\beta\geq\overline{v}_2/(1-\overline{v}_2)$. Fix $\delta>0$, consider $$\{x\in[0,1]:\nu_\beta(x)= v_\beta+\delta\}\cap\{x\in[0,1]:\hat{\nu}_\beta(x)=\overline{v}_2+\delta\}.$$If $\overline{v}_2=0$, then let $$n'_k=k^k,\quad m'_k=\lfloor(1+ v_\beta+\delta)n'_k\rfloor,\quad {\rm for \ }k=1,2,\cdots$$ If $\overline{v}_2>0$, then, let $$n'_k=\left\lfloor\left(\dfrac{v_\beta+\delta}{\overline{v}_2+\delta}\right)^k\right\rfloor,\quad m'_k=\lfloor(1+v_\beta+\delta)n'_k\rfloor,\quad k=1,2,\cdots$$ Arguing as in the proof of Proposition \ref{Thm3.3}, one has $${\rm dim}_H\left(\mathcal{L}(\psi_1)\cap\mathcal{U}(\psi_2)\right)\geq \dfrac{v_\beta-\overline{v}_2-v_\beta\overline{v}_2}{(1+v_\beta)(v_\beta-\overline{v}_2)}.$$ Regarding the right side as a function of $v_\beta$ and taking $v_\beta\geq\overline{v}_2/(1-\overline{v}_2)$ into account, we obtain the maximum at $v_\beta=2\overline{v}_2/(1-\overline{v}_2)$. Then, $${\rm dim}_H\left(\mathcal{U}(\psi_2)\right)\geq\left(\dfrac{1-\overline{v}_2}{1+\overline{v}_2}\right)^2.$$  
\end{proof}   
      
\begin{proof}[Proof of Theorem \ref{Belong}]
   If $\underline{v}_1=\overline{v}_1=0$, then $$\lim_{n\rightarrow\infty}\dfrac{-\log_\beta\psi_1(n)}{n}=0.$$ If $\underline{v}_2>0$, by the definitions of $\underline{v}_2$ and $\underline{v}_1=\overline{v}_1=0$, for any $\varepsilon\in(0,\underline{v}_2/2)$, there is an integer $n_0$ such that for any $n\geq n_0$, one has $$\psi_2(n)\leq \beta^{-n(\underline{v}_2-\varepsilon)}<\beta^{-n\varepsilon}\leq \psi_1(n).$$
   
   For any $x\in\mathcal{U}(\psi_2)$, by the same argument as Proposition \ref{Pro}, we have $$\mathcal{U}(\psi_2)\subseteq\mathcal{L}(\psi_1).$$  
\end{proof}

\section{Examples}

   In this section, we will show that the upper and lower bounds of Theorems \ref{Nonspecial} and \ref{Uniform} can be all reached. Examples \ref{Exa1}, \ref{Exa2} and \ref{Exa3} explain that the upper bound estimation $\dfrac{1}{1+\overline{v}_2}$, $\left(\dfrac{1-\underline{v}_2}{1+\underline{v}_2}\right)^2$ and $\dfrac{\underline{v}_1-\underline{v}_2-\underline{v}_1\underline{v}_2}{(1+\underline{v}_1)(\underline{v}_1-\underline{v}_2)}$ are reachable, respectively.
\begin{Example}\label{Exa1}
   Let $\psi_1(n)=1$, for $n=1,2,\cdots$ and
\begin{equation*}
   \psi_2(n)=
   \begin{cases}
   \beta^{-3n}, &\text{ if $n=k^k$}\\
   1, &\text{ if $n\neq k^k$}
   \end{cases},\quad  k=1,2,\cdots
\end{equation*} 
   Then, $\underline{v}_1=\overline{v}_1=0$, $\underline{v}_2=0$, $\overline{v}_2=3$ and $${\rm dim}_H\left(\mathcal{L}(\psi_1)\cap\mathcal{U}(\psi_2)\right)=\dfrac{1}{4}=\dfrac{1}{1+\overline{v}_2}.$$	
\end{Example}    
  
\begin{proof}
   By Proposition \ref{Thm3.1}, we only need to show that $${\rm dim}_H\left(\mathcal{L}(\psi_1)\cap\mathcal{U}(\psi_2)\right)\geq1/4.$$ Now, we construct a Cantor subset $E$ of $\mathcal{L}(\psi_1)\bigcap\mathcal{U}(\psi_2)$ such that $${\rm dim}_H(E)\geq 1/4.$$ Let $n_k=k^k$ and $m_k=4k^k$, for $k=1,2,\cdots$, we construct a Cantor subset $E$ by the same way as Proposition \ref{Thm3.3}, then $${\rm dim}_H\left(\mathcal{L}(\psi_1)\cap\mathcal{U}(\psi_2)\right)\geq1/4=\dfrac{1}{1+\overline{v}_2}.$$ 	
\end{proof}

\begin{Example}\label{Exa2}
   Let $\psi_1(n)=1$, for $n=1,2,\cdots$ and
\begin{equation*}
   \psi_2(n)=
   \begin{cases}
   \beta^{-2n}, &\text{if $n=4^k$}\\
   \beta^{-n/2}, &\text{if $n\neq 4^k$}
   \end{cases},\quad k=1,2,\cdots
\end{equation*} 
   Then, $\underline{v}_1=\overline{v}=0$, $\underline{v}_2=1/2$, $\overline{v}_2=2$ and $${\rm dim}_H\left(\mathcal{L}(\psi_1)\cap\mathcal{U}(\psi_2)\right)=\dfrac{1}{9}=\left(\dfrac{1-\underline{v}_2}{1+\underline{v}_2}\right)^2.$$
\end{Example}   

\begin{proof}
   Since $\underline{v}_1/(2+\underline{v}_1)<\underline{v}_2$, by Proposition \ref{Thm3.2}, the proof is completed by showing $${\rm dim}_H\left(\mathcal{L}(\psi_1)\cap\mathcal{U}(\psi_2)\right)\geq\dfrac{1}{9}=\left(\dfrac{1-\underline{v}_2}{1+\underline{v}_2}\right)^2.$$ Let $n_k=4^k$ and $m_k=3\cdot 4^k$, for $k=1,2,\cdots$, we construct a Cantor subset $E$ by the same way as Proposition \ref{Thm3.3}, then $${\rm dim}_H\left(\mathcal{L}(\psi_1)\cap\mathcal{U}(\psi_2)\right)\geq\dfrac{1}{9}=\left(\dfrac{1-\underline{v}_2}{1+\underline{v}_2}\right)^2.$$     	
\end{proof}  
  
\begin{Example}\label{Exa3}
   For $k=1,2,\cdots$, let
\begin{equation*}
   \psi_1(n)=
   \begin{cases}
   \beta^{-n/2}, & \text{ if $n=3^k$}\\
   \beta^{-n}, & \text{ if $n\neq 3^k$}
   \end{cases}, \ \
   \psi_2(n)=
   \begin{cases}
   \beta^{-n/2}, &\text{ if $n=3^k$}\\
   \beta^{-n/6,} &\text{ if $n\neq 3^k$}.
   \end{cases}
\end{equation*} 
   Then, $\underline{v}_1=1/2$, $\overline{v}_1=1$, $\underline{v}_2=1/6$, $\overline{v}_2=1/2$ and $${\rm dim}_H\left(\mathcal{L}(\psi_1)\cap\mathcal{U}(\psi_2)\right)=\dfrac{1}{2}=\dfrac{\underline{v}_1-\underline{v}_2-\underline{v}_1\underline{v}_2}{(1+\underline{v}_1)(\underline{v}_1-\underline{v}_2)}.$$
\end{Example}

\begin{proof}
   Since $\underline{v}_2\leq\underline{v}_1/(2+\underline{v}_1)$, by Proposition \ref{Thm3.2}, one has $${\rm dim}_H\left(\mathcal{L}(\psi_1)\cap\mathcal{U}(\psi_2)\right)\leq \dfrac{\underline{v}_1-\underline{v}_2-v_1\cdot\underline{v}_2}{(1+\underline{v}_1)(\underline{v}_1-\underline{v}_2)}=\dfrac{1}{2}.$$ It suffices to show ${\rm dim}_H\left(\mathcal{L}(\psi_1)\cap\mathcal{U}(\psi_2)\right)\geq1/2$. Let $n_k=4^k$ and $m_k=3\cdot 4^k$, for $k=1,2,\cdots$, we construct a Cantor subset $E$ by the same way as Proposition \ref{Thm3.3}, then $${\rm dim}_H\left(\mathcal{L}(\psi_1)\cap\mathcal{U}(\psi_2)\right)\geq\dfrac{1}{2}=\dfrac{\underline{v}_1-\underline{v}_2-\underline{v}_1\underline{v}_2}{(1+\underline{v}_1)(\underline{v}_1-\underline{v}_2)}.$$          
\end{proof}

   Examples \ref{Exa4} and \ref{Exa5} explain that the lower bound estimation $0$ and $\left(\dfrac{1-\overline{v}_2}{1+\overline{v}_2}\right)^2$ are reachable, respectively.
\begin{Example}\label{Exa4}
   Let $\psi_1(n)=1$ for $n=1,2,\cdots$ and
\begin{equation*}
   \psi_2(n)=
   \begin{cases}
   \beta^{-3n}, &\text{ if $n=4^k$}\\
   \beta^{-n}, &\text{ if $n\neq 4^k$}
   \end{cases}, \quad k=1,2,\cdots
\end{equation*} 
   Then, $\underline{v}_1=\overline{v}_1=0$, $\underline{v}_2=1$, $\overline{v}_2=3$ and $${\rm dim}_H\left(\mathcal{L}(\psi_1)\cap\mathcal{U}(\psi_2)\right)=0.$$	
\end{Example}  
  
\begin{proof}
   It remains to prove ${\rm dim}_H\left(\mathcal{L}(\psi_1)\cap\mathcal{U}(\psi_2)\right)\leq0$. In fact, by Proposition \ref{Thm3.2}, one has $${\rm dim}_H\left(\mathcal{L}(\psi_1)\cap\mathcal{U}(\psi_2)\right)\leq\left(\dfrac{1-\underline{v}_2}{1+\underline{v}_2}\right)^2=\left(\dfrac{1-1}{1+1}\right)^2=0.$$ 
\end{proof}
  
\begin{Example}\label{Exa5}For $k=1,2,\cdots$, let
\begin{equation*}
\psi_1(n)=
\begin{cases}
\beta^{-3n}, & \text{ if $n=2k+1$}\\
\beta^{-10n/3}, & \text{ if $n=2k$}
\end{cases}, \ \
\psi_2(n)=
\begin{cases}
\beta^{-21n/32}, &\text{ if $n=2k+1$}\\
\beta^{-2n/3}, &\text{ if $n=2k$}. 
\end{cases}
\end{equation*} Then, $\underline{v}_1=3$, $\overline{v}_1=10/3$, $\underline{v}_2=21/32$ and $\overline{v}_2=2/3$. One has $${\rm dim}_H\left(\mathcal{L}(\psi_1)\cap\mathcal{U}(\psi_2)\right)=\dfrac{1}{25}=\left(\dfrac{1-\overline{v}_2}{1+\overline{v}_2}\right)^2.$$ 	
\end{Example}  
  
\begin{proof}
   Since $\underline{v}_1/(2+\underline{v}_1)<\overline{v}_2$, by Proposition \ref{Thm3.3}, the proof is completed by showing $${\rm dim}_H\left(\mathcal{L}(\psi_1)\cap\mathcal{U}(\psi_2)\right)\leq\dfrac{1}{25}=\left(\dfrac{1-\overline{v}_2}{1+\overline{v}_2}\right)^2.$$ In fact, since $\underline{v}_2\leq\underline{v}_1/(2+\underline{v}_1)$, by Proposition \ref{Thm3.2}, we have $${\rm dim}_H\left(\mathcal{L}(\psi_1)\cap\mathcal{U}(\psi_2)\right)\leq\dfrac{1}{25}=\left(\dfrac{1-\overline{v}_2}{1+\overline{v}_2}\right)^2.$$
\end{proof}  
  
   The following two examples explain that the lower bound estimation $\dfrac{\overline{v}_1-\overline{v}_2-\overline{v}_1\overline{v}_2}{(1+\overline{v}_1)(\overline{v}_1-\overline{v}_2)}$ is reachable. We consider the cases of $\underline{v}_1=\overline{v}_1$ and $\underline{v}_2=\overline{v}_2$, respectively.   
\begin{Example}\label{Exa6}
   Let $\psi_1(n)=\beta^{-n}$ for $n=1,2,\cdots$ and
   \begin{equation*}
   \psi_2(n)=
   \begin{cases}
   1, &\text{ if $n=2k+1$}\\
   \beta^{-n/4}, &\text{ if $n=2k$}
   \end{cases},~k=1,2,\cdots
   \end{equation*} 
   Then, $\underline{v}_1=\overline{v}_1=1$, $\underline{v}_2=0$, $\overline{v}_2=1/4$ and $${\rm dim}_H\left(\mathcal{L}(\psi_1)\cap\mathcal{U}(\psi_2)\right)=\dfrac{1}{3}=\dfrac{\overline{v}_1-\overline{v}_2-\overline{v}_1\overline{v}_2}{(1+\overline{v}_1)(\overline{v}_1-\overline{v}_2)}.$$
\end{Example}  
  
\begin{proof}
   Since $\overline{v}_2\leq\overline{v}_1/(2+\overline{v}_1)$, by Proposition \ref{Thm3.3}, what is left is to show $${\rm dim}_H\left(\mathcal{L}(\psi_1)\cap\mathcal{U}(\psi_2)\right)\leq\dfrac{1}{3}=\dfrac{\overline{v}_1-\overline{v}_2-\overline{v}_1\overline{v}_2}{(1+\overline{v}_1)(\overline{v}_1-\overline{v}_2)}.$$ For any $x\in[0,1]$, denote its $\beta$-expansion by $$x=\dfrac{a_1}{\beta}+\dfrac{a_2}{\beta^2}+\cdots+\dfrac{a_n}{\beta^n}+\cdots,$$ where $a_i\in\{0,\cdots,\lceil\beta\rfloor\}$, for all $i\geq1$. Let $$a_{n'_i}>0,\quad a_{n'_i+1}=\cdots=a_{m'_i-1}=0,\quad a_{m'_i}>0.$$ If $x\in\mathcal{L}(\psi_1)\cap\mathcal{U}(\psi_2)$, since $v_1>0$, then one has $$\limsup_{i\rightarrow\infty}(m'_i-n'_i)=+\infty.$$ Arguing as in the proof of Proposotion \ref{Thm3.2}, we take the maximal subsequences $\{n_k:k\geq1\}$ and $\{m_k:k\geq1\}$ of $\left\{n'_i:i\geq 1\right\}$ and $\left\{m'_i:i\geq1\right\}$, respectively, such that the sequence $\{m_k-n_k: k\geq1\}$ is non-decreasing. Notice $\beta^{n_k-m_k}<T^{n_k}_\beta x<\beta^{n_k-m_k+1}$, one has $$\limsup_{k\rightarrow\infty}\dfrac{m_k-n_k}{n_k}=1.$$ Since $x\in\mathcal{U}(\psi_2)$, there is an integer $k_0$ such that for any $k\geq k_0$, one has $m_k-n_k\geq\lfloor n_{k+1}/4\rfloor$. If not, for any $j\geq1$, there is an integer $k_j$ such that $m_{k_j}-n_{k_j}< n_{k_{j+1}}/4$. Since one of $n_{k_{j+1}}$ and $n_{k_{j+1}}+1$ is even, denote it by $l_{k_{j+1}}$, for any integer $n\in[1,l_{k_{j+1}}]$, one has $$T^n_\beta x>\beta^{n_{k_j}-m_{k_j}}>\beta^{-l_{k_{j+1}}/4}=\psi_2(l_{k_{j+1}}).$$ It contradicts $x\in\mathcal{U}(\psi_2)$.
   
   Choose the subsequence $\{n_{k_i}:i\geq1\}$ and $\{m_{k_i}:i\geq1\}$ of $\{n_k:k\geq1\}$ and $\{m_k:k\geq1\}$, respectively, such that $$\lim_{i\rightarrow\infty}\dfrac{m_{k_i}-n_{k_i}}{n_{k_i}}=1.$$ For simplicity, let $\{n_k:k\geq1\}$ and $\{m_k:k\geq1\}$ stand for $\{n_{k_i}:i\geq1\}$ and $\{m_{k_i}:i\geq1\}$, respectively. For any $\varepsilon>0$, there is an integer $k'$ such that for any $k\geq k'$, one has $$(1-\varepsilon)n_k\leq m_k-n_k\leq(1+\varepsilon)n_k,\quad m_k-n_k\geq\dfrac{n_{k+1}}{4}-2.$$ Arguing as in the proof of Proposition \ref{Thm3.2}, one has $${\rm dim}_H\left(\mathcal{L}(\psi_1)\cap\mathcal{U}(\psi_2)\right)\leq\dfrac{1}{3}=\dfrac{\overline{v}_1-\overline{v}_2-\overline{v}_1\overline{v}_2}{(1+\overline{v}_1)(\overline{v}_1-\overline{v}_2)}.$$ 
\end{proof} 

\begin{Example}\label{Exa7}
   Let 
\begin{equation*}
   \psi_1(n)=
   \begin{cases}
   \beta^{-n/3}, &\text{ if $n=2k+1$}\\
   \beta^{-2n/3}, &\text{if $n=2k$}
   \end{cases},~k=1,2,\cdots
\end{equation*} 
   and $\psi_2(n)=\beta^{-2n/11}$ for $n=1,2,\cdots$. Then, $\underline{v}_1=1/3$, $\overline{v}_1=2/3$, $\underline{v}_2=\overline{v}_2=2/11$ and $${\rm dim}_H\left(\mathcal{L}(\psi_1)\cap\mathcal{U}(\psi_2)\right)=\dfrac{9}{20}=\dfrac{\overline{v}_1-\overline{v}_2-\overline{v}_1\overline{v}_2}{(1+\overline{v}_1)(\overline{v}_1-\overline{v}_2)}.$$
\end{Example}

\begin{proof}
   Since $\overline{v}_2\leq\overline{v}_1/(2+\overline{v}_1)$, by Proposition \ref{Thm3.3}, what is left is to show $${\rm dim}_H\left(\mathcal{L}(\psi_1)\cap\mathcal{U}(\psi_2)\right)\leq\dfrac{9}{20}=\dfrac{\overline{v}_1-\overline{v}_2-\overline{v}_1\overline{v}_2}{(1+\overline{v}_1)(\overline{v}_1-\overline{v}_2)}.$$ In fact, since $\underline{v}_2=\dfrac{2}{11}\leq\dfrac{1}{7}=\underline{v}_1/(2+\underline{v}_1)$, by Proposition \ref{Thm3.2}, $${\rm dim}_H\left(\mathcal{L}(\psi_1)\cap\mathcal{U}(\psi_2)\right)\leq\dfrac{9}{20}=\dfrac{\overline{v}_1-\overline{v}_2-\overline{v}_1\overline{v}_2}{(1+\overline{v}_1)(\overline{v}_1-\overline{v}_2)}.$$ 
\end{proof}

\addcontentsline{toc}{section}{Reference}
\bibliographystyle{amsplain}

\end{document}